\newcommand \bl{\bar{L}}
\newcommand \E{\mathbb{E}}
\newcommand \PP{\mathbb{P}}
\newcommand \convp{\overset{p}{\to}}
\newcommand \convas{\overset{a.s.}{\to}}
\newtheorem{theorem}{Theorem}
\newtheorem{lemma}{Lemma}
\newtheorem{prop}{Proposition}
\newtheorem{cor}{Corollary}
\numberwithin{equation}{section}
\theoremstyle{plain}
\title{Projective, Sparse, and Learnable Latent Position Network Models}
\author{Neil A. Spencer and Cosma Rohilla Shalizi}
\affil{University of Connecticut and Carnegie Mellon University}
\date{}
\begin{document}
\maketitle

\vspace{-1cm}
\begin{abstract}

When modeling network data using a latent position model, it is typical to assume that the nodes' positions are independently and identically distributed. However, this assumption implies the average node degree grows linearly with the number of nodes, which is inappropriate when the graph is thought to be sparse. We propose an alternative assumption---that the latent positions are generated according to a Poisson point process---and show that it is compatible with various levels of sparsity. Unlike other notions of sparse latent position models in the literature, our framework also defines a projective sequence of probability models, thus ensuring consistency of statistical inference across networks of different sizes. We establish conditions for consistent estimation of the latent positions, and compare our results to existing frameworks for modeling sparse networks. 
\end{abstract}

\section{Introduction}

Network data consist of relational information between entities, such as friendships between people or interactions between cell proteins.
Often, these data take the form of binary measurements on dyads, indicating the presence or absence of a relationship between entities. 
Such network data can be modeled as a stochastic graph, with each individual dyad being a random edge. Stochastic graph models have been an active area of research for over fifty years across physics, sociology, mathematics, statistics, computer science, and other disciplines \citep{MEJN-on-network-structure-and-function}.

Many leading stochastic graph models assume that the inhomogeneity in
connection patterns across nodes is explained by node-level latent variables. The most
tractable version of this assumption is that the dyads are conditionally independent given the latent variables. In this article, we focus on a subclass of these conditionally independent dyad models---the distance-based latent position network model (LPM) of \citet{Hoff-Raftery-Handcock}. 

In LPMs, each node is assumed to have a latent position in a continuous space. The edges follow independent Bernoulli distributions with probabilities given by a decreasing function of the distance between the nodes' latent positions. By the triangle inequality, LPMs exhibit edge transitivity; friends of friends are more likely to be friends. When the latent space is assumed to be $\mathbb{R}^2$ or $\mathbb{R}^3$, the inferred latent positions can provide an embedding with which to visualize and interpret the network.

Recently, there has been an effort to classify stochastic graph models into general
unified frameworks. One notable success story has been that of the graphon for
exchangeable networks \citep{Diaconis-Janson-graph-limits}. The graphon characterizes
 all stochastic graphs invariant under isomorphism as latent variable models. LPMs can be placed within the graphon
 framework by assuming the latent positions are random effects drawn independently from the same
(possibly unknown) probability distribution. However, graphons can be inappropriate for some modeling tasks, due to their asymptotic properties. 

The typical asymptotic regime for statistical theory of network models considers the number of nodes growing to infinity in a single graph. Implicitly, this approach requires the network model to define a distribution over a sequence of increasingly sized graphs. There are several natural questions to ask about this sequence. Prominent questions include:

\begin{enumerate}
 \item At what rate does the number of edges in these graphs grow? 
\item Is the model's behavior consistent across networks of different sizes? 
\item Can one eventually learn the model's parameters as the graph grows? 
\end{enumerate}

For all non-trivial\footnote{The only exception is an empty graph, for which all edges are absent with probability one. }
 models falling within the graphon framework, the answer to question 1 is the same; the expected number of edges grows quadratically with the number of nodes \citep{Orbanz-Roy-Bayesian}. Such sequences of graphs---in which the average degree grows linearly---are called \emph{dense}. In contrast, many real-world networks are thought to have sub-linear average degree growth. This property is known as \emph{sparsity} \citep[Chapter 6.9]{MEJN-on-networks}). 
 
 For sparse graphs, graphon models are unsuitable. Accordingly, recent years have seen an effort to develop sparse graph models that preserve the advantages of graphons. In particular, the sparse graphon framework \citep{Bollobas-Janson-Riordan-Phase, Borgs-Chayes-Zhao-sparse-graph-convergence} and the graphex framework \citep{Caron-Fox-sparse-graphs, Veitch-Roy-graphex-class, Borgs-Chayes-Holden-sparse-exchangeable} both provide straightforward ways to modify network models from the dense regime to accommodate sparsity. 

In this article, we add to the sparse graph literature by formulating a new sparse LPM. We target three criteria: \emph{sparsity} (\S \ref{sec:sparsity}), \emph{projectivity} (\S \ref{sec:projectivity}) and \emph{learnablity} (\S \ref{learnabilitypre}). Projectivity of a model ensures consistency of the distributions it assigns to graphs of different sizes, and learnability ensures consistent estimation of the latent positions as the number of nodes grows. 

As we outline in Section~\ref{sec:remarks}, the existing methods for sparsifying graphons of \citet{Borgs-Chayes-Zhao-sparse-graph-convergence} and \citet{Veitch-Roy-graphex-class} do not satisfy these criteria; they either violate projectivity or make it difficult to establish learnability. We thus take a more specialized approach to develop our sparse LPMs, turning to non-exchangeable network models for inspiration. Specifically, our new LPM framework extends the Poisson random connection model \citep{Meester-Roy-continuum-percolation}---a specialized LPM framework in which the nodes' latent positions are generated according to a Poisson process. We modify the observation window approach proposed by \citet{Krioukov-Ostilli-Duality-Networks} to allow our LPMs to exhibit arbitrary levels of sparsity without sacrificing projectivity. 

To obtain learnability results for our LPM framework, we develop and modify a combination of results related to low rank matrix estimation \citep{Davenport-et-al-1-bit}, the Davis-Kahan Theorem \citep{Yu-Wang-Samworth-davis-kahan}, and eigenvalues of random Euclidean distance matrices. Our proof strategy culminates in a concentration inequality for a restricted maximum likelihood estimator of the latent positions that applies to wide a variety of LPMs, providing a straightforward sufficient conditions for LPM learnability.

The remainder of this article is organized as follows.
Section \ref{sec:background} defines sparsity (\S \ref{sec:sparsity}) and projectivity (\S \ref{sec:projectivity}) for graph
sequences. It also defines the LPM, establishing sparsity and projectivity results for its exchangeable (\S \ref{elsm}) and random connection model (\S \ref{prc}) formulations. Section~\ref{sec:sparse-latent-space} describes our new framework for modeling
projective sparse LPMs, and includes results that demonstrate that the resultant graph sequences are projective and sparse. Section \ref{learnability} defines learnability of latent position models, and provides conditions under which sparse latent position models are learnable. Finally, Section~\ref{sec:remarks} elaborates on connections between our approach, sparse graphon-based LPMs, and the graphex framework. It also includes a discussion of the limitations of our work. All proofs are deferred to Appendix~\ref{A}. 

\section{BACKGROUND}\label{sec:background}

\subsection{Sparsity}
\label{sec:sparsity}

Let $(Y^n)_{n=1,\ldots, \infty}$ be a sequence of increasingly sized ($n \times n$) random adjacency matrices associated with a sequence of increasingly sized simple undirected random graphs (on $n$ nodes). Here, each entry $Y^n_{ij}$ indicates the presence of an edge between nodes $i$ and $j$ for a graph on $n$ nodes. 

We say the sequence of stochastic graph models defined by $(Y^n)_{n=1,\ldots, \infty}$ is \emph{sparse in expectation} if
\begin{align}
\lim_{n \rightarrow \infty} \mathbb{E}\left( \frac{\sum_{i=1}^n \sum_{j=1}^n Y^n_{ij}}{n^2}\right) = 0.
\end{align}
In other words, a sequence of graphs is sparse in expectation if the expected number of edges scales sub-quadratically in the number of nodes. 

Recall that a node's degree is defined as the number of nodes to which it is adjacent. Sparsity in expectation is equivalent to the expected average node degree growing sub-linearly. If instead the average degree grows linearly, we say the graph is \emph{dense} in expectation. 

In this article, we are also interested in distinguishing between degrees of sparsity. We say that a graph is \emph{$e(n)$-sparse in expectation} if
\begin{align}
\lim_{n \rightarrow \infty} \mathbb{E} \left(\frac{\sum_{i=1}^n \sum_{j=1}^n Y_{ij}}{e(n)}\right) = C
\end{align}
for some constant $C \in \mathbb{R}_+$. That is, the number of edges scales $\Theta(e(n))$. A dense graph could also be called $n^2$-sparse in expectation.

Note that sparsity and $e(n)$-sparsity are asymptotic properties of graphs, defined for increasing sequences of graphs but not for finite realizations. These definitions differ from the informal use of ``sparse graph'' to refer to a single graph with few edges. It also differs from the definition of sparsity for weighted graphs used in \citet{Rastelli-sparse-weighted-networks}. In practice, we typically observe a single finite realization of a graph, but the notion of sparsity remains useful because many network models naturally define a sequence of networks.

\subsection{Projectivity}
\label{sec:projectivity}

Let $(\mathbb{P}^n)_{n=1\ldots \infty}$ denote the probability distributions corresponding to a growing sequence of random adjacency matrices $(Y^n)_{n=1,\ldots, \infty}$ for a sequence of graphs. We say that the sequence $(\mathbb{P}^n)_{n=1\ldots \infty}$ is \emph{projective} if, for any $n_1 < n_2$, the distribution over adjacency matrices induced by $\mathbb{P}^{n_1}$ is equivalent to the distribution over $n_1 \times n_1$ sub-matrices induced by the leading $n_1$ rows and columns of an adjacency matrix following $\mathbb{P}^{n_2}$. That is, $(\mathbb{P}^n)_{n=1,\ldots, \infty}$ is projective if for any $y \in \left\{0,1\right\}^{n_1 \times n_1}$,
\begin{align}
\mathbb{P}^{n_1}(Y^{n_1} = y) &= \mathbb{P}^{n_2}(Y^{n_2} \in X),
\end{align}
where $X = \left\{x \in \left\{0,1\right\}^{n_2 \times n_2}: x_{ij} = y_{ij} \text{ if } 1 \leq i,j \leq n_1 \right\}$.

Projectivity ensures a notion of consistency between networks of different sizes, provided that they are generated from the same model class. This property is particularly useful for problems of superpopulation inference \citep{DAmour-Airoldi-misspecification}, such as testing whether separate networks were drawn from the same population, predicting the values of dyads associated with a new node, or pooling together estimates from separate networks in a hierarchical model. Such problems require that parameter inferences be comparable across differently sized graphs. Without projectivity, it is unclear how to make comparisons without additional assumptions.

Projectivity has thus received considerable attention recently in the networks literature \citep{Snijders-marginalization-for-ERGMs, your-favorite-ergm-sucks, Crane-Dempsey-Framework-Network-Modelling, Schweinberger-et-al-Finite-Population-Graphs, Kartun-Giles-et-al-Sparse-Power-Law}. Our definition of projectivity departs from others in the literature in that it depends on a specific ordering of the nodes. Other definitions require consistency under subsampling of any $n_1$ nodes, not just the first $n_1$ nodes. The two definitions coincide when exchangeability is assumed, but differ otherwise.

\subsection{Latent Position Network Models}

The notion that entities in networks possess latent positions has a long
history in the social science literature. The idea of a ``social space'' that
influences the social interactions of individuals traces back to at least the
seventeenth century \citep[p.\ 3]{Sorokin-social-mobility}. A thorough history
of the notions of social space and social distance as they pertain to social
networks is provided in \citet{McFarland-Brown-social-distance}.

In the statistical network modeling literature, assigning continuous latent
positions to nodes dates back to the 1970s, in which
multi-dimensional scaling was used to summarize similarities between nodes
in the data \citep[p.\ 385]{Wasserman-Faust}. However, it was not until
\citet{Hoff-Raftery-Handcock} that the modern notion of latent continuous positions were used to define a probabilistic model for stochastic
graphs in the statistics literature. In this article, we focus on this probabilistic formulation, with our definition of latent position
models (LPMs) following that of the distance model of \citet{Hoff-Raftery-Handcock}. 

Consider a binary graph on $n$ nodes. The LPM is characterized by each node $i$ of the network possessing a latent
position $Z_i$ in a metric space $(S,\rho)$. Conditional on these latent positions,
the edges are drawn as independent Bernoulli random variables following
\begin{align}
\mathbb{P}(Y_{ij} = 1|Z_i, Z_j) &= K(\rho(Z_i, Z_j)).
\end{align}
Here, $K:\mathbb{R}_+ \rightarrow [0,1]$ is known as the link probability function; it captures the dependency 
of edge probabilities on the latent inter-node distances. For the majority of this article, we assume $K$ is independent of $n$ (\S \ref{sparsegraphon} is an exception). Furthermore, we focus on link probability functions that smoothly decrease with distance and are integrable on the real line, such as expit($-\rho^2$), $\exp{(-\rho^2)}$ and $(1 + \rho^2)^{-1}$. Though the general formulation of the LPM in \citet{Hoff-Raftery-Handcock} allows for dyad-specific covariates to influence connectivity, our exposition assumes that no such covariates are available. We have done this for purposes of clarity; our framework does not specifically exclude them.

\subsection{Exchangeable Latent Position Network Models}\label{elsm}

Originally, \citet{Hoff-Raftery-Handcock} proposed modeling the nodes' latent positions as independent and identically distributed random effects
 drawn from a distribution $f$ of known parametric form. This approach remains popular in practice today, with $S$ assumed to be a low-dimensional Euclidean space $\mathbb{R}^d$ and $f$ typically assumed to be multivariate Gaussian or a mixture of multivariate Gaussians \citep{Hancock-Raftery-Tantrum-latent-cluster-network}.  We refer to this class of models as \emph{exchangeable LPMs} because they assume the nodes are infinitely exchangeable. Exchangeable latent position network models are projective, but must be dense in expectation.

\begin{prop}\label{exprojective}
Exchangeable latent position network models define a projective sequence of models.
\end{prop}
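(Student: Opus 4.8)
The plan is to unfold the definition of projectivity from \S\ref{sec:projectivity} and verify the identity $\mathbb{P}^{n_1}(Y^{n_1}=y) = \mathbb{P}^{n_2}(Y^{n_2}\in X)$ by direct marginalization of the generative model. Under the exchangeable LPM on $n_2$ nodes, one first draws latent positions $Z^1,\ldots,Z^{n_2}$ independently from $f$ and then draws the edges as conditionally independent Bernoulli variables. Writing the probability of the event $X$ as an integral over $S^{n_2}$ against $\prod_{i=1}^{n_2} f(z^i)$ and a sum over all adjacency matrices $x$ agreeing with $y$ on the leading $n_1\times n_1$ block, the integrand factorizes over dyads by conditional independence:
\[
\mathbb{P}^{n_2}(Y^{n_2}\in X) = \int_{S^{n_2}} \sum_{x\in X} \prod_{1\le i<j\le n_2} K(\rho(z^i,z^j))^{x_{ij}} \bigl(1-K(\rho(z^i,z^j))\bigr)^{1-x_{ij}} \prod_{i=1}^{n_2} f(z^i)\, dz.
\]

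Second, I would carry out the marginalization in two stages. In the sum over $x\in X$, the entries $x_{ij}$ with $i,j\le n_1$ are pinned to $y_{ij}$, while every entry touching a node of index exceeding $n_1$ is free; summing each such Bernoulli factor over $x_{ij}\in\{0,1\}$ contributes $K + (1-K) = 1$, so all dyads involving the extra nodes drop out. The remaining expression depends on $z^{n_1+1},\ldots,z^{n_2}$ only through the densities $f(z^i)$, each of which integrates to one over $S$. After performing these integrals, the quantity reduces to an integral over $S^{n_1}$ of the $n_1$-node integrand against $\prod_{i=1}^{n_1} f(z^i)$, which is exactly $\mathbb{P}^{n_1}(Y^{n_1}=y)$, as required.

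The only technical point requiring care is the interchange of the finite sum over $x\in X$ with the integral over $S^{n_2}$; this is immediate, since the integrand is nonnegative and dominated by $\prod_{i=1}^{n_2} f(z^i)$, so Tonelli's theorem applies and every integral is finite. The argument hinges on exactly two structural features of the exchangeable LPM --- conditional independence of edges, which lets the extra dyads be summed away, and the i.i.d.\ (hence product-form) law of the latent positions, which lets the extra positions be integrated away --- and notably uses neither the monotonicity nor the integrability of $K$. I would close by remarking that this clean marginalization is precisely what breaks for the sparsified constructions discussed in \S\ref{sec:remarks}, motivating the more specialized approach taken in this paper.
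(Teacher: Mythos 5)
Your proof is correct and takes essentially the same route as the paper's: both arguments rest on the i.i.d.\ law of the latent positions and the conditional independence of dyads, so that the extra nodes' positions and the dyads touching them marginalize away, leaving the $n_1$-node model intact. Your version is in fact more explicit than the paper's (which argues at the level of equality in distribution of the first $n_1$ positions and of the dyad probabilities, and then asserts the adjacency-matrix distributions coincide), since you carry out the sum-to-one and integrate-to-one steps that justify that final assertion.
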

\begin{proof}
Provided in \S \ref{exprojectiveproof}.
\end{proof}

\begin{prop}\label{exsparse}
Exchangeable latent position network models define dense in expectation graph sequences.
\end{prop}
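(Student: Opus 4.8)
The plan is to exploit the defining feature of an exchangeable LPM: because the latent positions are i.i.d., the marginal probability of any single edge is a constant that does not depend on $n$. Density then follows immediately from the fact that there are on the order of $n^2$ dyads, each contributing this fixed probability to the expected edge count.

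First I would fix two distinct nodes $i \neq j$ and compute the marginal edge probability by conditioning on their latent positions and applying the tower property,
\begin{align}
\mathbb{E}(Y^n_{ij}) = \mathbb{E}\left[\mathbb{E}(Y^n_{ij} \mid Z^i, Z^j)\right] = \mathbb{E}\left[K(\rho(Z^i, Z^j))\right].
\end{align}
Since $Z^i$ and $Z^j$ are drawn i.i.d.\ from $f$, the pair $(Z^i, Z^j)$ shares the same joint law across all off-diagonal dyads, so this expectation reduces to the single $n$-free constant
\begin{align}
p := \int_S \int_S K(\rho(z, z')) \, df(z) \, df(z').
\end{align}
Because the graphs are simple there are no self-loops, whence $Y^n_{ii} = 0$ and only the $n(n-1)$ off-diagonal terms contribute. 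Linearity of expectation then gives $\mathbb{E}\left(\sum_{i=1}^n \sum_{j=1}^n Y^n_{ij}\right) = n(n-1)\,p$, and dividing by $n^2$ yields a limit of $p$ as $n \to \infty$.

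It remains only to check that $p > 0$, so that the limit is a strictly positive constant (density) rather than zero (sparsity); this is the one step requiring care. I would dispatch it by invoking the standing assumption that the link function $K$ is strictly positive---as it is for every example under consideration, e.g.\ $\text{expit}(-\rho)$, $\exp(-\rho^2)$, and $(1+\rho^2)^{-1}$---so that $K(\rho(z, z')) > 0$ everywhere and hence $p > 0$ since $f$ is a probability measure. I do not anticipate a genuine obstacle: the whole argument rests on the single observation that exchangeability freezes the per-dyad edge probability at a constant independent of $n$, and the only subtlety worth flagging explicitly is the positivity of $p$.
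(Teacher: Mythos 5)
Your proposal is correct and follows essentially the same argument as the paper: the tower property plus the i.i.d.\ latent positions yield a per-dyad edge probability that is a constant independent of $n$, so the expected edge count scales as $n^2$ times that constant. Your handling of the diagonal terms and the explicit positivity check on $p$ are minor refinements of the same argument (the paper dispatches positivity with the remark ``as long as the network is not empty''), not a different route.
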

\begin{proof}
Provided in \S \ref{exsparseproof}.
\end{proof}

Consequently, LPMs with exchangeable latent positions cannot be sparse. To develop sparse LPMs, we must consider alternative assumptions. 

\subsection{Poisson Random Connection Model}\label{prc}

Instead of the latent positions being generated independently from a distribution over $S$, we can treat them as drawn according to a point process over $S$. This approach---known as the random connection model---has been well-studied in the context of percolation theory \citep{Meester-Roy-continuum-percolation}. Most of this focus has been on random geometric graphs \citep{Penrose-random-geometric-graphs}, a version of a LPMs for which K is an indicator function of the distance (i.e. $K(\rho(Z_i, Z_j)) \propto I(\rho(Z_i, Z_j) < \epsilon)$). Here, we instead study the random connection model as a statistical model, focusing the case where $K$ is a smoothly decaying and integrable function. 

In particular, we consider the \emph{Poisson} random connection model \citep{Gilbert-Random-Plane, Penrose-on-continuum-percolation}, for which the point process is assumed to be a homogeneous Poisson process \citep{Kingman-Poisson} over $S \subseteq \mathbb{R}^d$. Because Poisson random connection models on finite-measure $S$ are equivalent to exchangeable LPMs, the interesting cases occur when $S$ has infinite measure, such as $\mathbb{R}^d$. In these cases, the expected number of points is almost-surely infinite, resulting in an infinite number of nodes. 

These infinite graphs can be converted into a growing sequence of finite graphs via the following procedure. Let $G$ denote an infinite graph generated according to a Poisson random connection model on $S$. Let
\begin{align}
S_1 \subset S_2 \subset \cdots \subset S_n \subset \cdots \subset S
\end{align}
 denote a nested sequence of finitely-sized \emph{observation windows} in $S$. For each $S_i$, define $G_i$ to be the subgraph of $G$ induced by keeping only those nodes with latent positions in $S_i$. Because these positions form a Poisson process, each $G_i$ consists of a Poisson distributed number of nodes with mean given by the size of $S_i$. Each $G_i$ is thus almost-surely finite, and the sequence of graphs $(G_i)_{i =1, \ldots \infty}$ contains a stochastically increasing number of nodes. 

For many choices of $S$, such as $\mathbb{R}^d$, this approach straightforwardly extends to a continuum of graphs by considering a continuum of nested observation windows of $(S_t)_{t \in \mathbb{R}_+}$. In such cases, the number of nodes follows a continuous-time stochastic process, stochastically increasing in $t$.  

As far as we are aware, the above approach was first proposed by \citet{Krioukov-Ostilli-Duality-Networks}
in the context of defining a growing sequence of geometric random graphs. Their exposition concentrated on a one-dimensional example with $S = \mathbb{R}_+$ and observation windows given by $S_t = [0,t]$. For this example, one would expect to observe $n$ nodes if $t = n$, with the total number of nodes for a given $t$ being random. As noted by \citet{Krioukov-Ostilli-Duality-Networks}, the formulation can be altered to ensure that $n$ nodes are observed by treating $n$ as fixed and treating the window size $t_n$ as the random quantity. Here, $t_n$ it equal to the smallest window width such that $[0, t_n]$ contains exactly $n$ points.  These two viewpoints (random window size and random number of nodes) are complementary for analyzing the same underlying process. 

Under the appropriate conditions, the one-dimensional Poisson random connection model results in networks which are $n$-sparse in expectation. We formalize this notion as Proposition~\ref{sparseRCM}. The finite window approach approach also defines a projective sequence of models, as stated in Proposition~\ref{projectiveRCM}.

\begin{prop}\label{sparseRCM}
  For a Poisson random connection model on $\mathbb{R}_+$ with an integrable link
  probability function, the graph sequence resulting from the finite window approach is $n$-sparse in expectation. 
  \end{prop}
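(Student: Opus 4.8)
The plan is to work in the fixed-$n$ (``random window'') viewpoint, so that the node positions are exactly the first $n$ arrival times $X_1 < X_2 < \cdots < X_n$ of the underlying homogeneous Poisson process on $\mathbb{R}_+$, say with intensity $\lambda$. The key structural fact is that the inter-arrival gaps $X_{k+1}-X_k$ are i.i.d.\ $\mathrm{Exponential}(\lambda)$, so for any $i<j$ the separation $X_j - X_i$ is a sum of $j-i$ independent exponentials, i.e.\ $X_j - X_i \sim \mathrm{Gamma}(j-i,\lambda)$. Conditioning on the positions and using $\mathbb{E}(Y_{ij}\mid X_i,X_j)=K(X_j-X_i)$, the expected number of edges reduces to a sum over gap-lengths: writing $G_m \sim \mathrm{Gamma}(m,\lambda)$ and noting there are $n-m$ pairs at separation $m$,
\[
\mathbb{E}\Big(\sum_{i<j} Y_{ij}\Big) = \sum_{m=1}^{n-1} (n-m)\,\mathbb{E}\big(K(G_m)\big).
\]

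First I would establish the identity that makes integrability exactly the right hypothesis. Summing the $\mathrm{Gamma}(m,\lambda)$ densities over all $m\ge 1$ telescopes to a constant,
\[
\sum_{m=1}^{\infty} \frac{\lambda^m x^{m-1} e^{-\lambda x}}{(m-1)!} = \lambda e^{-\lambda x}\sum_{k=0}^{\infty} \frac{(\lambda x)^k}{k!} = \lambda,
\]
so that $\sum_{m\ge 1} \mathbb{E}(K(G_m)) = \lambda \int_0^\infty K(x)\,dx =: \lambda c_K$, which is finite precisely because $K$ is integrable (and strictly positive whenever $K$ is not almost-everywhere zero). With this in hand the two bounds are immediate. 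For the upper bound I bound $n-m \le n$ to get $\mathbb{E}(\sum_{i<j}Y_{ij}) \le \lambda c_K\, n$, so the edge count is $O(n)$. For the matching lower bound I keep only the terms with $m \le n/2$, where $n-m \ge n/2$, and use that the partial sums $\sum_{m=1}^{\lfloor n/2\rfloor}\mathbb{E}(K(G_m))$ increase to $\lambda c_K$; hence $\mathbb{E}(\sum_{i<j}Y_{ij}) \ge (\lambda c_K/4)\,n$ for all large $n$, giving $\Omega(n)$. Combining, the expected total edge count is $\Theta(n)$; in fact dominated convergence applied to $n^{-1}\sum_{m}(n-m)\mathbb{E}(K(G_m)) = \sum_m (1-m/n)\,\mathbb{E}(K(G_m))$ yields the exact constant, so $\mathbb{E}(\sum_{i,j} Y^n_{ij})/n \to 2\lambda c_K$, establishing $n$-sparsity in expectation.

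I expect the main obstacle to be conceptual rather than computational: correctly recognizing that ``exactly $n$ nodes'' forces the random-window description and hence the arrival-time/Gamma representation, and then securing a genuinely linear lower bound (not merely $O(n)$), which is what rules out faster decay and requires both $c_K>0$ and the convergence of the truncated sum. As a consistency check one can redo the computation in the complementary fixed-window viewpoint: for $S_t=[0,t]$, Campbell's theorem gives
\[
\mathbb{E}(\text{edges}) = \tfrac{1}{2}\lambda^2\int_0^t\!\!\int_0^t K(|x-y|)\,dx\,dy = \lambda^2\int_0^t (t-u)\,K(u)\,du \sim \lambda^2 c_K\, t,
\]
while $\mathbb{E}(N_t)=\lambda t$, recovering the same linear rate and confirming that the two viewpoints agree.
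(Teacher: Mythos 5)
Your proof is correct, but it takes a genuinely different route from the paper. The paper dispatches this proposition in one line, as the special case $d=1$, $g(t)=t$ of its general sparsity result for rectangular LPMs (Theorem~\ref{sparsity}); that proof conditions on the arrival time $t_{n+1}$, uses exchangeability (a random permutation plus Lemma~\ref{uniformZ}) to treat the visible positions as i.i.d.\ uniform on the window, bounds the dyad probability via Lemma~\ref{distanceswitch}, and then integrates over the Gamma law of $t_{n+1}$. You instead exploit the one-dimensional structure directly: ordered arrival times, gaps $X_j-X_i\sim\mathrm{Gamma}(j-i,\lambda)$, the telescoping identity $\sum_{m\ge 1}\lambda^m x^{m-1}e^{-\lambda x}/(m-1)! = \lambda$ (so that, by Tonelli, $\sum_m \E\bigl(K(G_m)\bigr)=\lambda\int_0^\infty K = \lambda c_K<\infty$ exactly when $K$ is integrable), and then elementary bounds plus dominated convergence. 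What your approach buys: it is self-contained and more precise---you get an explicit two-sided bound and even the exact limiting constant $2\lambda c_K$ for the edge count per node, whereas the paper's Theorem~\ref{sparsity} proof writes only an upper bound on $\E(Y_{\pi(i)\pi(j)}\mid t_{n+1})$ before passing to a proportionality, so your explicit $\Omega(n)$ step (truncating at $m\le n/2$ and using convergence of the partial sums, with the caveat $c_K>0$ that the paper also needs in Lemma~\ref{distanceswitch}) is arguably more careful on the lower-bound side. What the paper's route buys is generality: the same argument covers all dimensions $d$ and all window growth rates $g(n)=n^{p/d}$, with this proposition falling out as a corollary, while your gap decomposition is specific to the ordered structure of $\mathbb{R}_+$ and would not extend as cleanly to higher-dimensional windows.
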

\begin{proof}
Provided in \S \ref{sparseRCMproof}
\end{proof}

\begin{prop}\label{projectiveRCM}
 Consider a Poisson random connection model on $\mathbb{R}_+$ with link
  probability function $K$.
  Then, the graph sequence resulting from the finite window approach is projective.
\end{prop}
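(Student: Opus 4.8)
The plan is to exhibit an explicit coupling under which the $\mathbb{P}^{n_1}$ graph coincides, almost surely, with the subgraph induced by the first $n_1$ nodes of the $\mathbb{P}^{n_2}$ graph; equality in distribution --- which is exactly the projectivity condition --- then follows immediately. I would work in the fixed-$n$ (random window) viewpoint, so that $\mathbb{P}^n$ is the law of the latent position network built on the $n$ points of $\Psi$ with the smallest positions, ordered by position as $Z^1 < Z^2 < \cdots < Z^n$. This ordering is what makes ``the leading $n_1$ rows and columns'' in the definition of projectivity correspond to ``the $n_1$ points closest to the origin.'' The two ingredients I would isolate are (i) the consistency of these ordered positions across different values of $n$, and (ii) the fact that each edge probability depends only on the positions of its two endpoints and on $K$.

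For ingredient (i), I would use that $\Psi$ is a homogeneous Poisson process on $\mathbb{R}_+$, so its points, listed in increasing order, are the arrival times of a process with i.i.d.\ Exponential inter-arrival gaps. Consequently the first $n_1$ positions $(Z^1,\ldots,Z^{n_1})$ are the partial sums of the first $n_1$ gaps and do not depend on how many further points are generated. In particular, the first $n_1$ points of the configuration underlying $\mathbb{P}^{n_2}$ are literally the same points, with the same positions, as those underlying $\mathbb{P}^{n_1}$: taking ``the $n$ smallest points'' returns a common prefix of a single ordered sequence, so enlarging $n$ from $n_1$ to $n_2$ only appends points beyond index $n_1$ and leaves the leading block untouched.

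For ingredient (ii), I would couple the edge randomness by a single array of i.i.d.\ Uniform$[0,1]$ variables $U_{ij}$ and set $Y_{ij} = \mathbf{1}\{U_{ij} < K(|Z^i - Z^j|)\}$, using that on $\mathbb{R}_+$ the metric is $\rho(Z^i,Z^j) = |Z^i - Z^j|$. Because $K$ is assumed independent of $n$, the edge between the $i$-th and $j$-th points (for $1 \le i,j \le n_1$) is computed from the same positions and the same threshold $U_{ij}$ in both constructions, so $Y_{ij}$ agrees between the $\mathbb{P}^{n_1}$ graph and the $\mathbb{P}^{n_2}$ graph on this coupling. Combining (i) and (ii), the $n_1 \times n_1$ leading submatrix of the $\mathbb{P}^{n_2}$ adjacency matrix equals the $\mathbb{P}^{n_1}$ adjacency matrix with probability one; marginalizing over the appended points and their incident edges then yields $\mathbb{P}^{n_1}(Y^{n_1} = y) = \mathbb{P}^{n_2}(Y^{n_2} \in X)$ for every $y$, as required.

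I expect the only delicate point to be articulating ingredient (i) cleanly --- namely that restricting attention to the $n$ nodes with smallest positions is consistent under increasing $n$ because of the independent-increments structure of the Poisson process. The conditional-independence structure of the LPM makes ingredient (ii) essentially automatic, paralleling the argument for the exchangeable case in Proposition~\ref{exprojective}; the substantive difference here is that the positions are ordered arrival times of a point process rather than i.i.d.\ draws, and it is the memorylessness of $\Psi$ rather than the i.i.d.\ property that supplies the needed consistency. It is also worth flagging that the assumption that $K$ does not depend on $n$ is essential: were it to vary with $n$, the same dyad would receive different edge probabilities in the two graphs and the coupling in (ii) would break.
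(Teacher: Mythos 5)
Your proof is correct, but it is packaged differently from the paper's. The paper argues purely at the level of distributions: by the interval theorem for Poisson processes, the spacings $Z^1_j,\ Z^2_j - Z^1_j,\ \ldots,\ Z^{n_1}_j - Z^{n_1-1}_j$ are iid exponential whether the graph has $j = n_1$ or $j = n_2$ nodes, so the vector of the first $n_1$ positions has the same law under both models; it then says the rest follows as in Proposition~\ref{exprojective}, i.e.\ edges are conditionally independent Bernoulli variables whose probabilities are determined by the positions through the fixed $K$, so equality of position laws forces equality of adjacency-matrix laws. You instead construct a single probability space --- one realization of $\Psi$ together with one array of uniforms $U_{ij}$ --- on which the $n_1$-node graph is almost surely equal to the leading $n_1 \times n_1$ principal submatrix of the $n_2$-node graph, and then deduce the distributional identity by marginalizing over the appended nodes. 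Both proofs rest on exactly the same two facts (the independent-increments structure of $\Psi$ and the position-only, $n$-free dependence of edges), so the mathematical content is the same, but your pathwise coupling buys something concrete: it handles the \emph{joint} law of all $\binom{n_1}{2}$ dyads and the marginalization over dyads touching nodes $n_1+1,\ldots,n_2$ automatically, whereas the paper's dyad-by-dyad phrasing (``the distribution over each dyad coincide'') is slightly loose as written --- equality of individual dyad marginals alone would not imply equality of the joint adjacency law; one needs equality of the joint position law combined with conditional independence, which is precisely what your coupling delivers for free. The paper's route, in exchange, is more economical, since it reuses the argument of Proposition~\ref{exprojective} essentially verbatim.
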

\begin{proof}
Provided in \S \ref{projectiveRCMproof}.
\end{proof}

These results indicate that the Poisson random connection model restricted to observation windows
is capable of defining a sparse graph sequences, but only for a specific sparsity level if the link probability function is integrable. 
For our new framework, we extend this observation window approach to higher dimensional $S$. By including an auxiliary dimension, we
achieve all rates between $n$-sparsity and $n^2$-sparsity (density) in expectation.

\section{NEW FRAMEWORK}
\label{sec:sparse-latent-space}

When working in a one-dimensional Euclidean latent space $S = \mathbb{R}_+$, the observation window approach for the Poisson random connection model is straightforward---the width of the window grows linearly with $t$, with nodes arriving as the window grows. As shown in Proposition~\ref{sparseRCM}, this process results in graph sequences which are $n$-sparse in expectation whenever $K$ is integrable. However, extending to $d$ dimensions ($\mathbb{R}^d$) provides freedom in defining how the window grows; different dimensions of the window can be grown at different rates.

We exploit this extra flexibility to develop our new sparse LPM model. Specifically, through the inclusion of an auxiliary dimension---an additional latent space coordinate which influences when a node becomes visible without influencing its connection probabilities---we can control the level of sparsity of the graph by trading off how quickly we grow the window in the auxiliary dimension versus the others.

In this section, we formalize this auxiliary dimension approach, showing that it allows us to develop a new LPM framework for which the level of sparsity can be controlled while maintaining projectivity. Our exposition consists of two parts: first, we present the framework in the context of a general $S$. Then, we concentrate on a special subclass with $S = \mathbb{R}^d$ for which it is possible to prove projectivity, sparsity, and establish learnability results. We refer to this special class as rectangular LPMs.

\subsection{Sparse Latent Position Model}

Our new LPM's definition follows closely with that of the Poisson random connection model restricted to finite windows: the positions in the latent space are given by a homogeneous Poisson point process, and the link probability function $K$ is independent of the number of nodes. The main departure from the random connection model is formulating $K$ such that it depends on the inter-node distance in just a subset of the dimensions---specifically all but the auxiliary dimension. The following is a set of ingredients to formulate a sparse LPM. 

\begin{itemize}
\item \emph{Position Space:} A measurable metric space $(S, \mathcal{S}, \rho)$ equipped with a Lebesgue measure $\ell_1$. 
\item \emph{Auxiliary Dimension:} The measure space ($\mathbb{R}_+$, $\mathcal{B}$, $\ell_2$) where $\mathcal{B}$ is Borel and $\ell_2$ is Lebesgue.
\item \emph{Product Space:} The product measure space $(S^*, \mathcal{S}^*, \lambda)$ on $(S \times \mathbb{R}_+, \mathcal{S} \times \mathcal{B})$, equipped with $\lambda = \ell_1 \times \ell_2$, the coupling of $\ell_1$ and $\ell_2$.
\item \emph{Continuum of observation windows:} A function $H: \mathbb{R}_+ \rightarrow \mathcal{S}^*$ such that $t_1 < t_2 \Rightarrow H(t_1) \subset H(t_2)$ and $|H(t)| = t$.
\item \emph{Link probability function}: A function $K : \mathbb{R}_+ \rightarrow [0,1]$.
\end{itemize}

Jointly, we say the triple $((S,\mathcal{S}, \rho), H, K)$ defines a stochastic graph sequence called a sparse LPM. The position space plays the role of the latent space as in traditional LPMs, with the link probability function $K$ controlling the probability of an edge given the corresponding latent distance. The auxiliary dimension plays no role in connection probabilities. Instead, a node's auxiliary coordinate---in conjunction with its
latent position and the continuum of observation windows---determines when it appears. 

Specifically, a node with position $(Z,r)$ is observable at \emph{time}
$t \in \mathbb{R}_+$ if and only if $(Z,r) \in H(t)$. Here, time need not correspond to
physical time; it is merely an index for a continuum of graphs as in the case for the Poisson random connection model. We refer to
$t_i$---defined as the smallest $t \in\mathbb{R}_+$ for which $(Z_i,r_i) \in H(t)$---as the arrival time of
the $i$th node where $(Z_i,r_i)$ are the corresponding latent position and auxiliary value for
node $i$.

\begin{figure}[!h]
\centering
\subfloat[A realization of a point process on the product space. Square observation windows $H(t)$ for $t = 4,8,16$ are depicted in green, red, and purple, respectively. The points are coloured according to the first observation window for which they are observable.]{\includegraphics[width= 2.0 in]{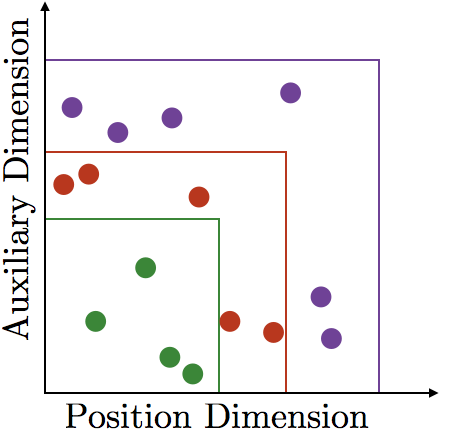}} \hspace{0.25 in}
\subfloat[Latent position graphs corresponding to the three observation windows depicted in Figure~1(a). The link probability function used is a decreasing function of distance in the position dimension.]{\includegraphics[width= 2.5 in]{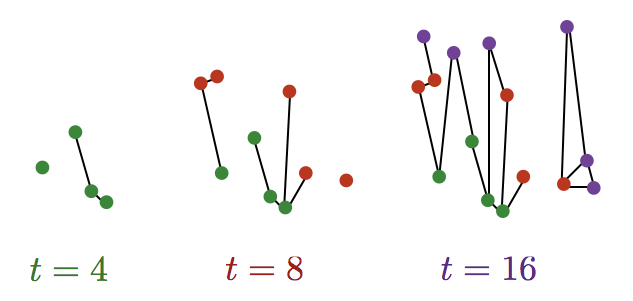}}
\caption{An example of a point process and observation windows which generate a sequence of sparse latent position graphs}
\end{figure}

Considered jointly, the coordinates defined by the latent positions and auxiliary positions
assigned to nodes can be viewed as a point process over
$S \times \mathbb{R}_+$. As in the Poisson random connection model, we assume this
point process is a unit-rate Poisson. The continuum of observation windows
$H(t)$ controls the portion of the point process which is observed at time
$t$. Since the size of $H$ is increasing in $t$, this model defines a growing sequence
of graphs with the number of nodes growing stochastically in $t$ as follows.

\begin{itemize}
\item Generate a unit-rate Poisson process $\Psi$ on $(S^*, \mathcal{S}^*)$.
\item Each point $(Z, r) \in S \times \mathbb{R}_+$ in the process corresponds to a node with latent position $Z$
and auxiliary coordinate $r$.
\item For a dyad on nodes with latent positions $Z_i$ and $Z_j$, include an edge with probability
  $K(\rho(Z_i, Z_j))$.
\item At time $t$ the subgraph induced by by restricting $\Psi$ to $H(t)$ is
  visible.
\end{itemize}
A graph of size $n$ can be obtained from the above framework by choosing any $t_n$ such that
$|\Psi \cap H(t_n)| = n$. Each $t_n < t_{n + 1}$ with probability one (by Lemma~\ref{waitingtimes}).
Thus, the above generative process is well-defined for any $n$, and the nodes are well-ordered by their arrival times.

Due to its flexibility, the above framework defines a broad class of LPMs. For instance, the exchangeable LPM can be viewed as a special case of the above framework in which the observation window grows only in the auxiliary dimension. However, the full generality of this framework makes it difficult to establish general sparsity and learnability results. For this reason, we have chosen to focus on a subclass of sparse LPMs to derive our sparsity, projectivity, and learnability proofs. We refer to this class as rectangular LPMs. We have chosen this class because it allows us to emphasize the key insights in the proofs without having to do too much extra bookkeeping.

\subsection{Rectangular Latent Position Model}

For rectangular LPMs, we impose further criteria on the basic sparse LPM. The latent space is assumed to be Euclidean ($S = \mathbb{R}^d$). The continuum of observation windows $H(t)$ are defined by the nested regions
\begin{align}
H(t) &= \left[- g(t), g(t)\right]^d \times \left[0, \frac{t}{(2g(t))^d} \right] 
\end{align}
where $g(t) = t^{p/d}$ for $0 \leq p\leq 1$ controls the rate at which the observation window grows for the latent position coordinates. The growth rate in the auxiliary dimension is chosen to be $2^{-d} t^{1- p}$ to ensure that the volume of $H(t)$ is $t$. We further assume that
\begin{align}
0< \int_0^{\infty} u^{d-1} K(u) \text{d}u < \infty
\end{align}
to ensure that the average distance between a node and its neighbors remains bounded as $n$ grows. We now demonstrate the projectivity and sparsity of rectangular LPMs as Theorems~\ref{projective} and \ref{sparsity}. 


\begin{theorem}\label{projective}
Rectangular sparse latent position network models define a projective sequence of models.
\end{theorem}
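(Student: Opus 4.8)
The plan is to exhibit a single probability space on which all the finite graphs $Y^n$ are jointly realized, in such a way that $Y^{n_1}$ is literally the leading $n_1 \times n_1$ block of $Y^{n_2}$ whenever $n_1 < n_2$; distributional equality of the marginals—which is exactly the projectivity condition from \S\ref{sec:projectivity}—then follows immediately. The underlying object is the unit-rate Poisson process $\Psi$ on $(S^*, \mathcal{S}^*, \lambda)$, together with an independent $\text{Unif}[0,1]$ edge mark $U_{\{a,b\}}$ attached to each unordered pair of points $a,b \in \Psi$, the edge $\{a,b\}$ being present iff $U_{\{a,b\}} \leq K(\rho(a,b))$, where $\rho$ ignores the auxiliary coordinate. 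Every finite graph in the sequence is then a deterministic function of this single configuration $G = (\Psi, \{U_{\{a,b\}}\})$.

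First I would verify that the arrival-time order is almost surely well defined. For the rectangular windows $H(t) = [-g(t), g(t)]^d \times [0, 2^{-d} t^{1-p}]$ with $g(t) = t^{p/d}$ and $0 \leq p \leq 1$, the latent extent $g$ and the auxiliary extent $2^{-d} t^{1-p}$ are both nondecreasing in $t$, so $t_1 < t_2 \Rightarrow H(t_1) \subseteq H(t_2)$, and $|H(t)| = t$. Hence each point $(Z,r) \in \Psi$ has a well-defined arrival time $t(Z,r) = \inf\{t : (Z,r) \in H(t)\}$, and Lemma~\ref{waitingtimes} guarantees $t_n < t_{n+1}$ almost surely, so the nodes are almost surely well-ordered by arrival time and ``the first $n$ nodes'' is unambiguous. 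I would also note that for any admissible choice of $t_n$ with $|\Psi \cap H(t_n)| = n$, nestedness forces $\Psi \cap H(t_n)$ to equal precisely the set of the $n$ earliest arrivals, so the prescription ``choose any such $t_n$'' yields the same graph and the size-$n$ law is well-defined.

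Next I would identify $Y^n$ with the adjacency matrix of the subgraph of $G$ induced by the $n$ earliest arrivals, with rows and columns indexed by arrival order. The crucial observation is that the $n_1$ earliest arrivals among the $n_2$ earliest arrivals coincide with the $n_1$ earliest arrivals overall, since $n_1 < n_2$; moreover node $i$ refers to the same point of $\Psi$ in both the size-$n_1$ and size-$n_2$ constructions, and each mark $U_{\{a,b\}}$ is drawn only once. Thus, on this common space, $Y^{n_1}_{ij} = Y^{n_2}_{ij}$ for all $1 \leq i,j \leq n_1$, i.e. $Y^{n_1}$ equals the leading $n_1 \times n_1$ block of $Y^{n_2}$ pointwise. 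Consequently, for any $y \in \{0,1\}^{n_1 \times n_1}$,
\begin{align}
\mathbb{P}^{n_1}(Y^{n_1} = y) = \mathbb{P}\bigl(\{Y^{n_2}_{ij}\}_{i,j \leq n_1} = y\bigr) = \mathbb{P}^{n_2}(Y^{n_2} \in X),
\end{align}
with $X$ the cylinder set of \S\ref{sec:projectivity}, which is the definition of projectivity.

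The main obstacle is not an estimate but a bookkeeping one: justifying the middle equality, namely that the law the framework assigns to the size-$n$ graph via the random window genuinely coincides with the law of the first-$n$-arrivals subgraph of $G$, so that integrating out the positions of nodes $n_1+1, \dots, n_2$ and of all later points leaves the law of the first $n_1$ nodes unchanged. Under the coupling above this reduces to the identification $\Psi \cap H(t_n) = \{\text{first } n \text{ arrivals}\}$ together with the restriction and independent-marking properties of the Poisson process. The rectangular form of $H$ enters only through nestedness and $|H(t)| = t$; the auxiliary coordinate is carried along inertly, so the argument parallels that of Proposition~\ref{projectiveRCM} and is insensitive to the particular choice of $g$ and $p$.
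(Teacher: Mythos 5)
Your proof is correct, but it takes a genuinely different route from the paper's. The paper argues distributionally: using Lemma~\ref{waitingtimes} it notes that the arrival time $t_i$ of the $i$th node has the same (Gamma) law whether one is constructing the size-$n_1$ or the size-$n_2$ graph, hence the latent positions of the first $n_1$ nodes are equal in distribution across the two constructions, and it then inherits the dyad-wise argument of Proposition~\ref{exprojective} (``the distribution over each dyad coincides, so the distributions over adjacency matrices coincide''). You instead exhibit an explicit coupling: realize the whole sequence on one probability space via the Poisson process $\Psi$ plus i.i.d.\ uniform edge marks on unordered pairs of points, observe that nestedness of the windows and the a.s.\ strict ordering of arrivals force $\Psi \cap H(t_{n_1})$ to be exactly the first $n_1$ arrivals among the first $n_2$, and conclude that $Y^{n_1}$ is \emph{pointwise} the leading block of $Y^{n_2}$, from which the marginal identity is immediate. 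Your route buys two things the paper's write-up leaves implicit: (i) it handles the \emph{joint} law of all dyads automatically, whereas matching dyad-marginal distributions alone does not logically force equality of adjacency-matrix laws (the paper's argument is sound only because the joint position laws agree, which it does not spell out); and (ii) it makes explicit that the size-$n$ law is well defined independently of the choice of $t_n$. The paper's approach, in turn, is shorter given its earlier propositions and requires no marking construction. Your closing remark is also apt: since you use the rectangular structure only through nestedness and $|H(t)|=t$, your argument actually establishes projectivity for the general sparse LPM framework, not just the rectangular subclass.
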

\begin{proof}
Provided in \S \ref{projectiveproof}
\end{proof}

\begin{theorem}\label{sparsity}
A $d$-dimensional rectangular latent position network model is $n^{2-p}$-sparse in expectation, where $g(n) = n^{p/d}$.
\end{theorem}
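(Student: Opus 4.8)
The plan is to compute the expected edge count at a deterministic observation time $t$ (at which the expected number of nodes is exactly $|H(t)| = t$), show it grows like $\Theta(t^{2-p})$, and then pass to the node count via $n \asymp t$. Because $\Psi$ is a unit-rate Poisson process and, conditional on the configuration, edges are placed independently, the second-order Mecke (Campbell) formula expresses the expected number of edges inside $H(t)$ as a double integral against the intensity measure $\lambda = \ell_1 \times \ell_2$:
\[
\E[\#\text{edges at } t] = \tfrac{1}{2}\int_{H(t)}\int_{H(t)} K(\rho(z_x,z_y))\,\lambda(d(z_x,r_x))\,\lambda(d(z_y,r_y)).
\]
Since $K$ ignores the auxiliary coordinate, the two $r$-integrals factor out and contribute $|B(t)|^2$, where $B(t) = [0, t/(2g(t))^d]$ is the auxiliary window; with $g(t)=t^{p/d}$ one has $(2g(t))^d = 2^d t^p$ and hence $|B(t)| = 2^{-d}t^{1-p}$. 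What remains is the purely spatial integral $I(t) = \int_{A(t)}\int_{A(t)} K(\|z_x - z_y\|)\,dz_x\,dz_y$ over the latent cube $A(t) = [-g(t),g(t)]^d$.

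The crux is the asymptotics of $I(t)$, which I would obtain by changing variables to the difference $u = z_x - z_y$, rewriting
\[
I(t) = \int_{\mathbb{R}^d} K(\|u\|)\,\prod_{k=1}^{d}\max\bigl(0,\,2g(t) - |u_k|\bigr)\,du,
\]
where the product is precisely the overlap volume $\ell_1(A(t)\cap(A(t)-u))$. Dividing by $|A(t)| = (2g(t))^d$, each factor becomes $\max(0, 2g(t)-|u_k|)/(2g(t)) \in [0,1]$ and tends to $1$ pointwise as $g(t)\to\infty$. The natural dominating function is $K(\|u\|)$ itself, which is integrable over $\mathbb{R}^d$ exactly because the hypothesis $\int_0^\infty u^{d-1}K(u)\,du < \infty$ is, up to the surface-area constant of the unit sphere, the polar-coordinate form of $\int_{\mathbb{R}^d}K(\|u\|)\,du$. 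Dominated convergence then gives $I(t)/|A(t)| \to C_K := \int_{\mathbb{R}^d}K(\|u\|)\,du \in (0,\infty)$, i.e.\ $I(t) \sim C_K\,2^d t^{p}$. This is where the integrability assumption earns its keep: it simultaneously guarantees a finite positive limiting constant and forces the boundary shell of the cube (where the overlap is deficient) to be asymptotically negligible relative to the bulk.

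Assembling the pieces gives $\E[\#\text{edges at } t] \sim \tfrac12\,|B(t)|^2\,I(t) = \tfrac{C_K}{2^{d+1}}\,t^{2-p}$, so the expected edge count is $\Theta(t^{2-p})$. Finally I would convert from time to node count: the expected number of observed nodes at time $t$ is $|H(t)| = t$, and in the equivalent random-window viewpoint, in which $t_n$ is the smallest window holding exactly $n$ points, Poisson concentration yields $t_n \asymp n$. Substituting $t \asymp n$ (and noting that $(2g(t))^d \propto t^p$ is the sense in which one reads $g(n)=n^p$ in the statement) shows $\E[\sum_{ij}Y^n_{ij}]/n^{2-p}$ converges to the positive constant $C_K/2^{d+1}$, which is the definition of $n^{2-p}$-sparsity in expectation. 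The main obstacle is the dominated-convergence control of $I(t)$; the node-count conversion is routine given the fixed-window and random-window equivalence already established in the text, though a little care is needed to confirm that conditioning the Poisson process on exactly $n$ points leaves the leading-order constant undisturbed.
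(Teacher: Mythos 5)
Your proposal is correct in substance, but it travels a genuinely different route from the paper. You work at a deterministic time $t$, apply the Mecke/Campbell formula to the Poisson process, factor out the auxiliary coordinate, and nail the spatial integral asymptotics $I(t)\sim C_K\,(2g(t))^d$ by dominated convergence with dominating function $K(\|u\|)$; only at the end do you pass from time to node count. The paper instead works at fixed $n$ from the start: it symmetrizes with a random permutation, conditions on the arrival time $t_{n+1}$ of the $(n+1)$st node (under which, by its Lemma~\ref{uniformZ}, the $n$ visible positions are iid uniform on $[-g(t_{n+1}),g(t_{n+1})]^d$), bounds the per-pair probability by $C\,(2g(t_{n+1}))^{-d}$ via Lemma~\ref{distanceswitch}, and then integrates exactly against the Gamma$(n+1,1)$ density to get $n^p\,\Gamma(n-p+1)/\Gamma(n+1)\to 1$. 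The trade-off is instructive: your dominated-convergence step is actually \emph{more} careful than the paper at the one place the paper is loose --- its ``$\propto$'' only records an upper bound on the normalized spatial integral, whereas the definition of $e(n)$-sparsity requires an actual limit, which your argument supplies together with the explicit constant $C_K=\int_{\mathbb{R}^d}K(\|u\|)\,du$. Conversely, the paper is airtight exactly where you are sketchy: the conversion from Poisson-many nodes at deterministic $t$ to exactly $n$ nodes. To make that step rigorous you should do it the paper's way: condition on $t_{n+1}$ (not on $t_n$, which forces one point onto the window boundary and spoils the iid-uniform structure), write the expected edge count as $\binom{n}{2}\,\E\bigl[I(t_{n+1})/(2g(t_{n+1}))^{2d}\bigr]$, and use your uniform bound $I(T)/(2g(T))^{d}\le C$ together with $\E\bigl[g(t_{n+1})^{-d}\bigr]=\Gamma(n+1-p)/\Gamma(n+1)\sim n^{-p}$ (or Gamma concentration) to pass your asymptotic constant through the expectation. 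With that patch the two proofs yield the same conclusion, and yours additionally identifies the limiting constant.
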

\begin{proof}
Provided in \S \ref{sparsityproof}
\end{proof}

By specifying the appropriate value of $p$ for a rectangular LPM, it is thus possible to obtain any polynomial level of sparsity within $n$-sparse and $n^2$-sparse (dense) in expectation. Other intermediate rates of sparsity such as $n\log(n)$ can also be obtained considering non-polynomial $g(n)$. We now investigate for which levels of sparsity it is possible to do reliable statistical inference of the latent positions.

\section{LEARNABILITY}\label{learnability}

\subsection{Preliminaries}\label{learnabilitypre}

Recall that the edge probabilities in a LPM are controlled by two things: the link probability function $K$ and the latent positions $Z \in S^n$. In this section, we consider the problem of consistently estimating the latent positions for a LPM using the observed adjacency matrix. We focus on the case where both $K$ and $S = \mathbb{R}^d$ are known, relying on assumptions that are compatible with rectangular LPMs.

In the process of establishing our consistent estimation results for $Z$, we also establish consistency results for two other quantities: the squared latent distance matrix $D^{Z} \in \mathbb{R}^{n \times n}$ defined by $D^Z_{ij} = \|Z_i - Z_j\|^2$ and the link probability matrix $P^Z \in [0,1]^{n \times n}$ defined by $P^Z_{ij} = K((D^Z_{ij})^{1/2})$. These results are also of independent interest because---like $Z$---the distance matrix and link probability matrix also characterize a LPM when $K$ is known.

We use the following notation and terminology to communicate our results. Let $\| \cdot\|_F$ denote the Frobenius norm of a matrix, $ \convp$ denote convergence in probability, $\mathcal{O}_d$ denote the space of orthogonal matrices on $\mathbb{R}^{d \times d}$, and $\mathcal{Q}_{nd} \subset \mathbb{R}^{n \times d}$ denote the set of all $n \times d$ matrices with identical rows. 

We say that a LPM has \emph{learnable latent positions} if there exists an estimator $\hat{Z}(Y^n)$ such that
\begin{align}
\lim_{n \rightarrow \infty} \inf_{O \in \mathcal{O}_d, Q \in \mathcal{Q}_{nd}}\frac{\|\hat{Z}(Y^n)O- Q -Z\|_F^2}{n}  \convp 0.
\end{align}
That is, a LPM has learnable positions if there exists an estimator $\hat{Z}(Y^n)$ of the latent positions such that the average distance between $\hat{Z}(Y^n)$ and the true latent positions converges to 0. The infimum over the transformations induced by $O \in \mathcal{O}_d$ and $Q \in \mathcal{Q}_{nd}$ is included to account for the fact that the likelihood of a LPM is invariant to isometric translations (captured by $Q$) and rotations/reflections (captured by $O$) of the latent positions \citep{CRS-Asta-consistency-of-embedding}. 

We say that a LPM has \emph{learnable squared distances} if there exists an estimator $\hat{Z}(Y^n)$ such that
\begin{align}
\lim_{n \rightarrow \infty} \frac{\|D^{\hat{Z}(Y^n)} - D^{Z}\|_F^2}{n^2}  \convp 0.
\end{align}
That is, a LPM has learnable squared distances if the average squared difference between the estimator for the matrix of squared distances induced by $\hat{Z}(Y^n)$ and the true matrix of squared distances $D^Z$ converges to 0. Unlike the latent positions, $D^Z$ is uniquely identified by the likelihood; there is no need to account for rotations, reflections, or translations. 

Finally, we say a LPM that is $e(n)$-sparse in expectation has \emph{learnable link probabilities} if there exists an estimator $\hat{Z}(Y^n)$ such that
\begin{align}
\lim_{n \rightarrow \infty} \frac{\|P^{\hat{Z}(Y^n)} - P^{Z}\|_F^2}{e(n)}  \convp 0.
\end{align}
Note that a scaling factor of $e(n)$ is used instead of $n^2$ to account for the sparsity. Otherwise the link probability matrix for a sparse graph could be trivially estimated because $n^{-2} \|P^{Z}\|_F^2  \convp 0$.

\subsection{Related Work on Learnability}

Before presenting our results, we summarize some of the existing work on learnability of LPMs in the literature.  \citet{Choi-Wolfe-LPM-learnability} considered the problem of estimating LPMs from a classical statistical learning theory perspective. They established bounds on the growth function and shattering number for LPMs with link function given by $K(\delta) = (1 + \exp{\delta})^{-1}$. However, we have found that their inequalities were not sharp enough to be helpful for proving learnability for sparse LPMs. 

\citet{CRS-Asta-consistency-of-embedding} provide regularity conditions under which LPMs have learnable positions on general spaces $S$, assuming that the link probability function $K$ is known and possesses certain regularity properties. Specifically, they require that the absolute value of the logit of the link probability function is slowly growing, which does not necessarily hold in our setting. 

Our learnability results more closely resemble those of \citet{Ma-Ma-Exploration}, who consider a latent variable network model of the form $\text{logit}(\mathbb{P}(A_{ij} = 1)) = \alpha_i + \alpha_j + \beta X_{ij} + Z_i^T Z_j$, originally due to \citet{Hoff-billinear-dyadic}. Here, $\alpha_i$ denote node-specific effects, $X_{ij}$ denote observed dyadic covariates and $\beta$ denotes a corresponding linear coefficient. If there are no covariates and $\alpha_i = \|Z_i\|^2/2$, their approach defines a LPM with $K(\delta) = \text{expit}(-\delta^2)$. \citet{Ma-Ma-Exploration} provide algorithms and regularity conditions for consistent estimation of both the logit-transformed probability matrix and $Z^TZ$ under this model, using results from \citet{Davenport-et-al-1-bit}. Here, we will use similar concentration arguments to establish Lemmas~\ref{consistentprob} and \ref{consistentdist}, but our results differ in that we consider a more general class of link functions, and also establish learnability of latent positions via an application of the Davis-Kahan theorem.

Our learnability of latent positions result (Lemma~\ref{consistentpositions}) resembles that of \citet{Sussman-Tang-Priebe-consistent}, who establish that the latent positions for dot-product network models can be consistently estimated. The dot product model---a latent variable model which is closely related to the LPM---has a link probability function defined by $K(Z_i, Z_j) = Z_i \cdot Z_j$ with $Z_j, Z_j \in S$. The latent space $S \subset \mathbb{R}^d$ is defined such that all link probabilities must fall with $[0,1]$. Our proof technique follows a similar argument as the one used to prove their Proposition 4.3.

It should be noted that learnability of the link probability matrix for the sparse LPM could be established by applying results from \emph{Universal Singular Value Thresholding} \citep{Chatterjee-matrix-estimation, Xu-rates-convergence-graphon}. However, it is unclear how to extend such estimators to establish learnability of the latent positions; estimated probability matrices from universal singular value thresholding do not necessarily translate to a valid set of latent positions for a given link function.

Other related work includes \citet{Arias-Castro-et-al-estimating-latent-distances}, which considers the problem of estimating latent distances between nodes when the functional form of the link probability function is unknown. They show that, if the link probability function is non-increasing and zero outside of a bounded interval, the lengths of the shortest paths between nodes can be used to consistently rank the distances between the nodes. \citet{Diaz-McDiarmid-Mitsche-geometric-graphs} and \citet{Rocha-Janssen-Kalyaniwalla-linear-graphs} also propose estimators in similar settings with more specialized link functions. None of these approaches are appropriate for our case---we are interested in recovering the latent positions under the assumption $K$ is known with positive support on the entire real line.

\subsection{Learnability Results}\label{criteria}

Our learnability results assume the following criteria for a LPM:
\begin{enumerate}
\item The link probability function $K$ is known, monotonically decreasing, differentiable, and upper bounded by $1-\epsilon$ for some $\epsilon > 0$.
\item The latent space $S\subseteq \mathbb{R}^d$.
\item There exists a known differentiable function $G(n)$ such that 
\begin{align}
I(\|Z_n\| \leq G(n)) & \convp 1.
\end{align}
\end{enumerate}
We refer to the above conditions as \emph{regularity criteria} and refer to any LPM that meets them as \emph{regular}. Criterion 3 implies that the sequence of latent positions is tight \citep[p. 66]{Kallenberg-mod-prob}. The class of regular LPMs contains several popular LPMs. Notably, both rectangular and exchangeable LPMs due to \citet{Hoff-Raftery-Handcock} are regular, as shown in Lemmas~\ref{rectangularareregular} and Lemma~\ref{exchangeableareregular}. For a rectangular LPM, the $G(n)$ in criterion 3 is closely related to $g(t)$---the width of the observation window. Specifically, it is established in Lemmas 10 and 11 in \S A.1 that a rectangular LPM with $g(t) = t^{p/d}$ satisfies criterion 3 with $G(n) = 2\sqrt{d} n^{p/d}$. Here, $t$ refers to the size of observation window (i.e. the expected number of observed nodes), and $n$ refers directly to the number of observed nodes.

Our approach for establishing learnability of $Z$ involves proposing a particular estimator for $Z$ which meets the learnability requirement as $n$ grows. Our proposed estimator is a restricted maximum likelihood estimator for $Z$, provided by the following equation:
\begin{align}\label{mle}
\hat{Z}(Y^n) = \text{argmax}_{z: \|z_i\| \leq G(n) \forall i \in 1:n} L(z: Y^n)
\end{align}
where $L(z: Y^n)$ denotes the log likelihood of latent positions $z = (z_1, \ldots z_n) \in \mathbb{R}^{n \times d}$ for a $n \times n$ adjacency matrix $Y^{n}$. We use $D^{\hat{Z}(Y^n)}$ and $P^{\hat{Z}(Y^n)}$ to denote the corresponding estimates of the squared distance matrix and link probability matrix. Note that the log likelihood $L(z:Y^n)$ is given by
\begin{align}
L(z: Y^n) &= \sum_{i=1}^n \sum_{j =1}^n Y^n_{ij}  \log \left(K(\|z_i - z_j\|)\right) + (1-Y^n_{ij})  \log \left(1- K(\|z_i - z_j\|)\right). \label{thelikelihood}
\end{align}

To establish consistency, we first provide a concentration inequality for the maximum likelihood estimate of $Z$ in Lemma~\ref{consistentpositions}. En route to deriving Lemma~\ref{consistentpositions}, we also derive inequalities for the associated squared distance matrix $D^{Z} \in \mathbb{R}^{n \times n}$ defined by $D^Z_{ij} = \|Z_i - Z_j\|_F^2$ (Lemma~\ref{consistentdist}) and the link probability matrix $P^Z \in [0,1]^{n \times n}$ defined by $P^Z_{ij} = K((D^Z_{ij})^{1/2})$ (Lemma~\ref{consistentprob}). We combine these results in Theorem~\ref{consistencies} to provide conditions under which it is possible to consistently estimate $Z$, $D^Z$, and $P^Z$. 

Our results are sensitive to the particular choices of link probability function $K$ and upper bounding function $G$. For this reason, we introduce the following notation to communicate our results.
\begin{align}
\alpha^K_{n} = \sup_{0 \leq x \leq 2G(n)} \frac{|K'(x)|}{|x| K(x)\epsilon},\label{alpha}\\
\beta^K_{n} = \sup_{0 \leq x \leq 2G(n)} \frac{x^2 K(x)}{K'(x)^2} \label{beta},
\end{align}
where $K'(x)$ denotes the derivative of $K(x)$ and $\epsilon$ is given by the criteria on $K$ imposed by regularity criterion 1.

\begin{lemma}\label{consistentprob}
Consider a sequence adjacency matrices $Y^n$ generated by a regular LPM with $\|Z_n\| \leq G(n)$ for all $n$. Let $P^{\hat{Z}(Y^n)}$ denote the estimated link probability matrix obtained via $\hat{Z}(Y^n)$ from (\ref{mle}). Then,
\begin{align}
\mathbb{P}\left(\|P^{\hat{Z}(Y^n)} - P^{Z}\|_F^2 \geq 16 e \alpha^K_n G(n)^2 n^{1.5} (d + 2) \right) \leq \frac{C}{n^2}
\end{align}
for some constant $C > 0$.
\end{lemma}
\begin{proof}
Provided in \S \ref{proofconsistentprob}.
\end{proof}

\begin{lemma}\label{consistentdist}
Consider a sequence adjacency matrices $Y^n$ generated by a regular LPM with $\|Z_n\| \leq G(n)$ for all $n$. Let $D^{\hat{Z}(Y^n)}$ denote the matrix of estimated squared distances obtained via $\hat{Z}(Y^n)$ from (\ref{mle}). Then,
\begin{align}
\mathbb{P}\left(\|D^{\hat{Z}(Y^n)} - D^{Z}\|_F^2 \geq 2^9 e \alpha^K_n \beta^K_n G(n)^2 n^{1.5} (d + 2) \right) \leq \frac{C}{n^2}
\end{align}
for some constant $C > 0$.
\end{lemma}
\begin{proof}
Provided in \S \ref{proofconsistentdist}.
\end{proof}

Establishing concentration of the estimated latent positions is complicated by the need to account for the minimization over all possible rotations, translations, and reflections. The following matrix, known as the double-centering matrix, is a useful tool to account for translations: 
\begin{align}\label{doublecentering}
\mathcal{C}_n = I_n - \frac{1}{n} 1_n1_n^T
\end{align}
Here, $I_n$ denotes the $n$-dimensional identity matrix and $1_n$ denotes $n \times 1$ matrix consisting of ones.

To establish our concentration of the estimated latent positions, we place conditions on the eigenvalues of the matrix $\mathcal{C}_n Z Z^T \mathcal{C}_n$.  For a regular LPM, let $\lambda_1 \geq \cdots \geq \lambda_d$ denote the $d$ nonzero eigenvalues of $\mathcal{C}_n Z Z^T \mathcal{C}_n$ and define $\lambda_{d+ 1} := 0$. For functions $a:\mathbb{N} \rightarrow \mathbb{R}$ and  $b:\mathbb{N} \rightarrow \mathbb{R}$, we say that a LPM possesses \emph{$a(n)-b(n)$ distinctly bunched eigenvalues} if there exists a $k \in \left\{1, \ldots, d\right\}$ and integers $i_{1}, \ldots, i_{k+1}$ satisfying $1= i_1 < i_2 < \cdots < i_k < i_{k+1} = d+1$ such that
\begin{align*}
\frac{\left(\lambda_{i_{j+1} - 1} - \lambda_{i_j} \right)^2}{ \lambda_{i_{j+1} - 1} } \leq a(n) \text{ and } \frac{\left(\lambda_{i_{j+1} - 1} - \lambda_{i_{j+1}} \right)^2}{ \lambda_{i_j}} \geq b(n) \text{ for all $j \in \left\{1, \ldots, k\right\}$.}
\end{align*}

In this definition, $i_1,\ldots, i_{k+1}$ are boundary indices partitioning the eigenvalues $\lambda_1, \ldots, \lambda_d$. Eigenvalues within the same subset of the partition can be thought of as remaining close to each other as $n$ increases, whereas those from different subsets are distinguishable from each other as $n$ grows. The levels of $a(n)$ and $b(n)$ dictate the level of proximity and distinguishability. Corollary~\ref{rectangular_eigenstable} in Appendix~\ref{A} establishes that rectangular LPMs possess $a(n)-b(n)$ distinctly bunched eigenvalues with $a(n)$ and $b(n)$ depending on the level of sparsity---sparser graphs require larger $a(n)$ and smaller $b(n)$'s. Similarly, Corollary~\ref{hoffeigenstable} establishes that exchangeable LPMs due to \citet{Hoff-Raftery-Handcock} possess $a(n)-b(n)$ distinctly bunched eigenvalues with $a(n) = O\left(1\right)$ and $b(n)^{-1} = O(n^{-1})$.

\begin{lemma}\label{consistentpositions}
Consider a sequence adjacency matrices $Y^n$ generated by a regular LPM possessing $a(n)-b(n)$ distinctly bunched eigenvalues with $\|Z_n\| \leq G(n)$ for all $n$. Then,
\begin{align}
\mathbb{P}\left(\inf_{\substack{O \in \mathcal{O}_{d}\\ Q \in \mathcal{Q}_{nd}}} \|\hat{Z}(Y^n)O - Z - Q\|^2_F \geq \frac{3 a(n)}{(d - 1)^{-1}} +  \frac{ 2^9 e \alpha^K_n \beta^K_n G(n)^2 n^{1.5}}{(50 (d + 2))^{-1} b(n) }  \right) \leq \frac{C}{n^2}
\end{align}
for $C > 0$, where $\mathcal{O}_d$ denotes the space of orthogonal matrices on $\mathbb{R}^{d \times d}$, $\mathcal{Q}_{nd} \subseteq \mathbb{R}^{n \times d}$ is the set of matrices with $n$ identical $d$-dimensional rows,  and $\hat{Z}(Y^n)$ is obtained via (\ref{mle}).
\end{lemma}
\begin{proof}
Provided in \S \ref{proofconsistentpositions}.
\end{proof}


These three concentration results can be translated into sufficiency conditions for learnability. We summarize these in Theorem~\ref{consistencies}. 
\begin{theorem} \label{consistencies}
A regular LPM that is $e(n)$-sparse in expectation has:
\begin{enumerate}
\item learnable link probabilities if $\alpha^K_n e(n)^{-1} n^{1.5} G(n)^2 \rightarrow 0$ as $n$ grows.
\item learnable squared distances if $\beta^K_n \alpha^K_n n^{-0.5} G(n)^2 \rightarrow 0$ as $n$ grows.
\item learnable latent positions if it possesses $a(n)-b(n)$ distinctly bunched eigenvalues with
\begin{align*}
\frac{a(n)}{n} \rightarrow 0 \text{  and  }
\frac{\beta^K_n \alpha^K_n n^{0.5} G(n)^2}{b(n)} \rightarrow 0
\end{align*}
as $n$ grows.
\end{enumerate} 
\end{theorem}
\begin{proof}
Provided in \S \ref{consistenciesproof}.  
\end{proof}

It may seem counter-intuitive that the conditions for learnability of $Z$, $P^Z$ and $D^Z$ differ, even though their estimators are all derived from the same quantity. For example, if $\beta^K_n$ grows quickly enough, the LPM may have learnable link probabilities but not squared distances. This disparity can be understood by considering the metrics implied by each form learnability. 

Suppose that $\delta_{ij} = \|Z_i - Z_j\|$ is very large. Then mis-estimating $\delta_{ij}$ by a constant $c>0$  (i.e. $\hat{\delta}_{ij} = \delta_{ij} + c$) contributes $(2\delta_{ij}c + c^2)^2$ to the error in $\|D^{\hat{Z}} - D^Z\|_F^2$. This contribution to the error is sizable, and can hinder convergence if made too often. However, the influence of the same mistake on $\|P^{\hat{Z}} - P^Z\|_F^2$ is minor; because the probability $K(\delta)$ is already small for large $\delta$, $(K(\delta +c) - K(\delta))^2$ does not contribute much to the error. For small distances, the opposite may be true; a small mistake in estimated distance may lead to a large mistake in estimated probability. Thus, learnability of squared distances penalizes mistakes differently than learnability of link probabilities. However, there are typically far more large distances than small distances, meaning that the distance metric imposed by learnability of link probabilities is typically less stringent than for learnability of squared distances.

Theorem~\ref{consistencies} can be used to establish Corollary~\ref{rectangularlearnability}, a learnability result for rectangular LPMs. 
\begin{cor} \label{rectangularlearnability}
Consider a $d$-dimensional rectangular LPM with $g(n) = n^{p/d}$ and link probability function $K(\delta) = (C + \delta^2)^{-q}$ for some $C > 0$, where $q > \text{max}(\left\{d/2, 1\right\})$ and $0 \leq p \leq 1$. Such a network has learnable 
\begin{enumerate}
\item link probabilities if $2p < \left( 1 + 2/d\right)^{-1}$,
\item distances if $2p < d\left( 2q+6\right)^{-1}$,
 \item latent positions if $2p < d\left( 2q+4\right)^{-1}$. 
\end{enumerate}
Thus, for any $s \in (1.5, 2]$, it is possible to construct a LPM that is projective, $n^{s}$-sparse in expectation, and has learnable latent positions, distances, and link probabilities. 
\end{cor}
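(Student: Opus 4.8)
The plan is to specialize Theorem~\ref{consistencies} to this model, so the bulk of the work is to pin down the growth rates of the quantities entering its three conditions and then read off the exponents. By Theorem~\ref{sparsity} the model is $n^{2-p}$-sparse in expectation, so I take $e(n) = n^{2-p}$. By Lemma~\ref{rectangularareregular} the model is regular; since the latent coordinates of the $n$-th graph are confined to the box $[-g(t_n),g(t_n)]^d$ and the arrival time satisfies $t_n \asymp n$ (the number of points in $H(t)$ is Poisson with mean $t = |H(t)|$), I take $G(n) = \Theta(n^{p/d})$.

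Next I would compute $\alpha^K_n$ and $\beta^K_n$ for $K(\delta) = (C+\delta^a)^{-1}$, whose derivative is $K'(\delta) = -a\delta^{a-1}K(\delta)^2$. Substituting gives $\frac{|K'(x)|}{\sqrt{x}K(x)\epsilon} = \frac{a\,x^{a-3/2}}{\epsilon(C+x^a)}$ and $\frac{xK(x)}{K'(x)^2} = \frac{x^{3-2a}(C+x^a)^3}{a^2}$. The first expression has an interior maximizer that does not move with $n$ (or is pinned at the fixed floor $\epsilon_K$) and vanishes as $x\to\infty$, so $\alpha^K_n = \Theta(1)$. The second grows like $x^{a+3}$ for large $x$, so its supremum over $[\epsilon_K, 2G(n)]$ is attained at the upper endpoint, giving $\beta^K_n = \Theta(G(n)^{a+3}) = \Theta(n^{p(a+3)/d})$.

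The delicate step is the eigenvalue input to the latent-position condition. Because the nonzero eigenvalues of $\mathcal{C}_n Z Z^T \mathcal{C}_n$ coincide with those of the $d\times d$ Gram matrix $(\mathcal{C}_n Z)^T(\mathcal{C}_n Z) = \sum_i (Z^i-\bar{Z})(Z^i-\bar{Z})^T$, and because, conditional on there being $n$ nodes, the latent positions are i.i.d.\ uniform on $[-g(t_n),g(t_n)]^d$, a law-of-large-numbers argument shows the empirical covariance concentrates at $\frac{g(t_n)^2}{3}I_d$. Thus with probability tending to one $\lambda_1,\lambda_d = \Theta(n\,g(t_n)^2) = \Theta(n^{1+2p/d})$, whence $\lambda_1\lambda_d^{-2} = \Theta(n^{-(1+2p/d)})$. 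I expect this to be the main obstacle: one must simultaneously control the random arrival time $t_n$ so that $g(t_n)\asymp n^{p/d}$ and bound $\lambda_d$ away from degeneracy uniformly, since these random eigenvalues sit inside the otherwise deterministic hypothesis of Theorem~\ref{consistencies} while learnability is only an in-probability statement.

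With these rates the three conditions collapse to single powers of $n$: the link-probability exponent is $-\tfrac12 + p(1+1/d)$, the squared-distance exponent is $\frac{p(a+4)}{d}-\tfrac12$, and the latent-position exponent is $\frac{p(a+2)}{d}-\tfrac12$, and requiring each to be negative yields exactly the three displayed inequalities. For the final assertion, note that since $a>d$ the distance bound $d(a+4)^{-1}$ is smaller than both $d(a+2)^{-1}$ and $(1+1/d)^{-1}$, so condition~2 implies all three learnabilities. Given $b\in(1.5,2]$, set $p = 2-b \in [0,\tfrac12)$, so that $2p = 4-2b < 1$ and, by Theorem~\ref{sparsity}, the model is $n^{2-p}=n^{b}$-sparse; taking $a = d+1$ (which simultaneously guarantees the required integrability, since $\int_0^\infty u^{d-1}K(u)\,\mathrm{d}u<\infty$ exactly when $a>d$) and choosing the dimension $d$ large enough that $d/(d+5) > 4-2b$ makes $2p < d(a+4)^{-1}$ hold. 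Projectivity then follows from Theorem~\ref{projective}, completing the construction.
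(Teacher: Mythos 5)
Your proposal is correct and follows essentially the same route as the paper's proof: specialize Theorem~\ref{consistencies} with $e(n)=n^{2-p}$ from Theorem~\ref{sparsity}, $G(n)=\Theta(n^{p/d})$ from Lemma~\ref{rectangularareregular}, the computations $\alpha^K_n=\Theta(1)$ and $\beta^K_n=\Theta\bigl(G(n)^{a+3}\bigr)=\Theta\bigl(n^{p(a+3)/d}\bigr)$, and the eigenvalue rates $\lambda_1,\lambda_d=\Theta\bigl(n^{1+2p/d}\bigr)$, then read off the three exponent conditions. The law-of-large-numbers step you flag as the main obstacle is precisely what the paper's Lemma~\ref{eigenlemma} makes rigorous (conditional i.i.d.\ uniformity after a random permutation, Hoeffding bounds on the $d\times d$ Gram matrix, Weyl's inequality, and Gamma-concentration of the random arrival time), so your sketch matches the paper's treatment rather than diverging from it.
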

\begin{proof}
Provided in \S \ref{rectangularlearnabilityproof}.  
\end{proof}
Corollary~\ref{rectangularlearnability}, combined with the projectivity of rectangular LPMs, guarantees the existence of a LPM that is projective, learnable, and sparse for any sparsity level that is denser than $n^{3/2}$-sparse in expectation. Thus, we have shown that we have met our desiderata for LPMs laid out in the introduction.

Perhaps surprisingly, our result in Corollary~\ref{rectangularlearnability} depends upon the dimension of the latent space. The higher the dimension, the richer the levels of learnable sparsity. Moreover, the learnability results in Theorem~\ref{consistencies} only apply to rectangular LPMs with link functions that decay polynomially. The $\beta^K_n$ term is too large for the exponential-style decays that are commonly considered in practice \citep{Hoff-Raftery-Handcock, Rastelli-latent-variable-network}. We elaborate on these points in \S\ref{conclusion}.

In contrast, it is possible to prove learnability of exchangeable LPMs with exponentially decaying $K$. Corollary~\ref{hofflearnability} guarantees learnability of the exchangeable LPM for two exponential-style link functions. As far as we are aware, these are the first result learnability results for the latent positions for the original exchangeable LPM. 


\begin{cor}\label{hofflearnability}
Consider a LPM on $S = \mathbb{R}^d$ with each latent position independently and identically distributed according to the multivariate Gaussian distribution with mean zero and diagonal variance matrix $\Sigma$. Let $\sigma^2_1, \ldots, \sigma^2_d$ denote the entries along the diagonal of $\Sigma$, with $\sigma_1 \geq \sigma_2 \geq \cdots \geq \sigma_d > 0$. Suppose that the link probability function is given by either
\begin{align}
K(\delta) = (1 + \exp{(\delta^2)})^{-1} \text{  or  }
K(\delta) = \tau e^{-\delta^2}.
\end{align}
for $\tau \in (0,1)$. Such a network has learnable link probabilities, distances, and latent positions provided that $\sigma_1^2 < 1/4$.
\end{cor}
\begin{proof}
Provided in \S \ref{hofflearnabilityproof}.  
\end{proof}

Notably, the set of link functions in Corollary~\ref{hofflearnability} does not include the traditional expit link function that was suggested in the original paper LPM by \citet{Hoff-Raftery-Handcock}. The expit class of link functions implies a value $\alpha_n^k$---defined as in (\ref{alpha})---that is unbounded (see Table~\ref{differentlinks} in Appendix~\ref{A} for a summary of the $\alpha^k_n$ and $\beta^K_n$ values for various link functions), meaning that Lemma~\ref{consistencies} cannot be applied to prove learnability for this class of LPMs. This does not necessarily mean that expit LPMs are not learnable, just that determining their learnability remains an open problem. Note however, that some classes of sparse LPMs (such as the example considered in Theorem~\ref{negative} (\S \ref{negatives})) \emph{are} provably unlearnable. We elaborate on this point in \S\ref{conclusion}.

The results in Theorem~\ref{consistencies} can also be used to obtain learnability results for more specialized LPMs such as sparse graphon-based LPMs. We provide such a result in \S\ref{sparsegraphon} when comparing sparse graphons with our approach.

\section{COMPARISONS AND REMARKS}\label{sec:remarks}

Existing tools for constructing sparse graph models, such as the sparse graphon framework \citep{Bollobas-Janson-Riordan-Phase,
  Borgs-Chayes-Zhao-sparse-graph-convergence} or the graphex framework \citep{Caron-Fox-sparse-graphs, Veitch-Roy-graphex-class,
  Borgs-Chayes-Holden-sparse-exchangeable} can be used to develop suitably sparse latent position models. However, both approaches introduce sparsity in ways that have undesirable side effects for LPMs. We now describe both the sparse graphon framework (\S \ref{sparsegraphon}) and the graphex framework (\S \ref{graphex}), with discussion of how these frameworks fail to meet our desiderata of projectivity, learnability, and other useful properties for LPMs such as edge transitivity. Finally, we conclude by making some remarks on the results we have derived this article (\S \ref{conclusion}).

\subsection{Sparse Graphon-based Latent Position Models}\label{sparsegraphon}

\citet{Borgs-Chayes-Zhao-sparse-graph-convergence} proposed a modification of
graphon models to allow sparse graph sequences. Seeing as exchangeable LPMs are within the graphon family, it is straightforward to specialize
this approach to define sparse graphon-based LPMs.

As in exchangeable latent position models, the latent positions for a sparse
graphon-based LPM are each drawn from a common
distribution $f$, independently of each other the number of nodes $n$. However, the
link probability function
$\mathbb{P}(Y_{ij} = 1|Z_i, Z_j) = K_n(\rho(Z_i, Z_j))$ is allowed to depend on
$n$. Specifically, $K_n(x) = \text{min}(\left\{s_nK(x), 1\right\})$ where $(s_n)_{1\ldots \infty}$ is a
non-increasing sequence and $K: \mathbb{R}_+ \rightarrow \mathbb{R}_+$ satisfies $\mathbb{E}(K(\rho(Z_i, Z_j))) < \infty$ for $Z_i, Z_j \sim f$. These models express
sparse graph sequences, with the sequence $(s_n)_{1\ldots \infty}$ controlling
the sparsity of the resultant graph sequence.

\begin{prop}\label{sparsegraphonissparse}
Sparse graphon-based latent position models define a $n^2 s_n$-sparse in expectation graph sequence. 
\end{prop}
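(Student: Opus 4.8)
The plan is to compute the expected number of edges directly and show it scales as $\Theta(n^2 s_n)$. Since the sparse graphon-based LPM draws latent positions $Z_i$ iid from a common distribution $f$ and then connects dyads independently with probability $K_n(\rho(Z_i,Z_j)) = \min\{s_n K(\rho(Z_i,Z_j)), 1\}$, the expected number of edges is
\begin{align}
\E\left(\sum_{i=1}^n \sum_{j=1}^n Y^n_{ij}\right) = \sum_{i=1}^n \sum_{j=1}^n \E\left(K_n(\rho(Z_i,Z_j))\right),
\end{align}
where the diagonal terms vanish since there are no self-loops. By the iid assumption, every off-diagonal summand equals the same quantity $\mu_n := \E\left(\min\{s_n K(\rho(Z_1,Z_2)), 1\}\right)$, so the expected edge count is exactly $n(n-1)\mu_n$. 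The goal then reduces to showing $\mu_n = \Theta(s_n)$, from which dividing by $n^2 s_n$ and taking $n\to\infty$ yields a finite positive constant $C$, matching the definition of $n^2 s_n$-sparse in expectation.

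First I would establish the upper bound $\mu_n \leq s_n \E\left(K(\rho(Z_1,Z_2))\right)$, which is immediate from $\min\{s_n K(x),1\} \leq s_n K(x)$. This requires the integrability assumption $\E(K(\rho(Z_1,Z_2))) = \int\int K(\rho(z,z'))f(z)f(z')\,dz\,dz' < \infty$, which holds for the integrable link functions under consideration. For the matching lower bound, since $(s_n)$ is non-increasing, for $n$ large enough $s_n$ is small, so on the set where $s_n K(\rho(Z_1,Z_2)) \leq 1$ the minimum equals $s_n K(\rho(Z_1,Z_2))$. As $s_n \to 0$ (the nontrivial sparse case), the dominated convergence theorem gives $s_n^{-1}\mu_n \to \E(K(\rho(Z_1,Z_2)))$, a finite positive constant. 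Hence
\begin{align}
\lim_{n\to\infty} \frac{n(n-1)\mu_n}{n^2 s_n} = \E\left(K(\rho(Z_1,Z_2))\right) =: C \in \mathbb{R}_+,
\end{align}
which is exactly the claimed $n^2 s_n$-sparsity in expectation.

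The main obstacle is a boundary case rather than the central calculation: one must rule out the degenerate situation where $s_n$ does not tend to $0$. If $s_n$ is bounded below by a positive constant, then $K_n = K$ eventually (up to the truncation) and the graph is dense, so $n^2 s_n = \Theta(n^2)$ still correctly describes the scaling — the statement remains valid but the limiting constant arises from a truncated integral rather than $\E(K)$. I would handle this by treating $s_n \to 0$ as the representative sparse regime and noting that the truncation $\min\{\cdot,1\}$ only ever decreases $\mu_n$, so the $\Theta(s_n)$ scaling is preserved in all cases; the only genuine requirement is that $\E(K(\rho(Z_1,Z_2)))$ be finite and strictly positive, the former from integrability of $K$ and the latter from $K$ being a strictly positive link probability function on a nondegenerate $f$.
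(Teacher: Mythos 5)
Your proposal is correct and takes essentially the same route as the paper: compute the expected edge count by conditioning on the iid latent positions, so that it equals $n^2$ (up to the immaterial diagonal terms) times $\mathbb{E}\left(K_n(\rho(Z_1,Z_2))\right)$, which scales as $\Theta(n^2 s_n)$. The only difference is one of rigor rather than method: the paper simply writes $\mathbb{E}\left(K_n(\rho(Z_i,Z_j))\right) = s_n\,\mathbb{E}\left(K(\rho(Z_i,Z_j))\right)$, silently dropping the truncation $\min\{\cdot,1\}$, whereas your upper bound plus dominated-convergence argument (together with explicitly flagging finiteness and positivity of $\mathbb{E}\left(K(\rho(Z_1,Z_2))\right)$) justifies that step when the truncation can bind.
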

\begin{proof}
Proof provided in \S \ref{sparsegraphonissparseproof}.
\end{proof}

Moreover, the learnability results in Theorem~\ref{consistencies} can be used to establish learnability results for sparse graphon-based versions of popular LPMs. 
\begin{cor}\label{learnablesparsegraphon}
Consider the following sparse graphon-based version of the exchangeable LPM. Let $S = \mathbb{R}^d$ with the latent positions distributed according an isotropic Gaussian random vector with any variance $\sigma^2 < 1/4$. Suppose that the link probability function is given by either
\begin{align}
K_n(\delta) = n^{-p}(1 + \exp{(\delta^2)})^{-1} \text{  or  }
K_n(\delta) = \tau n^{-p} e^{-\delta^2}
\end{align}
for $\tau \in (0,1)$, $0 \leq p \leq 1$. Such a network has learnable link probabilities, squared distances, and latent positions if $ p < 1/2 - 2\sigma^2(1+c)$ for $c>0$. Thus, given an appropriate $\sigma^2$, this LPM can be both $n^b$-sparse and learnable for $b\in(1.5,2]$.
\end{cor}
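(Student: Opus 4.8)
The plan is to verify, for this specific model, the three numerical conditions of Theorem~\ref{consistencies}, after computing the ingredients $\alpha^{K_n}_n$, $\beta^{K_n}_n$, the sparsity scaling $e(n)$, and the Gram-matrix eigenvalues $\lambda_1,\lambda_d$. Write $K_n(\delta) = n^{-p}\sigma(\delta)$ with $\sigma(\delta) = (1+\tau e^{\delta})^{-1}$. First I would confirm the model is regular: $K_n$ is decreasing and differentiable, and since $K_n(0) = n^{-p}/(1+\tau) \le 1/(1+\tau)$, criterion~1 holds with $\epsilon = \tau/(1+\tau)$; criterion~2 is immediate since $S=\mathbb{R}^d$; and criterion~3 follows exactly as in the regularity proof for exchangeable LPMs (Lemma~\ref{exchangeableareregular}), because the positions are iid isotropic Gaussian. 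Gaussian tail bounds (a union/Borel--Cantelli argument over the norms of the first $n$ points) permit the choice $G(n) = \Theta(\sqrt{\log n})$, which is the decisive feature of this setting.

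The heart of the proof is the computation of $\alpha^{K_n}_n$ and $\beta^{K_n}_n$. Because the prefactor $n^{-p}$ appears in both $K_n$ and $K_n'$, it cancels in the ratio $|K_n'(x)|/K_n(x) = \tau e^{x}/(1+\tau e^{x}) \le 1$, so
\[
\alpha^{K_n}_n = \frac{1}{\epsilon}\sup_{\epsilon_K \le x \le 2G(n)} \frac{1}{\sqrt{x}}\cdot\frac{\tau e^{x}}{1+\tau e^{x}} \le \frac{1}{\epsilon\sqrt{\epsilon_K}} = O(1).
\]
For $\beta^{K_n}_n$ the prefactor does not cancel: a direct computation gives $xK_n(x)/K_n'(x)^2 = n^{p}\,x\,(1+\tau e^{x})^{3}/(\tau^2 e^{2x})$, whose supremum over $0<x\le 2G(n)$ is attained at $x=2G(n)$ and equals $\Theta\!\left(n^{p}G(n)e^{2G(n)}\right)$. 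The key observation is that with $G(n)=\Theta(\sqrt{\log n})$ one has $e^{2G(n)} = n^{o(1)}$, so the exponential link contributes only a sub-polynomial factor and $\beta^{K_n}_n = n^{p+o(1)}$. This is precisely where logarithmic growth of $G$, rather than the polynomial growth forced by rectangular LPMs, rescues the otherwise fatal exponential decay.

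Next I would pin down the remaining quantities and substitute. The sparsity follows from Proposition~\ref{sparsegraphonissparse} with $s_n=n^{-p}$, giving $e(n)=n^2 s_n = n^{2-p}$. Since $\mathcal{C}_n ZZ^{T}\mathcal{C}_n$ shares its nonzero eigenvalues with the centered Gram matrix $Z^{T}\mathcal{C}_n Z \in \mathbb{R}^{d\times d}$, the law of large numbers (with a standard sample-covariance concentration bound) gives $n^{-1}Z^{T}\mathcal{C}_n Z \to \sigma^2 I_d$, so with probability tending to one $\lambda_1=\Theta(n)$ and $\lambda_d=\Theta(n)$, whence $\lambda_1\lambda_d^{-2}=\Theta(n^{-1})$. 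Feeding these into Theorem~\ref{consistencies}, condition~1 becomes $\alpha^{K_n}_n e(n)^{-1}n^{1.5}G(n) = O(n^{p-1/2+o(1)})$, condition~2 becomes $\beta^{K_n}_n\alpha^{K_n}_n n^{-0.5}G(n) = O(n^{p-1/2+o(1)})$, and condition~3 becomes $\beta^{K_n}_n\alpha^{K_n}_n n^{0.5}G(n)\lambda_1\lambda_d^{-2} = O(n^{p-1/2+o(1)})$. All three vanish exactly when $p<1/2$, giving learnability of link probabilities, squared distances, and latent positions. Since $e(n)=n^{2-p}$ with $p\in[0,1/2)$, the exponent $b=2-p$ ranges over $(1.5,2]$, which is the final claim.

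I expect the main obstacle to be the eigenvalue step: the conditions of Theorem~\ref{consistencies} are stated deterministically, yet $\lambda_1,\lambda_d$ are random, so one must show $\lambda_d$ is bounded below by a constant multiple of $n$ with high probability (so that $\lambda_d^{-2}$ cannot blow up) and then splice this high-probability event into the convergence-in-probability conclusion. The remaining bookkeeping around $e^{2G(n)}=n^{o(1)}$ is routine but must be handled carefully to confirm that the strict inequality $p<1/2$ is exactly what the argument needs.
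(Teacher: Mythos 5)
Your proposal is correct and follows essentially the same route as the paper's own proof: regularity with $G(n)=\Theta(\sqrt{\log n})$ via Lemma~\ref{exchangeableareregular}, eigenvalues $\lambda_1,\lambda_d = \Theta_p(n\sigma^2)$ (the content of Lemma~\ref{gaussianeigen}, which you re-derive via covariance concentration), and then Theorem~\ref{consistencies}, with the decisive point in both arguments being that the $n^{-p}$ factor cancels in $\alpha^{K_n}_n$ but survives in $\beta^{K_n}_n$ while $e^{2G(n)}=n^{o(1)}$, forcing $p<1/2$. Your write-up simply makes explicit the $\alpha$, $\beta$, and $e(n)$ computations that the paper leaves implicit when it invokes the theorem.
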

\begin{proof}
Proof provided in \S \ref{learnablesparsegraphonproof}
\end{proof}
As such, many sparse graphon-based LPMs achieve learnability under the same sparsity rate derived for rectangular LPMs in Corollary~\ref{rectangularlearnability}. Additionally, learnability can be established for link probability functions with lighter tails, as well as for latent spaces of arbitrary dimension $d$. These findings suggest a potential trade-off between projectivity and learnability under lighter-tailed link probability functions.

Despite these advantages, there are practical ramifications of sparse graphon-based LPMs that limit their applicability as statistical models for a network. To start, the resultant sparse network sequences are not projective.
\begin{prop}\label{notprojective}
Sparse-graphon latent position models do not define a projective sequence of models if $(s_n)_{n=1\ldots \infty}$ is not constant.
\end{prop}
\begin{proof}
Proof provided in \S \ref{notprojectiveproof}.
\end{proof}
As noted in \S~\ref{sec:projectivity}, inferences drawn using non-projective network models can be difficult to interpret, especially when the statistical application requires super-population inference \citep{DAmour-Airoldi-misspecification}. As such, extra care must be taken to ensure sparse graphon-based inferences are reliable for a given application.

In the specific context of LPMs, another ramification stems from the particular way the non-projective link function $K_n(x) = \text{min}(\left\{s_nK(x), 1\right\})$ is defined. In particular, consider the probability of edge transitivity in sparse graphon-based LPMs as $n$ increases. Edge transitivity---that is, the extra tendency for two nodes in a network to be connected given a shared neighbor---is one of the main selling points identified by \cite{Hoff-Raftery-Handcock} in their initial proposal of the LPM was as a useful statistical model. Notably, the triangle inequality for distances combines with the strictly decreasing LPM link probability function to promote transitivity in virtually all commonly-used exchangeable LPMs. Somewhat surprisingly, however, is the fact that this fact does not hold for sparse graphon-based versions of popular LPMs. Under fairly general conditions, the conditional probability of two nodes being connected given a shared neighbor declines to zero as $n$ grows. We formally state this result as Theorem~\ref{triangles}.
\begin{theorem}\label{triangles}
Consider a sparse graphon-based latent position model on the latent space $\mathbb{R}^d$ equipped with the Euclidean distance. Suppose that the sequence of link functions is given by $K_n(\delta) = \text{min}\left(\left\{s_nK(\delta), 1\right\} \right)$ where $(s_n)_{n \in \mathbb{N}}$ is a non-increasing sequence with a limit of 0 (i.e. the resultant LPM is sparse), and $K(\delta)$ is a non-negative, continuous, strictly decreasing function satisfying $\mathbb{E}(K(\rho(Z_i, Z_j)) K(\rho(Z_i, Z_k)) K(\rho(Z_j, Z_k)) ) < \infty$ for $Z_i, Z_j, Z_k \sim f$. Under these conditions, the resultant LPM will satisfy
\begin{align}
\PP(Y^n_{ij} =  1\mid Y^n_{ik}=1, Y^n_{jk} = 1) \rightarrow_p 0 
\end{align}
as $n \rightarrow \infty$ for any arbitrarily chosen node indices $(i,j,k)$. That is, the probability of edge transitivity will go to 0 as the number of nodes goes to infinity.
\end{theorem}
\begin{proof}
Proof provided in \S \ref{prooftriangles}.
\end{proof}

The conditions required for Theorem~\ref{triangles} are quite general. Notably,
\begin{align}
\mathbb{E}(K(\rho(Z_i, Z_j)) K(\rho(Z_i, Z_k)) K(\rho(Z_j, Z_k)) ) < \infty
\end{align}
is guaranteed to hold for any bounded function $K$, such as the standard choices expit($-\rho^2$), $\exp{(-\rho^2)}$ and $(1 + \rho^2)^{-1}$, as well as many choices of unbounded $K$. For this reason, it may be undesirable to consider sparse-graphon based LPMs to model real-world networks in which edge transitivity is expected to be present, at least for standard link functions.

In contrast, the sparse LPMs presented in Section~\ref{sec:sparse-latent-space} exhibit nonzero probabilities of edge transitivity as $n$ increases under standard assumptions on $K$. We establish this fact as Theorem~\ref{triangles2}.

\begin{theorem}\label{triangles2}
Let $\pi$ be a permutation on $(1, \ldots, n)$ chosen uniformly at random from the set of permutations on $(1, \ldots, n)$ for $n \in \mathbb{N}$. A $d$-dimensional regular rectangular latent position network model will satisfy
\begin{align}
 \lim_{n \rightarrow \infty} \PP(Y^n_{\pi(i)\pi(j)} =  1\mid Y^n_{\pi(i) \pi(k)}=1, Y^n_{\pi(j)\pi(k)} = 1) \neq 0.
\end{align}
That is, the probability of edge transitivity does not goes to 0 as the number of nodes goes to infinity.
\end{theorem}
\begin{proof}
Proof provided in \S \ref{prooftriangles2}.
\end{proof}
As such, our projective sparse LPMs are more suitable for modeling networks where edge transitivity is expected to be present. Seeing as edge transitivity is a primary selling point of LPMs, we argue that our LPMs are thus more suitable in most practical applications.

Finally, It is also worth acknowledging that the sparse graph representation of \citet{Bollobas-Janson-Riordan-Phase} is more general than the sparse graphon
representation described above. It allows for latent variables assigned defined through a point process rather than generated independently
from the same distribution. For LPMs, this set-up equates to the traditional random connection model (\S \ref{prc}).

\subsection{Comparison with the Graphex Framework}\label{graphex}

Beyond the random connection model \citep{Meester-Roy-continuum-percolation},
there has been a recent renewed interest in using point processes to define
networks. This was primarily spurned by the developments in
\citet{Caron-Bayesian-bipartite} and \citet{Caron-Fox-sparse-graphs} in which
they propose a new graph framework---based on point processes---for infinitely
exchangeable and sparse networks. This approach was generalized as the graphex framework in
\citet{Veitch-Roy-graphex-class}.  Other
variants and extensions of this work include
\citet{Borgs-Chayes-Holden-sparse-exchangeable, Herlau-Schmidt-Morup,
  Palla-Caron-Teh, Todeschini-Miscouridou-Caron}.

In the graphex framework, a graph is defined by a homogeneous Poisson process
on an augmented space $\mathbb{R}_+ \times \mathbb{R}_+$, with the points
representing nodes. The two instances of $\mathbb{R}_+$ play the roles of the
parameter space and the auxiliary space. The parameter space determines the
connectivity of nodes through a function $W:\mathbb{R}_+^2 \rightarrow
[0,1]$. Connectivity is independent of the auxiliary dimension $\mathbb{R}_+$ that
determines the order in which the nodes are observed. Clearly, our sparse LPM set-up shares
many similarities with the graphex framework. Both assign latent variables to nodes according to a
homogeneous Poisson process defined on a space composed of a parameter space to influence connectivity and an auxiliary space to influence order of node arrival. The graphex is defined in terms of a one-dimensional parameter space, but it
can be equivalently expressed as a multi-dimensional parameter space as we do for the sparse LPM.
The link probability function $K$ for the sparse LPM depends solely on the distance between points, but it would be straightforward to extend to the more general set-up for $W$ as in the graphex. However, it would take additional work to determine the sparsity levels and learnability properties
   of such graphs.
 
The major difference between our framework and the graphex framework is how a finite subgraph
is observed. To observe a finite graphex-based graph, one restricts the point
process to a window $\mathbb{R}_+ \times [0, \nu]$. Here, the restriction is
limited to the auxiliary space, with the parameter space remaining
unrestricted. This alone is not enough to lead to a finite graph, as a unit
rate Poisson process on $\mathbb{R}_+ \times [0,\nu]$ still has an infinite
number of points almost-surely. To compensate, an additional criterion for node
visibility is included. A node is visible only if it has at least one neighbor. For some choices of $W$, this results in a finite number of
visible nodes for a finite $\nu$.  \citet{Veitch-Roy-graphex-class} show that
the expected number of nodes $n_{\nu}$ and edges $e_{\nu}$ are given by
\begin{align}
\mathbb{E}\left(n_{\nu} \right) &= \nu \int_{0}^{\infty}1 - \exp{\left( - \nu \int_{\mathbb{R}_+} W(x,y) \text{d}y \right)} \text{d}x,\label{graphexes1} \\ 
\mathbb{E}\left(e_{\nu} \right) &= \frac{1}{2} \nu^2 \int_{0}^{\infty}\int_0^{\infty} W(x,y) \text{d}x \text{d}y \label{graphexes2}
\end{align}
respectively. Thus, the degree of sparsity in the graph is controlled through
the definition of $W$. Clearly, for a finite-node restriction to be defined,
the two dimensional integral over $W$ in (\ref{graphexes2}) must be finite. Otherwise, the number of
nodes is infinite for any $\nu$.

A sparse graphex-based LPM cannot be implemented in the naive
manner because, if $W$ is solely a function of distance between nodes, the two
dimensional integral (\ref{graphexes2}) is infinite. One modification to prevent this to
modify $W$ to have bounded support, e.g.
$W(x,y) = K(|x-y|) I(0 \leq x,y \leq C)$. However, this
framework is equivalent to the graphon framework and results in dense graphs
\citep{Veitch-Roy-graphex-class}. It does not define a sparse LPM.

Alternatively, we could relax the graphex such that latent positions are generated according to an inhomogeneous point process over the parameter sparse. This can be done though the definition of $W$. For instance, consider
\begin{align}
W(x,y) &= K\left(|\exp{\left(x \right)} - \exp{\left(y \right)}|\right).
\end{align}
with $K$ being the link probability function as defined in the traditional LPM. In this set-up, $W$ can be viewed as the composition of two operations. First, an exponential transformation is applied to the latent positions resulting in an inhomogeneous rate function given by $f(x)= 1/(1+x)$. Then, we proceed as if it were a traditional LPM in this new space, connecting the nodes according to $K$ on their transformed latent positions. Finally, the isolated nodes are discarded. This approach defines a sparse and projective latent position network model, with the level of sparsity controlled by $K$. Though (\ref{graphexes1}) and (\ref{graphexes2}) provide a means with which to calculate the sparsity level, these expressions do not yield analytic solutions for most $K$. As a result, the graphex framework is far more difficult to work with when defining sparse LPMs; they lack the straightforward control over the
level of sparsity provided by the growth function $g(t)$ in rectangular LPMs.

Furthermore, it is difficult to apply the tools derived in Theorem~\ref{consistencies} to establish learnability for graphex-based LPMs. The difficulty stems from the fact that regularity requires a probability bound on the maximum of the distances between the origin and the first $n$ observed nodes. That is, we need a bound on $\max_{i \leq n} \|X_i\|$  where $X_1 \ldots, X_n$ denote the latent positions of the first $n$ observed nodes. Because of the irregular sampling scheme in which isolated nodes are discarded, it is difficult to establish such a bound for the graphex. Furthermore, any such bound is usually large due to the fact the latent positions at any $n$ are generated according to an improper distribution. For this reason, whether or not such graphex-based LPMs are learnable is an open problem.



\subsection{Remarks}\label{conclusion}

We have established a new framework for sparse and projective latent position models that enables straightforward control the level of sparsity. The sparsity is a result of assuming the latent positions of nodes are a realization of a Poisson point process on an augmented space, and that the growing sequence of graphs is obtained by restricting observable nodes to those with positions in a growing sequence of nested observation windows.

The notion of projectivity we consider here is slightly weaker than the one usually considered in the literature (e.g. \citet{your-favorite-ergm-sucks}). Our definition
requires consistency under marginalization of the most recently arrived node, rather than consistency under marginalization of any node. We do not consider this to be a major limitation---if the entire sequence of graphs were observed, the order of the nodes would be apparent.

In practice, only a single network of finite size is available when conducting inference. However, in these cases the order of nodes is not required---we make no use of it when defining the maximum likelihood estimator. A finite observation from our new sparse LPM is equivalent to finite observation from an equivalent exchangeable LPM with $f$ given by the shape of $H(n_t)$. This follows from Lemma~\ref{restriction} which indicates that the distribution of latent positions can be viewed as iid after conditioning on the number of nodes and randomly permuting the ordering. This means that the analysis and inference tools developed for exchangeable LPMs extend immediately to our approach when analyzing a single, finite network. From this viewpoint, we have merely proposed a different asymptotic regime for studying the same classes of models available under the exchangeability assumption. 

Theorem~\ref{consistencies} provides some consistency results under this asymptotic regime. However, the rates of learnability we achieved are upper bounds---the inequalities in Lemmas \ref{consistentprob}-\ref{consistentpositions} are not necessarily tight. They are derived to hold even for the worse-case regular LPMs regardless of how the latent positions are generated. We demonstrate in Theorem~\ref{negative} (\S \ref{negatives}) that there are some classes regular LPMs for which it is impossible to learn the latent positions. This class of models includes any regular LPMs with $G(n) = n^{p/d}$ and $K$ exponentially decreasing. In these cases, it is possible for the LPM to result in graphs which are disconnected with probability trending to one by clustering the latent positions at two extreme points of the space.

Though the regularity criteria technically allow for such instances by placing no assumptions on the distribution of $Z$ besides bounded norms, these clusters arise with vanishing probability when the latent positions are assumed to follow a homogeneous Poisson process such as in rectangular LPMs. For this reason, a future research direction to explore is to establish better learnability rates for rectangular LPMs by tightening the bounds Lemmas \ref{consistentprob}-\ref{consistentpositions} through assumptions on the distribution of the latent positions.

\appendix

\section{Proofs of Results and Supporting Lemmas}\label{A}

\subsection{Intermediary Results}\label{intermed}

The following are useful lemmas toward establishing the main results in this article.

\begin{lemma}\label{restriction}
  Restriction Theorem in \citet[p.\
17]{Kingman-Poisson} Let $\Lambda$ be a Poisson process with mean measure $\mu$ on $S$, and let
  $S_1$ be a measurable subset of $S$. Then the random countable set
  \begin{equation}
    \Lambda_1 = \Lambda \cap S_1
  \end{equation}
  can be regarded as a Poisson process on $S$ with mean measure
  \begin{equation}
    \mu_1 (A) = \mu(A \cap S_1)
  \end{equation}
  or as a Poisson process on $S_1$ possessing a mean measure that is the restriction of
  $\mu$ to $S_1$.
\end{lemma}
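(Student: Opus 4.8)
The plan is to verify directly that the restricted set $\Lambda_1 = \Lambda \cap S_1$ satisfies the two defining axioms of a Poisson process, leveraging the fact that $\Lambda$ itself does. Recall that $\Lambda$ is a Poisson process with mean measure $\mu$ exactly when (i) for every measurable $A$ the count $N(A) := |\Lambda \cap A|$ is Poisson distributed with mean $\mu(A)$ (read as $N(A) = \infty$ almost surely when $\mu(A) = \infty$), and (ii) for any finite family of pairwise disjoint measurable sets $A_1, \ldots, A_k$ the counts $N(A_1), \ldots, N(A_k)$ are mutually independent. Since this is precisely the classical Restriction Theorem of \citet[p.\ 17]{Kingman-Poisson}, one could simply cite it; I sketch the short argument for completeness.

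Both steps hinge on a single set-theoretic identity. For any measurable $A \subseteq S$,
\begin{equation}
N_1(A) := |\Lambda_1 \cap A| = |\Lambda \cap S_1 \cap A| = N(A \cap S_1).
\end{equation}
First I would use this to establish axiom (i): applying property (i) of $\Lambda$ to the measurable set $A \cap S_1$ shows $N_1(A)$ is Poisson with mean $\mu(A \cap S_1)$, which is $\mu_1(A)$ by definition. Next I would establish axiom (ii): given pairwise disjoint $A_1, \ldots, A_k$, the sets $A_i \cap S_1$ remain pairwise disjoint, so the identity together with property (ii) of $\Lambda$ gives mutual independence of the $N_1(A_i)$. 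These two steps confirm that $\Lambda_1$ is a Poisson process on $S$ with mean measure $\mu_1$.

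Finally I would reconcile the two stated viewpoints. Because $\mu_1(S \setminus S_1) = \mu(\emptyset) = 0$, the process $\Lambda_1$ almost surely has no points outside $S_1$, so it may equivalently be regarded as a Poisson process living on $S_1$ whose mean measure is the restriction of $\mu$ to $S_1$. I do not expect a genuine obstacle here, since everything collapses to the displayed identity plus the axioms for $\Lambda$; the only points requiring care are the handling of the $\mu(A) = \infty$ case under the Poisson convention and stating the equivalence of the two descriptions precisely.
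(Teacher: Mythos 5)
Your proposal is correct. Note, however, that the paper does not prove this lemma at all: it is imported verbatim as the Restriction Theorem of \citet[p.~17]{Kingman-Poisson} and used as a black box (in Lemmas~\ref{prop:unif} and \ref{prop:indunif}), so there is no internal proof to compare against — your observation that ``one could simply cite it'' is exactly what the authors do. What your argument adds is a self-contained verification, and it is the standard one: the set identity $N_1(A) = N(A \cap S_1)$ transfers both defining axioms (Poisson-distributed counts with the correct mean, including the $\mu(A)=\infty$ convention, and independence over disjoint sets, since intersecting with $S_1$ preserves disjointness) from $\Lambda$ to $\Lambda_1$, and the reconciliation of the two viewpoints via $\mu_1(S \setminus S_1) = 0$ is handled correctly. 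The only cosmetic caveat is that Kingman's definition of a Poisson process also carries background hypotheses on the mean measure (e.g.\ nonatomicity/$\sigma$-finiteness in his setting); these are trivially inherited by the restriction $\mu_1$, so nothing breaks, but a fully rigorous write-up would note it. In short: the citation buys economy, your verification buys self-containedness, and both rest on the same elementary argument.
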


\begin{lemma}\label{prop:unif}
  For a rectangular LPM, the number of nodes which are
  visible at time $t$ is Poisson distributed with mean $t$.
\end{lemma}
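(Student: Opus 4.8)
The plan is to reduce the statement to a direct application of the Restriction Theorem together with the defining counting property of a Poisson process. By definition, the number of nodes visible at time $t$ is the cardinality $N(t) = |\Psi \cap H(t)|$ of the points of the unit-rate process $\Psi$ that fall inside the observation window $H(t)$. Since $\Psi$ is unit-rate on $(S^*, \mathcal{S}^*, \lambda)$, its mean measure is simply $\lambda = \ell_1 \times \ell_2$.

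First I would invoke Lemma~\ref{restriction} with $\Lambda = \Psi$, mean measure $\mu = \lambda$, and $S_1 = H(t)$. This yields that $\Psi \cap H(t)$ is itself a Poisson process whose mean measure is the restriction of $\lambda$ to $H(t)$. The defining property of a Poisson process then gives that the number of points it contains, $N(t)$, is Poisson distributed with parameter equal to the total mass $\lambda(H(t)) = |H(t)|$.

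It then remains to compute this mass. Since $H(t) = [-g(t), g(t)]^d \times [0, t/(2g(t))^d]$ is a product set and $\lambda$ is the product of Lebesgue measures, its volume factorizes as
\begin{align}
\lambda(H(t)) = (2g(t))^d \cdot \frac{t}{(2g(t))^d} = t,
\end{align}
where the first factor is the $d$-dimensional volume of the cube $[-g(t), g(t)]^d$ and the second is the length of the auxiliary interval. Hence $N(t) \sim \text{Poisson}(t)$, as claimed.

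The argument is essentially immediate once the correct objects are identified; there is no genuine obstacle beyond bookkeeping. The only point that requires care is recognizing that the auxiliary height $t/(2g(t))^d$ is defined precisely so that the volume of $H(t)$ equals $t$ for every $t$. Note in particular that the specific form $g(t) = t^{p/d}$ plays no role in this computation, so the conclusion in fact holds for any admissible choice of the growth function $g$.
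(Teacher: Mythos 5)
Your proposal is correct and follows essentially the same route as the paper's proof: both invoke the Restriction Theorem (Lemma~\ref{restriction}) to conclude that $\Psi \cap H(t)$ is a unit-rate Poisson process on $H(t)$, so the node count is Poisson with mean equal to the volume of $H(t)$, which is $t$. Your only addition is spelling out the volume factorization $(2g(t))^d \cdot t/(2g(t))^d = t$ explicitly, which the paper leaves implicit in the definition of the rectangular window.
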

\begin{proof} According to Lemma~\ref{restriction}, the latent positions of nodes visible at
  time $t$ follow a unit-rate Poisson process over $H(t)$. Therefore, the
  number of nodes is Poisson distributed with expectation equal to the volume
  of $H(t)$, which is $t$.
\end{proof}

\begin{lemma}\label{waitingtimes}
Let $t_n$ denote the arrival time of the $n$th node in a sparse LPM. Then, $t_n \sim $ Gamma($n$, 1) if $H(t)$ has volume $t$. Moreover, $t_n/n \convas 1$.
\end{lemma}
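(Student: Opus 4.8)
The plan is to compute the distribution of $t_n$ directly from its cumulative distribution function, using the Poisson count established in Lemma~\ref{prop:unif}. Let $N(t) = |\Psi \cap H(t)|$ denote the number of visible nodes at time $t$. Because the windows are nested ($t_1 < t_2 \Rightarrow H(t_1) \subset H(t_2)$), the count $N(t)$ is non-decreasing in $t$, so the $n$th node has arrived by time $t$ exactly when at least $n$ points lie in $H(t)$; that is, the events $\{t_n \leq t\}$ and $\{N(t) \geq n\}$ coincide.

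By Lemma~\ref{prop:unif}, $N(t) \sim \text{Poisson}(t)$ since $|H(t)| = t$. Hence
\begin{align}
\PP(t_n \leq t) = \PP(N(t) \geq n) = \sum_{k=n}^{\infty} e^{-t}\frac{t^k}{k!}.
\end{align}
Differentiating this Poisson survival function in $t$ produces a telescoping sum whose only surviving term is $e^{-t} t^{n-1}/(n-1)!$, which is exactly the Gamma$(n,1)$ density. This identifies the law of $t_n$ as Gamma$(n,1)$.

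Alternatively, and more structurally, one can note that the nesting of $H$ together with $|H(t)| = t$ and the independence of a Poisson process over disjoint regions (Lemma~\ref{restriction}) makes $N(\cdot)$ a homogeneous unit-rate Poisson counting process indexed by $t$: the increment $N(t_2) - N(t_1)$ counts points of $\Psi$ in the disjoint region $H(t_2)\setminus H(t_1)$ of volume $t_2 - t_1$, so it is $\text{Poisson}(t_2 - t_1)$ and independent of $N(t_1)$. The arrival time $t_n$ is then the sum of $n$ i.i.d.\ Exponential$(1)$ inter-arrival times, which is Gamma$(n,1)$ by the classical theory of Poisson processes.

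The only point requiring care is the event identity $\{t_n \leq t\} = \{N(t) \geq n\}$, which relies on $N$ being non-decreasing---a direct consequence of the nesting assumption on $H$---and on the interpretation of $t_n$ as the first time the count reaches level $n$. Once this is in place, the result follows either from a one-line differentiation of the Poisson tail or from the standard inter-arrival-time characterization; I expect no substantial obstacle beyond this bookkeeping.
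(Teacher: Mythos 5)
Your proof is correct, but your primary argument takes a genuinely different route from the paper's. The paper's proof establishes that $n_t = |\Psi \cap H(t)|$ is a one-dimensional homogeneous unit-rate Poisson process, views $t_n$ as a stopping time, and invokes the strong Markov property to conclude that the inter-arrival times $t_n - t_{n-1}$ are i.i.d.\ Exponential$(1)$, so that $t_n$ is a sum of $n$ such variables---this is precisely your ``alternative'' structural argument. Your main argument instead computes the law of $t_n$ directly from the event identity $\{t_n \leq t\} = \{N(t) \geq n\}$ and differentiates the Poisson tail $\sum_{k \geq n} e^{-t}t^k/k!$ to obtain the Gamma$(n,1)$ density. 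This route is more elementary: it needs only the marginal law $N(t) \sim \mathrm{Poisson}(t)$ together with monotonicity of $N$, and never requires independence of increments or the strong Markov property. What the paper's route buys in exchange is a by-product that the paper reuses later: the proof of Lemma~\ref{uniformZ} explicitly relies on the fact, established inside the paper's proof of this lemma, that the inter-arrival times $w_i = t_i - t_{i-1}$ are exponentially distributed; your tail-differentiation argument does not deliver that. Two minor points to tidy up. First, citing Lemma~\ref{prop:unif} is slightly off, since that lemma is stated only for rectangular LPMs; in the general sparse LPM setting of this lemma you should instead invoke Lemma~\ref{restriction} together with $|H(t)| = t$ (exactly the argument in that lemma's proof). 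Second, the term-by-term differentiation of the tail series and the almost-sure validity of the event identity (which uses right-continuity of $N$, itself a consequence of $\bigl|\bigcap_{s > t} H(s)\bigr| = t$ so that the limiting set difference is null) deserve the one-line justifications you only gesture at.
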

\begin{proof}
Let $n_t = |\Psi \cap H(t)|$ where $\Psi$ denotes the unit rate Poisson process of latent positions. Then, it is straightforward to verify that $n_t$ follows a one-dimensional homogeneous Poisson process on the positive real line. Note that $t_n$ can be equivalently expressed as 
\begin{align}
t_n = \inf\left\{t \geq 0: |\Psi \cap H(t) | = n\right\}.
\end{align}
That is, $t_n$ is the index of the smallest observation window containing $n$ nodes for all positive integers $n$. Under this perspective, $t_n$ can be viewed as a stopping time of $n_t$. It is well-known that $t_1$, the first arrival time of a unit-rate Poisson process, follows an exponential distribution with rate 1. Then, by the strong Markov property of Poisson processes $t_n - t_{n-1}$ is identical in distribution to $t_1$. Thus, $t_n$ is equivalent to the sum of $n$ independent exponential distributions, meaning it follows Gamma($n$, 1). The fact that $t_n/n \overset{a.s.}{\to} 1$ follows from the strong law of large numbers because $t_n$ is the sum of $n$ independent exponential random variables with mean one.
\end{proof}

\begin{lemma}\label{prop:indunif}
Consider a sparse rectangular LPM. Let $z$ denote the latent position of a node chosen uniformly at random of the nodes visible at time $t$. Then $z$ follows a uniform distribution over $[-g(t) , g(t)]^d$.
\end{lemma}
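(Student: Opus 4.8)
The plan is to combine the Restriction Theorem with the elementary conditional-uniformity property of Poisson processes, and then to exploit the product structure of the rectangular observation window. First I would invoke Lemma~\ref{restriction}: since the full point process $\Psi$ is a unit-rate (Lebesgue) Poisson process on $S^* = \mathbb{R}^d \times \mathbb{R}_+$, the points visible at time $t$---namely those falling in $H(t)$---form a Poisson process on $H(t)$ whose mean measure is Lebesgue measure restricted to $H(t)$. This restricted mean measure is finite, with total mass equal to the volume $|H(t)| = t$.

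Next I would apply the standard fact that, conditional on a Poisson process with finite mean measure $\mu$ having exactly $n$ points, those $n$ points are independent and identically distributed according to the normalized measure $\mu(\cdot)/\mu(S^*)$ \citep[p.\ 27]{Kingman-Poisson}. Here the normalized measure is precisely the uniform distribution on $H(t)$, since normalizing Lebesgue measure restricted to $H(t)$ by its total mass $t$ yields the uniform law on $H(t)$. Hence, conditional on there being $n \geq 1$ visible nodes, the full coordinate pairs $(Z^i, r^i)$ of these nodes are $n$ i.i.d.\ uniform draws from $H(t)$. Choosing one node uniformly at random therefore produces a coordinate pair $(z, r)$ that is uniformly distributed over $H(t)$.

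Finally, I would use that for a rectangular LPM the window factorizes as $H(t) = [-g(t), g(t)]^d \times [0, t/(2g(t))^d]$. The uniform distribution on a Cartesian product of sets is the product of the uniform distributions on the factors, so marginalizing out the auxiliary coordinate $r$ leaves the latent position $z$ uniformly distributed over $[-g(t), g(t)]^d$, which is the claim.

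I do not anticipate any genuine obstacle here; the only point requiring care is the bookkeeping around conditioning on the \emph{random} number of visible nodes. This is handled by observing that the per-node uniformity holds for every fixed $n \geq 1$, so the conclusion is unchanged under mixing over the number of visible nodes conditioned on at least one being present, and thus holds for a node drawn uniformly at random among those visible at time $t$.
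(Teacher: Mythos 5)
Your proposal is correct and follows essentially the same route as the paper's proof: apply the Restriction Theorem (Lemma~\ref{restriction}) to get a Poisson process on $H(t)$, conclude that a visible node's pair $(z,r)$ is uniform on $H(t)$, and marginalize out the auxiliary coordinate using the product structure of the rectangular window. Your explicit treatment of the conditional-uniformity-given-the-count step and the mixing over the random number of visible nodes is a careful spelling-out of what the paper leaves implicit, but it is the same argument.
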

\begin{proof}
If a node is visible at time $t$, its latent position and auxiliary coordinate pair $(z,r)$ are a point in a unit-rate Poisson process restricted H(t). By Lemma~\ref{restriction}, this point process is a Poisson process with unit rate over the restricted space. Thus, if a node is visible at time $(z, r)$, it is uniformly distributed over $H(t) = [-g(t),g(t)]^d \times [0,t/(2g(t))^d]$. Marginalizing $r$ provides the result.
\end{proof}

\begin{lemma}\label{distanceswitch}
Let $K$ be a decreasing non-negative function such that
\begin{align}
0<\int_0^{\infty} r^{d-1}K(r) \text{dr} < \infty, \label{thisispositive}
\end{align}
for some $d \in \mathbb{Z}_+$. Then,
\begin{align}
0< \int_{y \in [-B,B]^d} K(\|x - y\|) \text{d}y < \infty
\end{align}
for any $B\in \mathbb{R}_+$.
\end{lemma}
\begin{proof}
Note that for all decreasing positive functions $K$, the function
\begin{align}
R(x) = \int_{y \in [-B,B]^d} K(\|y-x\|) \text{d}y 
\end{align}
is maximized when $x$ is at the origin. Thus, for all $x \in \mathbb{R}^d$,
\begin{align}
\int_{y \in [-B,B]^d} K(\|y-x\|) \text{d}y &\leq  \int_{y \in [-B,B]^d} K(\|y\|) \text{d}y\\
                                                           &\leq \int_{y \in \mathbb{R}^d:\|y\| < \sqrt{d} B} K(\|y\|) \text{d}y\\
                                                           &\propto \int_{0}^{\sqrt{d} B} r^{d-1} K(r) \text{d}r\\
                                                           &< \infty.
\end{align}
The positivity follows from $K$ being non-negative and the positivity of the expression in (\ref{thisispositive}).
\end{proof}

\begin{lemma}\label{uniformZ}
Consider a rectangular LPM, with $t_i$ denoting the arrival time of the $i$th node. Let $\pi$ denote permutation chosen uniformly at random from all permutations on $\left\{1,\ldots, n-1\right\}$. Then, conditional on $t_n = T$, each $t_{\pi(i)}$'s marginal distribution is uniform on $[0, T]$ for $i=1,\ldots, n-1$. Consequently, the latent position $Z^{\pi(i)}$ of node $\pi(i)$ is uniformly distributed on $[-g(T), g(T)]^d$. 
\end{lemma}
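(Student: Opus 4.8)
The plan is to reduce both claims to the standard conditional-uniformity property of a one-dimensional Poisson process and then to read off the latent positions from the product structure of $H(T)$. First I would handle the arrival-time claim. As in the proof of Lemma~\ref{waitingtimes}, the counting process $n_t = |\Psi \cap H(t)|$ is a unit-rate Poisson process on $\mathbb{R}_+$ whose points are exactly the arrival times $t_1 < t_2 < \cdots$. Since the inter-arrival times are i.i.d. Exponential$(1)$ with unit Jacobian, the joint density of $(t_1, \ldots, t_n)$ is $e^{-t_n}$ on the ordered simplex $0 < t_1 < \cdots < t_n$. Dividing by the Gamma$(n,1)$ density of $t_n$ supplied by Lemma~\ref{waitingtimes}, the conditional density of $(t_1, \ldots, t_{n-1})$ given $t_n = T$ is the constant $(n-1)!/T^{n-1}$ on $0 < t_1 < \cdots < t_{n-1} < T$, which is precisely the density of the order statistics of $n-1$ i.i.d. Uniform$[0,T]$ variables.

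Next I would apply the random permutation. Applying a uniformly random permutation $\pi$ to a vector of order statistics returns an i.i.d. sample, so conditional on $t_n = T$ the vector $(t_{\pi(1)}, \ldots, t_{\pi(n-1)})$ is i.i.d. Uniform$[0,T]$; in particular each $t_{\pi(i)}$ has Uniform$[0,T]$ marginal, establishing the first claim.

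For the latent positions I would use the spatial picture rather than conditioning further on the arrival time. Conditional on $t_n = T$, nodes $1, \ldots, n-1$ are exactly the points of $\Psi$ lying in $H(T)$; by the restriction theorem (Lemma~\ref{restriction}) together with the standard fact that a Poisson process conditioned on its count is i.i.d. uniform on its domain, these $n-1$ points are i.i.d. uniform over $H(T) = [-g(T), g(T)]^d \times [0, T/(2g(T))^d]$, independently of the boundary point that defines $t_n = T$. Since $\pi(i)$ is a uniformly chosen label, node $\pi(i)$ is one such uniform point, and marginalizing out its auxiliary coordinate leaves its latent position uniform on $[-g(T), g(T)]^d$; this is exactly Lemma~\ref{prop:indunif} applied at time $T$.

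The delicate points, which I expect to be the main obstacle, both lie at the interface of these two descriptions. First, one must justify the conditional uniformity of the interior points given the measure-zero event $\{t_n = T\}$; I would treat this as a routine regular-conditional-density argument, using that $\Psi$ restricted to $H(T)$ and $\Psi$ on the disjoint boundary region are independent. Second, one should resist proving the position claim by conditioning $Z^{\pi(i)}$ on the exact value $t_{\pi(i)} = s$: the arrival time of a point equals the larger of the time at which the spatial window $[-g(t), g(t)]^d$ first contains it and the time at which the auxiliary window $[0, t/(2g(t))^d]$ first contains it, so position and arrival time are dependent and the conditional law is \emph{not} uniform on the smaller cube $[-g(s), g(s)]^d$. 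Accordingly, I would read the word ``consequently'' as the statement that the uniform arrival-time law and the uniform position law are both marginals of the single joint law of a uniform point on $H(T)$.
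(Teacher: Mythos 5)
Your proposal is correct, and its first half is essentially the paper's own argument: the paper likewise derives the conditional density $f(t_1,\ldots,t_{n-1}\mid t_n = T) \propto I(0 \le t_1 \le \cdots \le t_{n-1} \le T)$ from the exponential inter-arrival times (your explicit computation of the joint density $e^{-t_n}$ and division by the Gamma$(n,1)$ density just makes the same step more concrete), and then applies the random permutation to get the Uniform$[0,T]$ marginals. Where you genuinely diverge is the latent-position claim. The paper stays in the time domain: from uniformity of $t_{\pi(i)}$ it deduces $\mathbb{P}\left((Z^{\pi(i)}, r^{\pi(i)}) \in H(a)\right) = a/T$ for all $0 \le a \le T$ and then asserts uniformity on $H(T)$; that final ``it follows'' implicitly uses that the position is also uniformly distributed \emph{within} each boundary shell $\partial H(a)$ (a consequence of the Poisson symmetry that the paper does not spell out), since knowing the masses of the nested windows alone does not determine the law. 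You instead work in the space domain, invoking the restriction theorem (Lemma~\ref{restriction}) and the conditional i.i.d.-uniformity of a Poisson process given its point count, so that the uniform arrival times and uniform positions are read off as two marginals of one uniform law on $H(T)$ — exactly the reading of ``consequently'' you advocate, and consistent with Lemma~\ref{prop:indunif}. Your route buys a cleaner justification of the step the paper leaves implicit, at the price of having to construct the regular conditional distribution given the measure-zero event $\{t_n = T\}$, which the paper gets for free by working with arrival-time densities; you correctly identify this trade-off, and your warning that conditioning on the exact value $t_{\pi(i)} = s$ would concentrate the position on the shell $\partial H(s)$ rather than the cube $[-g(s),g(s)]^d$ is a real pitfall that neither proof falls into but only yours makes explicit.
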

\begin{proof}
Let $(w_i)_{i=1,\ldots,n}$ denote the inter-arrival of times of the nodes. That is, $w_1 = t_1$ and $w_i = t_i - t_{i-1}$. As argued in the proof of Lemma~\ref{waitingtimes}, each $w_i$ is exponentially distributed. Thus, the density of $t_1, \ldots, t_{n-1}$ given $t_n = T$ satisfies:
\begin{align}
f(t_1, \ldots, t_{n-1}|t_n = T) \propto I(0 \leq t_1 \leq t_2 \leq \cdots \leq t_{n-1} \leq t_{n})
\end{align}
which is the same density as the order statistics of a uniform distribution on $[0,T]$. Thus, a randomly chosen waiting time $t_{\pi(i)}$ is uniformly distributed on $[0,T]$. Let $r^{\pi(i)}$ denote the auxiliary coordinate of node $\pi(i)$. It follows that $\mathbb{P}((Z^{\pi(i)}, r^{\pi(i)}) \in [-g(a), g(a)]^d \times [0,a/g(a)^d]) = a/T$ for all $0 \leq a \leq T$. It follows that $Z^{\pi(i)}$ is uniformly distributed on $[-g(T), g(T)]^d$.
\end{proof}

\begin{lemma} \label{windowdist}
Consider a rectangular sparse LPM model restricted to $H(t_n)$ such that $n$ nodes are visible. Let $\left\{Z_1, \ldots, Z_n \right\}$ denote the latent positions of these nodes. Let $\delta^{(n)} = \max_{i=1,\ldots n} \|Z_i\|$ denote the largest Euclidean distance between a visible node's latent position and the origin. Then,
\begin{align}
\mathbb{P}(\delta^{(n)} >  \sqrt{d}g(n +\sqrt{n\log(n)})) &\leq \log(n)^{-1}
\end{align}
indicating that
\begin{align}
\lim_{n \rightarrow \infty} \mathbb{P}(\delta^{(n)} > \sqrt{d} g(n + \sqrt{n\log(n)})) \rightarrow 0. \label{a2}
\end{align} 
Consequently,
\begin{align}
\lim_{n \rightarrow \infty} \mathbb{P}(\delta^{(n)} > 2\sqrt{d} g(n)) \rightarrow 0. \label{a3}
\end{align} 
\end{lemma}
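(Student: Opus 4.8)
The plan is to reduce the entire statement to a tail bound on the arrival time $t_n$ by means of a purely deterministic containment. The crucial observation is that, by the definition of the observation window $H(t_n) = [-g(t_n), g(t_n)]^d \times [0, t_n/(2g(t_n))^d]$, every node visible at time $t_n$ has latent position lying in the spatial cross-section $[-g(t_n), g(t_n)]^d$. Hence each coordinate of each $Z^i$ has absolute value at most $g(t_n)$, so $||Z^i||^2 \leq d\, g(t_n)^2$, and therefore
\begin{align}
\delta^{(n)} = \max_{i=1,\ldots,n} ||Z^i|| \leq \sqrt{d}\, g(t_n)
\end{align}
holds with probability one. Since $g$ is increasing, this turns the event on $\delta^{(n)}$ into an event on $t_n$: whenever $\delta^{(n)} > \sqrt{d}\, g(n + \sqrt{n\log(n)})$ we must have $g(t_n) > g(n + \sqrt{n\log(n)})$, and hence $t_n > n + \sqrt{n\log(n)}$.

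It then suffices to control $\mathbb{P}(t_n > n + \sqrt{n\log(n)})$. By Lemma~\ref{waitingtimes}, $t_n \sim$ Gamma$(n,1)$, which has both mean and variance equal to $n$. Chebyshev's inequality then gives
\begin{align}
\mathbb{P}\!\left(t_n > n + \sqrt{n\log(n)}\right) \leq \mathbb{P}\!\left(|t_n - n| > \sqrt{n\log(n)}\right) \leq \frac{n}{n\log(n)} = \frac{1}{\log(n)}.
\end{align}
Combining this with the containment above establishes the first displayed inequality of the lemma, and letting $n \to \infty$ yields the second.

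For the final claim, I would argue that $\{\delta^{(n)} > 2\sqrt{d}\, g(n)\} \subseteq \{\delta^{(n)} > \sqrt{d}\, g(n + \sqrt{n\log(n)})\}$ for all large $n$, which holds as soon as $g(n + \sqrt{n\log(n)}) \leq 2 g(n)$. For $g(t) = t^{p/d}$ with $0 \leq p \leq 1$ this is equivalent to $\bigl(1 + \sqrt{\log(n)/n}\bigr)^{p/d} \leq 2$; since $\sqrt{\log(n)/n} \to 0$ and $p/d \leq 1$, the left-hand side tends to $1$, so the inequality holds for all sufficiently large $n$. The third probability is therefore eventually bounded by $1/\log(n)$, which vanishes, giving \eqref{a3}.

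The entire argument rests on the deterministic bound $\delta^{(n)} \leq \sqrt{d}\, g(t_n)$; once this is recognized, the only substantive work is a standard second-moment tail bound for the Gamma arrival time, together with the routine growth comparison $g(n + \sqrt{n\log(n)}) \leq 2 g(n)$. I do not expect a genuine obstacle here: the key point to get right is simply that all visible latent positions are confined to the window's spatial slab $[-g(t_n), g(t_n)]^d$, which makes the norm bound exact rather than merely probabilistic, so that no concentration argument for the positions themselves is needed.
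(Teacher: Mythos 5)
Your proposal is correct and follows essentially the same route as the paper's own proof: the deterministic containment $\delta^{(n)} \leq \sqrt{d}\, g(t_n)$, the Gamma$(n,1)$ law of $t_n$ from Lemma~\ref{waitingtimes}, Chebyshev's inequality with threshold $\sqrt{n\log(n)}$, and the growth comparison $g(n+\sqrt{n\log(n)}) \leq 2g(n)$. The only (immaterial) difference is that you verify this last inequality asymptotically, whereas the paper notes it holds for all $n \geq 1$ since $\log(n) \leq n$; either suffices for the limit statement.
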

\begin{proof}
Let $Z_{ij}$ denote the $j$th latent coordinate of node $i$. By construction, $\|Z_{ij}\| \leq g(t_n)$ for any $i\leq n, j \leq d$. Thus, $\delta^{(n)} \leq \sqrt{d} g(t_n)$. By Lemma~\ref{waitingtimes}, know that $t_n \sim$ Gamma($n$, 1). By Chebyshev inequality,
\begin{align}
\mathbb{P}(|t_n - n| > \sqrt{n\log(n)}) &\leq \log(n)^{-1}\\ 
\Rightarrow \mathbb{P}(t_n > n +\sqrt{n\log(n)}) &\leq \log(n)^{-1}\\
\Rightarrow \mathbb{P}(g(t_n) > g(n +\sqrt{n\log(n)})) &\leq \log(n)^{-1}\\
\Rightarrow \mathbb{P}( d^{-1/2} \delta^{(n)} > g(n +\sqrt{n\log(n)})) &\leq \log(n)^{-1}\\
\Rightarrow \mathbb{P}( \delta^{(n)} >  \sqrt{d}g(n +\sqrt{n\log(n)})) &\leq \log(n)^{-1}
\end{align}
The result in (\ref{a2}) follows from taking the limit, and the result in (\ref{a3}) follows from $g(n +\sqrt{n\log(n)}) \leq 2 g(n)$ for all non-decreasing $g(n) = n^{p/d}$ and $n \geq 1$.
\end{proof}

\begin{lemma}\label{rectangularareregular}
Rectangular LPMs are regular with $G(n) = 2\sqrt{d} n^{p/d}$.
\end{lemma}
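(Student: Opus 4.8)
The plan is to verify the three regularity criteria from \S\ref{criteria} in turn, with essentially all of the work concentrated on criterion~3. Criterion~2 is immediate: rectangular LPMs are defined on the Euclidean latent space $S = \mathbb{R}^d$, so $S \subseteq \mathbb{R}^d$ holds by construction. Criterion~1 concerns only the link probability function and is assumed to hold for the rectangular LPMs under consideration---the link functions we study are differentiably decreasing and bounded above by $1-\epsilon$---so it requires no separate argument. Thus the only substantive task is to exhibit a differentiable $G$ verifying criterion~3.

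For criterion~3, I would take $G(n) = 2\sqrt{d}\,g(n) = 2\sqrt{d}\,n^p$, which is differentiable in $n$, and show $I(\|Z^n\| \leq G(n)) \convp 1$. The key observation is that the $n$th node is among the $n$ nodes visible at time $t_n$, so its latent position satisfies $\|Z^n\| \leq \max_{i \leq n}\|Z^i\| = \delta^{(n)}$ in the notation of Lemma~\ref{windowdist}. Hence the event $\{\delta^{(n)} \leq 2\sqrt{d}\,g(n)\}$ is contained in $\{\|Z^n\| \leq 2\sqrt{d}\,g(n)\}$, giving
\begin{align}
\mathbb{P}\bigl(\|Z^n\| > 2\sqrt{d}\,g(n)\bigr) \leq \mathbb{P}\bigl(\delta^{(n)} > 2\sqrt{d}\,g(n)\bigr).
\end{align}
I would then invoke Lemma~\ref{windowdist}, specifically the conclusion that the right-hand side tends to $0$ as $n \to \infty$, to obtain $\mathbb{P}(\|Z^n\| > G(n)) \to 0$, that is, $I(\|Z^n\| \leq G(n)) \convp 1$. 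This establishes criterion~3 with the claimed $G(n) = 2\sqrt{d}\,n^p$ and completes the verification that rectangular LPMs are regular.

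There is no real obstacle here, since Lemma~\ref{windowdist} already carries out the probabilistic heavy lifting (the Chebyshev bound on the Gamma-distributed arrival time $t_n$ together with the geometric bound $\delta^{(n)} \leq \sqrt{d}\,g(t_n)$). The only point requiring a moment's care is the reduction from the maximum norm $\delta^{(n)}$ to the single norm $\|Z^n\|$, which is valid precisely because $\delta^{(n)}$ dominates every individual $\|Z^i\|$ for $i \leq n$; everything else is bookkeeping.
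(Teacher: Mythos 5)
Your proof is correct and takes essentially the same route as the paper's: criteria 1 and 2 hold by definition of a rectangular LPM, and criterion 3 follows from Lemma~\ref{windowdist}. The only difference is that you make explicit the (trivial but worth noting) reduction $\|Z^n\| \leq \delta^{(n)}$, which the paper leaves implicit when citing Lemma~\ref{windowdist}.
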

\begin{proof}
Criteria 1 and 2 of a regular LPM hold by definition of a rectangular LPM. Lemma~\ref{windowdist} guarantees that satisfaction of criterion 3.
\end{proof}

\begin{lemma}\label{exchangeableareregular} 
Consider a LPM on $S = \mathbb{R}^d$ with each latent position independently and identically distributed according to a multivariate Gaussian distribution with mean zero and diagonal variance matrix $\Sigma$. Let $\sigma^2_1, \ldots, \sigma^2_d$ denote the entries along the diagonal of $\Sigma$, with $\sigma_1 \geq \sigma_2 \geq \cdots \geq \sigma_d > 0$.  If the link probability function is upper bounded by $1 -\epsilon$, then the LPM is regular with $G(n) =  \sqrt{2 \sigma_1^2 (1+c) \log(n)}$ for any $c > 0$.
\end{lemma}
\begin{proof}
Criteria 1 and 2 for regularity hold trivially. Thus, it is sufficient to prove criteria 3 for the prescribed $G(n)$. Let $Z_1, \ldots, Z_n$ denote the latent positions. Define $X_1, \ldots, X_n$ such that for $i \in \left\{1, \ldots, n\right\}$ and $j \in \left\{1, \ldots, d\right\}$, 
\begin{align}
X_{ij} := \frac{\sigma_{1}}{\sigma_{j}} Z_{ij}.
\end{align}
It follows from $\sigma_1 \geq \sigma_j$ that $\|X_{i}\| \geq \| Z_{i}\|$. Moreover, each $X_{i}$ is a mean zero isotropic Gaussian random vector with variance $\sigma_1^2$ in each dimension. Therefore, $\|X_i\|^2/\sigma_1^2$ follows a $\chi^2$ distribution with parameter $d$. We can apply the concentration inequality on $\chi^2$ random variables implied by \citet[Lemma 1]{Laurent-Massart-adaptive}, to conclude, for any $t > 0$
\begin{align}
\mathbb{P}\left(\|X_i\| > \sigma_1 \sqrt{d + 2t + 2 \sqrt{d t}} \right) &\leq \exp{(-t)}\\
\Rightarrow \mathbb{P}\left(\|X_i\| > \sqrt{2} \sigma_1 (u + \sqrt{d}) \right) &\leq \exp{(-u^2)}
\end{align}
for any $u > 0$. Applying the union bound results in
\begin{align}
\mathbb{P}\left(\max_{1 \leq i \leq n} \|X_i\| >  \sqrt{2} \sigma_1 (u + \sqrt{d})  \right) &\leq n\exp{(-u^2)}.
\end{align}
So long as $u^2 \geq (1+c)\log(n)$, for $c > 0$, the above probability goes to 0. Note that $ \sqrt{2 \sigma_1^2 (1+c) \log(n)}$ dominates $ \sqrt{2d \sigma_1^2}$ as $n$ grows.  Because $\|X_{i}\| \geq \| Z_{i}\|$, a choice of $G(n) =  \sqrt{2 \sigma_1^2 (1+c) \log(n)}$ yields the desired result for $c > 0$. 
\end{proof}

\begin{lemma}\label{symmetrization}
Symmetrization Lemma\\
Let
\begin{align}
\Omega &= \left\{X \in \mathbb{R}^{n \times d}: \|X_i\| \leq G(n) \forall i \in [n] \right\}
\end{align}
for $G(n) \in \mathbb{R}_+$. Let $L(x: Y^n)$ denote the log likelihood of the latent positions $x \in \Omega$ as defined in (\ref{thelikelihood}) for a link function $K$. Let $\bl(x) = L(x: Y^n) - L(\textbf{0}:Y^n)$ and $\E(\bl(x))$ denote its expectation. Then, for $h \geq 1$,
\begin{equation}
\begin{aligned}
&\E \left(\sup_{x \in  \Omega} |\bl(x) - \E(\bl(x))|^h \right) \\
\leq ~ & 2^h \E \left( \sup_{x \in  \Omega} \left|\sum_{j=1}^n \sum_{i=1}^n R_{ij}\left(Y^n_{ij}  \log \left(\frac{K(\delta^x_{ij})}{K(0)}\right) + (1 - Y^n_{ij})  \log \left(\frac{1- K(\delta^x_{ij})}{1- K(0)}\right) \right) \right|^h\right) 
\end{aligned}
\end{equation}
where $R$ denotes an array of independent Rademacher random variables and $\delta^x_{ij} = \|x_i - x_j\|$.
\end{lemma}
\begin{proof}
This proof follows the same argument of that of \citet[Lemma 6.3]{Ledoux-Talagrand}.
Let $\bl_{ij}(x)$ denote the contribution of $Y^n_{ij}$ to the standardized log likelihood. Thus, 
\begin{align}
\bl(x) &= \sum_{i=1}^n \sum_{j=1}^n \bl_{ij}(x) \text{ and}\\
\bl(x) - \E(\bl(x)) &= \sum_{i=1}^n \sum_{j=1}^n \ell_{ij}(x)
\end{align}
where each $\ell_{ij}(x) = \bl_{ij}(x) - \E(\bl_{ij}(x))$ is a zero mean random variable. For each $i,j$, let $\ell'_{ij}(x)$ denote a random variable that is independently drawn from the distribution of $\ell_{ij}(x)$. Then, $\ell_{ij}(x) - \ell'_{ij}(x)$ is a symmetric zero mean random variable with the same distribution as $R_{ij}(\ell_{ij}(x) - \ell'_{ij}(x))$. Moreover, we can view $\sup_{x \in  \Omega} |f(x)|$ as defining a norm on the Banach space of functions $f: \Omega \rightarrow \mathbb{R}$. These facts, along with the convexity of exponentiating by $h$, imply the following.

{\small
\begin{align}
&\E \left(\sup_{x \in  \Omega} \left|\bl(x) - \E(\bl(x))\right|^h \right)\\
 &= \E \left(\sup_{x \in  \Omega} \left|\sum_{i=1}^n \sum_{j=1}^n \ell_{ij}(x)\right|^h \right)\\
   &\leq \E \left(\sup_{x \in  \Omega} \left|\sum_{i=1}^n \sum_{j=1}^n \ell_{ij}(x) - \ell'_{ij}(x) \right|^h \right) \text{ (cf. \citep[Equation 2.5]{Ledoux-Talagrand})}\\
   &=   \E \left(\sup_{x \in  \Omega} \left|\sum_{i=1}^n \sum_{j=1}^nR_{ij}\left(\ell_{ij}(x) - \ell'_{ij}(x)\right) \right|^h \right)\\
   &=   \E \left( \left|\sup_{x \in  \Omega} \sum_{i=1}^n \sum_{j=1}^nR_{ij}\left(\bl_{ij}(x) - \bl'_{ij}(x)\right) \right|^h \right)\\
   &\leq   \E \left( \left(\sup_{x \in  \Omega}\left|\sum_{i=1}^n \sum_{j=1}^n R_{ij}\bl_{ij}(x)\right| + \sup_{x \in  \Omega}\left|\sum_{i=1}^n \sum_{j=1}^n R_{ij}\bl'_{ij}(x) \right|\right)^h \right)\\
   & \leq   \E \left( \frac{1}{2}\sup_{x \in  \Omega} \left|2\sum_{i=1}^n \sum_{j=1}^n R_{ij}\bl_{ij}(x)\right|^h + \frac{1}{2}\sup_{x \in  \Omega}\left|2\sum_{i=1}^n \sum_{j=1}^n R_{ij}\bl'_{ij}(x) \right|^h \right) \\
   & \text{ by convexity of exponentiating by $h$}\\
   &=  \frac{1}{2} \E \left( \sup_{x \in  \Omega} \left|2\sum_{i=1}^n \sum_{j=1}^n R_{ij}\bl_{ij}(x)\right|^h\right) + \frac{1}{2} \mathbb{E}\left(\sup_{x \in  \Omega}\left|2\sum_{i=1}^n \sum_{j=1}^n R_{ij}\bl'_{ij}(x) \right|^h \right) \\
   &= 2^h \E \left(\sup_{x \in  \Omega} \left|\sum_{i=1}^n \sum_{j=1}^nR_{ij}  \bl_{ij}(x) \right|^h \right).
\end{align}
The result follows from the definitions of the $\bl_{ij}$.
}
\end{proof}

\begin{lemma}\label{contraction}
Contraction Theorem \citep[Theorem 4.12]{Ledoux-Talagrand}.\\
 Let $F: \mathbb{R}_+ \rightarrow \mathbb{R}_+$ be convex and increasing. Let $\phi_i:\mathbb{R} \rightarrow \mathbb{R}$ for $i \leq N$ satisfy $\phi_i(0) = 0$ and $|\phi_i(s) - \phi_i(t)| \leq |s-t|$ for all $s,t \in \mathbb{R}$. Then, for any bounded subset $\Omega \subset \mathbb{R}$,
\begin{align}
\mathbb{E}\left(F \left(\frac{1}{2} \sup_{t \in \Omega^N} \left| \sum_{i=1}^N R_i \phi_{i}(t_i) \right| \right) \right) & \leq \mathbb{E}\left(F \left(\sup_{t \in \Omega^N} \left| \sum_{i=1}^N R_i t_i \right| \right) \right)
\end{align}
where $R_1, \ldots, R_N$ denote independent Rademacher random variables.
\end{lemma}

\begin{cor}\label{gettingalpha}
Let $R$ denote an $n\times n$ array of independent Rademacher random variables, $K: \mathbb{R}_+ \rightarrow [0, 1-\epsilon]$ denote a link function that satisfies the regularity criteria in \S\ref{criteria} (i.e. monotonically decreasing, differentiable function that is upper bounded by $1- \epsilon$ for some $\epsilon$), and 
\begin{align}
\Omega &= \left\{X \in \mathbb{R}^{n \times d}: \|X_i\| \leq G(n) \text{ for all } i \in [n] \right\}
\end{align}
with $G(n) \in \mathbb{R}_+$, and $Y^n \in \left\{0,1\right\}^{n \times n}$. Define $\alpha_n^K$ as in (\ref{alpha}). That is,
\begin{align}
\alpha^K_{n} = \sup_{0 \leq x \leq 2G(n)} \frac{|K'(x)|}{|x| K(x)\epsilon}.
\end{align}
Then,
\begin{align}
\E \left( \sup_{x \in  \Omega} \left|\sum_{j=1}^n \sum_{i=1}^n R_{ij}\left(Y^n_{ij}  \log \left(\frac{K(\delta^x_{ij})}{K(0)}\right) + (1 - Y^n_{ij})  \log \left(\frac{1- K(\delta^x_{ij})}{1- K(0)}\right) \right) \right|^h\right)\\
\leq (2\alpha^K_n)^h \E \left(\sup_{x \in  \Omega} \left(\left|\sum_{j=1}^n \sum_{i=1}^n R_{ij}\|x_i - x_j\|^2 \right|^h\right) \right)
\end{align}
for $h \geq 1$, where $\delta^x_{ij} = \|x_i - x_j\|$.
\end{cor}
\begin{proof}
We can apply Lemma~\ref{contraction} to obtain this result as follows.

For all $x \in \Omega$, $i,j \in [n]$, we know by the triangle inequality that $\|x_i - x_j\|^2 \leq 4G(n)^2$. Moreover, $K(2G(n)) \leq K(\|x_i - x_j\|) \leq 1 - \epsilon$ because $K$ is regular. A Taylor expansion of $\log(K(\sqrt{\cdot}))$ around $0$ reveals that 
\begin{align}
& \log(K(\sqrt{u})) - \log(K(\sqrt{0}))\\ &= \frac{u K'(\sqrt{w})}{2\sqrt{w} K(\sqrt{w})} \text{ for some }w\in[0,4G(n)^2]\\
&= \frac{u K'(v)}{2 v K(v)} \text{ for some }v\in[0,2G(n)].
\end{align}
Taking the supremum over possible values of $v$, it follows that
\begin{align}
\left| \frac{\log(K(\sqrt{u})) - \log(K(\sqrt{0}))}{\alpha^K_n} \right| &\leq u.
\end{align}
Similarly, a Taylor expansion of $\log(1 -K(\sqrt{\cdot}))$ around $0$ yields
\begin{align}
& \log(1-K(\sqrt{u})) - \log(1-K(\sqrt{0}))\\ &= \frac{-u K'(\sqrt{w})}{2v(1-K(\sqrt{w}))} \text{ for some }w\in[0,4G(n)^2] \\
 &= \frac{-u K'(v)}{2v(1-K(v))} \text{ for some }v\in[0,2G(n)].
\end{align}
Similarly, taking the supremum over possible values of $v$ yields
\begin{align}
\left| \frac{\log(1-K(\sqrt{u})) - \log(1-K(\sqrt{0}))}{\alpha^K_n}\right| &\leq u.
\end{align}
Together, we have
\begin{align}
\left| \frac{Y^n_{ij} \log \left(\frac{K(u)}{K(0)}\right) + (1 - Y^n_{ij})  \log \left(\frac{1- K(u)}{1- K(0)}\right) }{\alpha^K_n}\right| & \leq u.
\end{align}
Moreover, for any $i,j$, the function on the lefthand side is 0 at $u=0$. Thus, the function meets the criteria required of the $\phi$ functions in Lemma~\ref{contraction} and the result follows from convexity of exponentiating by $h$.
\end{proof}

\begin{lemma}\label{daviskahan}
Let $\Sigma, \hat{\Sigma} \in \mathbb{R}^{n \times n}$ be symmetric, with eigenvalues $\lambda_1  \geq \lambda_2 \geq \cdots \geq \lambda_n$ and $\hat{\lambda}_1 \geq \cdots \geq \hat{\lambda}_n$, respectively. Fix $1 \leq r \leq s \leq n$ and assume that $\text{min}\left(\lambda_{r-1} - \lambda_{r}, \lambda_{s} - \lambda_{s + 1}\right) > 0$ where $\lambda_0 := \infty$ and $\lambda_{n + 1} := - \infty$. Let $q := s - r + 1$. Let $V, \hat{V} \in \mathbb{R}^{n \times q}$ have orthonormal columns satisfying $\Sigma V_j = \lambda_j V_j$ and $\hat{\Sigma} \hat{V}_j = \hat{\lambda}_j  \hat{V}_j $ for $j \in \left\{1, \ldots, q\right\}$. Then, there exists an orthogonal matrix $O \in \mathbb{R}^{q \times q}$ such that
\begin{align}
\| \hat{V} O - V\|^2_F & \leq \frac{2^{3} \| \hat{\Sigma} - \Sigma \|^2_F}{\text{min}\left(\lambda_{r-1} - \lambda_{r}, \lambda_{s} - \lambda_{s + 1}\right)^2}.
\end{align}
\end{lemma}
\begin{proof}
This follows from the Davis-Kahan Theorem \citep[Theorem 2]{Yu-Wang-Samworth-davis-kahan}.
\end{proof}

\subsection{Projectivity Proofs}\label{projectivityproofs}

\subsubsection{Proof of Proposition~\ref{exprojective}}\label{exprojectiveproof}
\begin{proof}
Let $Y^{n_1}$ and $Y^{n_2}$ denote random graphs with $n_1$ and $n_2$ nodes ($n_1 < n_2$) generated according to an exchangeable LPM, and let $\mathbb{P}^{n_1}$ and $\mathbb{P}^{n_2}$ be their corresponding distributions. Let $Z^j_{i}$ denote the random latent position of node $i$ in $Y^{j}$ for $j=n_1,n_2$. By definition, $Z^{n_1}_{i}$ and $Z^{n_2}_{i}$ are iid draws from the same distribution $f$ on $S$. Thus, the $(Z^{j}_{i})_{i=1\ldots n_1}$ have identical distributions for each $j$. As a result, $K(\rho(Z^{n_1}_{i_1}, Z^{n_1}_{i_1}))$ has the same distribution as $K(\rho(Z^{n_1}_{i_2}, Z^{n_1}_{i_2}))$ for any $1 \leq i_1, i_2 \leq n_1$. Because the distributions for each dyad coincide, the distributions over adjacency matrices coincide.
\end{proof}

\subsubsection{Proof of Proposition~\ref{projectiveRCM}} \label{projectiveRCMproof}
\begin{proof}
 Let $Y^{n_1}$ and $Y^{n_2}$ denote random graphs distributed with $n_1$ and $n_2$ nodes ($n_1 < n_2$) obtained by the finite window approach on the Poisson random connection model on $\mathbb{R}_+$, and let $\mathbb{P}^{n_1}$ and $\mathbb{P}^{n_2}$ be their corresponding distributions. Let $Z^{j}_i$ denote the random latent position of node $i$ in $Y^{j}$ for $j=n_1,n_2$. For both cases, the random variables $Z^{j}_1 - 0$, $Z^{j}_2 - Z^{j}_1$, \ldots, $Z^{j}_{n_1} - Z^j_{n_1 - 1}$ are iid exponential random variables, by the interval theorem for point processes \citep[p. 39]{Kingman-Poisson}. Thus, the $(Z^{j}_i)_{i=1\ldots n_1}$ have identical distributions for each $j$. The rest follows identically as for Proposition~\ref{exprojective}.
\end{proof}

\subsubsection{Proof of Theorem~\ref{projective}}\label{projectiveproof}
\begin{proof}
Let $Y^{n_1}$ and $Y^{n_2}$ denote random graphs distributed with $n_1$ and $n_2$ nodes ($n_1 < n_2$) obtained from a rectangular LPM on $\mathbb{R}_+^d$. Let $\mathbb{P}^{n_1}$ and $\mathbb{P}^{n_2}$ be their corresponding distributions. Let $t_{i}^j,$ denote the arrival time for the $i$th node in $Y^j$ for $j=n_1,n_2$. Following, Lemma~\ref{waitingtimes}, both $t_{i}^{n_1}$ and $t_i^{n_2}$ are equally distributed. Therefore, $Z_{i}^{n_1}$ and $Z_{i}^{n_2}$ must also be equally distributed. The rest follows as in the proofs for Proposition~\ref{exprojective}.
\end{proof}

\subsubsection{Proof of Proposition~\ref{notprojective}}\label{notprojectiveproof}
\begin{proof}
Suppose $(s_n)_{n=1\ldots \infty}$ is not constant. Then there is an $n_2 > n_1 \geq 2$ such that $s_n \neq s_{n_2}$. Let $Y^{n_1}$ and $Y^{n_2}$ denote random graphs with $n_1$ and $n_2$ nodes. Notice that the marginal distribution of $Y^{n}_{12}$ in a graph with $n$ nodes is given by
\begin{align}
\mathbb{P}^{n}(Y_{12}=1) &= \mathbb{E}\left(\mathbb{P}(Y^n_{12}=1|Z_1, Z_2)\right)\\
                                      & =  \mathbb{E}\left(s_{n}K\left(\rho(Z_1, Z_2) \right) \right)\\
                                      &= s_{n} \mathbb{E}\left(K\left(\rho(Z_1, Z_2) \right) \right).
\end{align}
Clearly,  $\mathbb{P}^{n_1}(Y_{12}=1)  \neq \mathbb{P}^{n_2}(Y_{12}=1)$ because $s_{n_1} \neq s_{n_2}$ and $Z_1,Z_2 \sim f$ independently of $k$. Thus the model cannot be projective.
\end{proof}

\subsection{Sparsity Proofs} \label{sparsityproofs}

\subsubsection{Proof of Proposition~\ref{exsparse}} \label{exsparseproof}
\begin{proof}
Let $n$ be the number of nodes in the latent position network model. Then the expected number of edges $\sum_{i=1}^n \sum_{j=1}^n Y_{ij}$ is given by
\begin{align}
\mathbb{E}\left(\frac{\sum_{i=1}^n \sum_{j=1}^n Y_{ij}}{n^2}\right) &= \frac{1}{n^2}\sum_{i=1}^n \sum_{j=1}^n \mathbb{E}( \mathbb{E}(Y_{ij}| Z_i, Z_j ))\\
                                    &= \frac{1}{n^2} \sum_{i=1}^n \sum_{j=1}^n  \mathbb{E} K(\rho(Z_i, Z_j))\\
                                    &= \mathbb{E} K(\rho(Z_i, Z_j))
\end{align}
where $\mathbb{E} K(\rho(Z_i, Z_j))$ is constant due to $Z_i$ being independent and identically distributed. Thus, as long as the network is not empty, it is dense.
\end{proof}

\subsubsection{Proof of Proposition~\ref{sparseRCM}} \label{sparseRCMproof}
\begin{proof}
A special case of Theorem~\ref{sparsity} with $d=1$ and $g(t) = t$.
\end{proof} 

\subsubsection{Proof of Theorem~\ref{sparsity}} \label{sparsityproof}
\begin{proof}
 Let $\pi$ be a permutation on $(1, \ldots, n)$ chosen uniformly at random from the set of permutations on $(1, \ldots, n)$. Then, 
 \begin{align}  
\sum_{i=1}^{n} \sum_{j=1}^{n} Y_{ij} &= \sum_{i=1}^{n} \sum_{j=1}^{n} Y_{\pi(i)\pi(j)}.
\end{align}
Let $\Omega = [-g(t_{n+1}), g(t_{n+1})]^d$. By Lemma~\ref{uniformZ},
\begin{align} 
\mathbb{E}(Y_{\pi(i)\pi(j)}|t_{n+1}) &= \int_{z,z' \in \Omega} K(\|z - z'\|) \frac{1}{2^dg(t_{n+1})^d} \text{d}z'  \frac{1}{2^dg(t_{n+1})^d} \text{d}z \\
                                                  &\leq \frac{1}{4^d g(t_{n+1})^{2d}} \int_{z \in \Omega} C \text{d}z\\
                                                  &\propto \frac{1}{2^dg(t_{n+1})^{d}} 
\end{align}
for some $C \in \mathbb{R}_+$ by Lemma~\ref{distanceswitch}. Similarly,
\begin{align} 
\mathbb{E}(Y_{\pi(i)\pi(j)}|t_{n+1}) &= \int_{z,z' \in \Omega} K(\|z - z'\|) \frac{1}{2^dg(t_{n+1})^d} \text{d}z'  \frac{1}{2^dg(t_{n+1})^d} \text{d}z \\
                                                  &\geq \frac{C'}{2^dg(t_{n+1})^{d}} 
\end{align}
for some $C' \in \mathbb{R}_+$ by Lemma~\ref{distanceswitch}. Thus,
\begin{align}
\mathbb{E}\left(\frac{g(n)^d}{n^2} \sum_{i=1}^n \sum_{j=1}^n Y_{ij} \mid t_{n+1} \right) &= \mathbb{E}\left(\frac{g(n)^d}{n^2} \sum_{i=1}^n \sum_{j=1}^n Y_{\pi(i)\pi(j)} \mid t_{n+1} \right)\\
&= g(n)^d \mathbb{E}(Y_{\pi(i)\pi(j)}| t_{n+1})\\
&\propto \frac{g(n)^d}{g(t_{n+1})^d}.
\end{align}
We can analytically integrate over possible $t_{n+1}$ because $t_{n+1}$ follows Gamma($n+1$, 1), as given by Lemma~\ref{waitingtimes}.
\begin{align}
\mathbb{E}\left(\frac{g(n)^d}{n^2} \sum_{i=1}^n \sum_{j=1}^n Y_{ij} \right) &= \mathbb{E}\left(\mathbb{E}\left(\frac{g(n)^d}{n^2} \sum_{i=1}^n \sum_{j=1}^n Y_{ij}\mid t_{n+1} \right) \right)\\
&\propto \mathbb{E}\left(\frac{g(n)^d}{g(t_{n+1})^d}\right)\\
&= n^{p}\int_{0}^{\infty} t^{-p} \frac{1}{\Gamma(n+1)} t^{n} \exp{(-t)}  \text{d}t\\
&= n^{p} \frac{\Gamma(n-p+1)}{\Gamma(n+1)}
\end{align}
which converges to one as $n$ goes to infinity.
\end{proof}

\subsubsection{Proof of Proposition~\ref{sparsegraphonissparse}}\label{sparsegraphonissparseproof}

\begin{proof}
Let $n$ be the number of nodes in the LPM. Then the expected number of edges $\sum_{i=1}^n \sum_{j\neq i} Y_{ij}$ is given by
\begin{align}
\mathbb{E}\left(\sum_{i=1}^n \sum_{j\neq i}  Y_{ij}\right) &= \sum_{i=1}^n \sum_{j\neq i} \mathbb{E}( \mathbb{E}(Y_{ij}| Z_i, Z_j ))\\
                                    &= \sum_{i=1}^n \sum_{j\neq i}  \mathbb{E} \left(K_n(\rho(Z_i, Z_j))\right)\\
                                    &= n(n-1) \mathbb{E} \left(K_n(\rho(Z_i, Z_j))\right)
\end{align}
where $\mathbb{E} \left(K_n(\rho(Z_i, Z_j))\right)$ takes the same value for all $i\neq j$ because the $Z_i$ are independent and identically distributed. By definition of $K_n$, we have that
\begin{align}
s_n K(\rho(Z_i, Z_j)) \geq K_n(\rho(Z_i, Z_j)) \geq  s_n s_1^{-1} K_1(\rho(Z_i, Z_j)).
\end{align}
Therefore,
\begin{align}                                                            
\mathbb{E} \left(K_n(\rho(Z_i, Z_j))   \right)                               &\leq s_n \mathbb{E} (K(\rho(Z_i, Z_j)))\\
\mathbb{E} \left(K_n(\rho(Z_i, Z_j))  \right)                                &\geq s_n s_1^{-1} \mathbb{E} (K_1(\rho(Z_i, Z_j))).
\end{align}
Since both $\mathbb{E} (K(\rho(Z_i, Z_j))$ and $s_1^{-1}\mathbb{E} (K_1(\rho(Z_i, Z_j)))$ are constants that are independent of $n$, the expected number of edges must be of order $s_n n^2$.
\end{proof}

\subsection{Learnability Proofs}\label{consistencyproofs}

\subsubsection{Proof of Lemma~\ref{consistentprob}}\label{proofconsistentprob}

Much of the argument provided here can be viewed specialization of the results established in \citet[Theorem 6]{Davenport-et-al-1-bit}. For clarity, we include the entirety of the argument, illustrating our non-standard choices for many of the components, as well as some small differences such as using a restricted maximum likelihood estimator. 

The notation for our proofs is simplified by working with the following standardized version of the likelihood
\begin{align}
\bl(z: Y^n) &= L(z: Y^n) - L(z=\textbf{0}: Y^n)\\
    &= \sum_{i=1}^n \sum_{j =1}^n Y^n_{ij}  \log \left(\frac{K(\delta^z_{ij})}{K(0)}\right) + (1- Y^n_{ij}) \log \left(\frac{1- K(\delta^z_{ij})}{1 - K(0)}\right)
\end{align}
where $\delta^z_{ij} = \|z_i - z_j\|$.
Note that the standardized likelihood and non-standardized version of the likelihood are maximized by the same value of $z$ for a given $Y^n$. Going forward, we use the shorthand $\bl(z)$; $Y^n$ is implied.

In order to establish concentration of $\|P^{\hat{z}(Y^n)} - P^{z}\|_F^2$, we first establish concentration of related quantities. Specifically, Lemma~\ref{boundingkl} establishes concentration of $\text{KL}(P^{\hat{z}(Y^n)}, P^{z})$. Here, KL($P, Q$) denotes the Kullback-Leibler divergence \citep{Cover-and-Thomas-2nd} between two link probability matrices $P$ and $Q$. It is a non-negative and given by
\begin{align}
\text{KL}(P, Q) &= \sum_{i=1}^n \sum_{j=1}^n \log{\left(\frac{P_{ij}}{Q_{ij}} \right)} + (1-P_{ij}) \log{\left(\frac{1 - P_{ij}}{ 1- Q_{ij}} \right)}.
\end{align}

\begin{lemma}\label{boundingkl}
Consider a sequence of adjacency matrices $Y^n$ generated by a LPM meeting the regularity criteria provided in \S\ref{criteria}. Further assume that the true latent positions are within 
\begin{align}
\Omega &= \left\{X \in \mathbb{R}^{n \times d}: \|X_i\| \leq G(n) \right\}
\end{align}
where $X^i$ denotes the $i$th row of $X$. Let $P^{\hat{Z}(Y^n)}$ denote the estimated link probability matrix obtained via $\hat{Z}(Y^n)$ from (\ref{mle}). Then,
\begin{align}
\mathbb{P}\left(\text{KL}(P^{\hat{z}(Y^n)}, P^{z}) \geq 16 e \alpha^K_n G(n)^2 n^{1.5} (d + 2) \right) \leq \frac{C}{n^2}
\end{align}
for some $C > 0$.
\end{lemma}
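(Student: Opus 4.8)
The plan is to run the standard ``basic inequality plus empirical process'' argument of the 1-bit matrix completion literature \citep{Davenport-et-al-1-bit}, specialized to the latent-position parametrization. Write $\psi = \mathrm{logit}\circ K$ for the logit-transformed link, so that $\psi(\|z^i - z^j\|) = \log\frac{K(\|z^i-z^j\|)}{1 - K(\|z^i - z^j\|)}$, and let $p^*_{ij} = K(\|z^{*i} - z^{*j}\|)$ denote the true edge probabilities. Because the true positions $z^*$ are feasible for the constrained problem (by the hypothesis $\|z^{*i}\| \le G(n)$) and $\hat z$ maximizes $\bar{L}$ over that feasible set, we have the basic inequality $\bar{L}(\hat z) - \bar{L}(z^*) \ge 0$.

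First I would convert this into a bound on the KL divergence. A direct computation shows that $\mathbb{E}_{Y \sim P^{z^*}}[\bar{L}(z^*) - \bar{L}(z)] = \mathrm{KL}(P^{z^*}, P^z)$ for every fixed $z$, since $z^*$ maximizes the expected log-likelihood and the gap is exactly the per-dyad KL. Defining the centered (mean-zero) process
\begin{align}
\Delta(z) = \bigl[\bar{L}(z^*) - \bar{L}(z)\bigr] - \mathrm{KL}(P^{z^*}, P^z) = \sum_{i,j} (Y^n_{ij} - p^*_{ij})\bigl[\psi(\|z^i - z^j\|) - \psi(\|z^{*i} - z^{*j}\|)\bigr],
\end{align}
the basic inequality evaluated at $\hat z$ rearranges to $\mathrm{KL}(P^{z^*}, P^{\hat z}) \le -\Delta(\hat z) \le \sup_{z \in \Omega} |\Delta(z)|$. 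Thus it suffices to control the supremum of this empirical process over $\Omega$ and to show that the threshold $8 e\, \alpha^K_n G(n) n^{1.5}\sqrt{d+2}$ is exceeded with probability at most $C/n^2$.

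Next I would bound $\mathbb{E}\sup_{z}|\Delta(z)|$ by symmetrization and contraction. Symmetrizing replaces the centered Bernoullis $Y^n_{ij} - p^*_{ij}$ by Rademacher signs $\epsilon_{ij}$; the $z^*$ contribution is independent of $z$ and is absorbed. A Ledoux--Talagrand contraction then peels off the link: using $\psi'(x) = K'(x)/[K(x)(1-K(x))]$ together with $1 - K \ge \epsilon$ from regularity criterion 1, the Lipschitz behavior of $\psi$ is controlled by $\alpha^K_n$ (the fixed cutoff $\epsilon_K$ is what keeps $\psi'$ from blowing up near zero distance, where the Taylor expansion would otherwise divide by zero). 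This reduces the problem to a Rademacher process in the squared-distance matrices $D^z$. Since $D^z$ is an affine image of the Gram matrix $Z Z^{T}$, which is positive semidefinite, of rank at most $d$, and of trace $\sum_i \|z^i\|^2 \le n G(n)^2$, I would pass to trace/operator-norm duality and invoke a bound on the operator norm of a random sign matrix, $\mathbb{E}\|E\|_{\mathrm{op}} \le c\sqrt n$; the rank-$d$ structure contributes the $\sqrt{d+2}$ factor and the radius contributes $G(n)$, yielding an expectation bound of the claimed order $\alpha^K_n G(n) n^{1.5}\sqrt{d+2}$.

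Finally, I would upgrade the in-expectation bound to the stated tail bound. Because $\sup_z |\Delta(z)|$ is a function of the independent entries $\{Y^n_{ij}\}$ that changes by a bounded amount when any single $Y^n_{ij}$ is flipped (the per-dyad logit increments are bounded on $\Omega$), a bounded-differences (McDiarmid) inequality gives concentration of the supremum about its mean, and calibrating the deviation to $O(n\sqrt{\log n})$ --- which is dominated by the $n^{1.5}$ main term --- produces the $C/n^2$ tail at the stated threshold. I expect the main obstacle to be this empirical-process step: checking that contraction still applies despite $\psi'$ being unbounded at $0$, and carefully tracking how the radius $G(n)$, the logit-derivative bound $\alpha^K_n$ (with its $\sqrt{x}$ normalization), and the low-rank operator-norm concentration combine to give exactly the claimed powers of $n$, $G(n)$, and $d$. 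The remaining bookkeeping, including matching the orientation of the KL divergence to the statement's notation, is routine.
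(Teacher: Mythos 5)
Your core reduction is the same as the paper's: the basic inequality for the restricted MLE, the identity equating the expected likelihood gap with the KL divergence, the bound $\mathrm{KL} \le \sup_{z\in\Omega}|\Delta(z)|$, and then symmetrization, Ledoux--Talagrand contraction with Lipschitz constant governed by $\alpha^K_n$ (and the $\epsilon_K$ cutoff), followed by trace duality $\langle E, D^z\rangle \le \|E\|_o\|D^z\|_{*}$, the rank-$(d+2)$ structure of squared distance matrices, and an operator-norm bound for the Rademacher matrix $E$. Where you diverge is the last step, which converts these estimates into the $C/n^2$ tail, and that step has a genuine gap.

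You bound $\mathbb{E}\sup_{z\in\Omega}|\Delta(z)|$ and then invoke McDiarmid's bounded-differences inequality, asserting that flipping a single $Y^n_{ij}$ changes the supremum by a bounded amount and that the resulting deviation is $O(n\sqrt{\log n})$. The flaw is that the bounded-differences constant is $c_n = \sup_{z\in\Omega}\bigl|\psi(\|z^i-z^j\|)-\psi(\|z^{*i}-z^{*j}\|)\bigr|$ with $\psi = \mathrm{logit}\circ K$, which is finite for each $n$ but grows with $n$: since $\Omega$ has radius $G(n)\rightarrow\infty$, in the worst case $c_n \asymp \epsilon^{-1}\log\bigl(1/K(2G(n))\bigr)$. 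McDiarmid at level $C/n^2$ over the $\Theta(n^2)$ dyads then forces a deviation of order $c_n\, n\sqrt{\log n}$, not $n\sqrt{\log n}$, and this need not be dominated by the main term $\alpha^K_n G(n) n^{1.5}\sqrt{d+2}$. Concretely, take the regular LPM with $K(x)=(1-\epsilon)e^{-x^2}$ and $G(n)\asymp n$ (attainable, e.g., by a one-dimensional rectangular LPM with $p=1$): then $\alpha^K_n \asymp \sqrt{G(n)}$, so the main term is of order $n^{3}$, while $c_n \asymp G(n)^2 \asymp n^2$ makes the McDiarmid deviation of order $n^{3}\sqrt{\log n}$; for regular LPMs with faster-growing $G(n)$ the loss becomes polynomial in $n$. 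So, as written, your argument cannot deliver the stated threshold and tail uniformly over the class the lemma covers. The paper sidesteps this entirely: it never uses a concentration-of-measure inequality, but instead applies Markov's inequality to the $h$-th moment of the supremum with $h = 2\log n$, using Seginer's moment bound $\mathbb{E}\|E\|_o^h \le (C\sqrt{n})^h$ (valid precisely for $h \le 2\log n$) inside the symmetrization/contraction chain; choosing the threshold to be $e$ times the resulting moment bound produces the factor $e^{-h} = n^{-2}$ with no dependence on how fast $K$ decays or $G$ grows. Your proof can be repaired either by switching to this high-moment Markov argument, or by replacing McDiarmid with a variance-sensitive empirical-process inequality (e.g.\ Bousquet's version of Talagrand's inequality), whose scale term pays only $c_n\log n$ rather than $c_n n\sqrt{\log n}$; plain bounded differences is too crude here.
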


\begin{proof}
Note that for any $z_0 \in  \Omega$, we have
\begin{align}
\bl(z_0) - \bl(z) &= \E(\bl(z_0) - \bl(z)) + \bl(z_0) - \E(\bl(z_0)) - (\bl(z) - \E(\bl(z)))\\
                        &\leq \E(\bl(z_0) - \bl(z))  + | \bl(z_0) - \E(\bl(z_0))| + |(\bl(z) - \E(\bl(z)))|\\
                        & \leq \E(\bl(z_0) - \bl(z)) + 2 \sup_{x \in \Omega} |(\bl(x) - \E(\bl(x)))|\\
                        & \leq -\text{KL}(P^{z_0}, P^{z}) + 2 \sup_{x \in   \Omega} |(\bl(x) - \E(\bl(x)))|.
\end{align}
Let $z_0 = \hat{Z}(Y^n)$ denote the maximum likelihood estimator given in (\ref{mle}). Then, because $\bl(z_0) - \bl(z) \geq 0$,
\begin{align}
\text{KL}(P^{\hat{z}(Y^n)}, P^{z}) & \leq 2 \sup_{x \in  \Omega} |(\bl(x) - \E(\bl(x)))|.
\end{align}
So we can upper bound KL($P^{\hat{z}(Y_n)}, P^{z}$) by bounding
\[\sup_{x \in  \Omega} |(\bl(x) - \E(\bl(x)))|.\] 
Let $h$ be an arbitrary positive integer (we will later let it be $2\log(n)$). Applying the Markov inequality for $\sup_{x \in  \Omega} |(\bl(x) - \E(\bl(x)))|^h$ yields:
\begin{align}
\PP \left(\sup_{x \in  \Omega} |(\bl(x) - \E(\bl(x)))|^h > c(n)^h\right) &\leq \frac{\E (\sup_{x \in  \Omega} |(\bl(x) - \E(\bl(x)))|^h) }{c(n)^h}
\end{align}
for a positive function $c: \mathbb{N} \rightarrow \mathbb{R}_+$. To bound the expectation, we use a symmetrization argument (provided as Lemma~\ref{symmetrization} in \S\ref{intermed}) followed by a contraction argument (stated as Corollary~\ref{gettingalpha} in \S \ref{intermed}).

\begin{align}
& \hspace{0.5cm }\E \left(\sup_{x \in  \Omega} |(\bl(x) - \E(\bl(x)))|^h \right) \\
&\leq 2^h \E \left( \sup_{x \in  \Omega} \left|\sum_{j=1}^n \sum_{i=1}^n R_{ij}\left(Y^n_{ij}  \log \left(\frac{K(\delta^z_{ij})}{K(0)}\right) + (1 - Y^n_{ij})  \log \left(\frac{1- K(\delta^z_{ij})}{1- K(0)}\right) \right) \right|^h\right)\\
& \text{ by Lemma~\ref{symmetrization}}\\
&\leq (4\alpha^K_n)^h \E \left(\sup_{x \in  \Omega} \left(\left|\sum_{j=1}^n \sum_{i=1}^n R_{ij}\|x^i - x^j\|^2 \right|^h\right) \right) \text{ by Corollary~\ref{gettingalpha}.}\\
&= (4\alpha^K_n)^h \E\left( \sup_{x \in  \Omega} \left| \langle R, D^x \rangle\right|^h \right)
\end{align}
where $R = (R_{ij})$ is a matrix of independent Rademacher random variables, $D^x$ denotes the matrix of squared distances implied by $x$, and $\alpha^K_n$ is defined as in (\ref{alpha}). 

Let $\|\cdot\|_o$ denote the operator norm and $\| \cdot\|_{*}$ denote the nuclear norm. To bound $\E\left( \sup_{x \in  \Omega} \langle R,D^x \rangle^h \right)$, we make use of the fact that $\left|\langle A, B \rangle\right| \leq \|A\|_o \|B\|_{*}$. Then,
\begin{align}
\E\left( \sup_{x \in  \Omega} \left| \langle R, D^x \rangle\right|^h \right) &\leq \E\left(\sup_{\Omega} \|R\|_o^h \|D^x\|^h_{*} \right)\\
&= \E(\|R\|_o^h)\sup_{x \in  \Omega}  \|D^x\|_{*}^h\\
 &\leq  C n^{h/2} \sup_{x \in  \Omega}  \|D^x\|_{*}^h
 \end{align}
 where $\E(\|R\|_o^h)$ was bounded using \citet[Theorem 1.1]{Seginer-expected-norm-matrices} and $C > 0$ is a constant provided that $h \leq 2 \log(n)$. 
 
Recall that the rank of a squared distance matrix $D^x$ is at most $d+2$ where $d$ is the dimension of the positions $x$. Moreover, each of the eigenvalues of $D^x$ must be upper bounded by the product of maximum distance in $D^x$ and $n$ (where $n$ is the number of points). For $x \in  \Omega$, the maximum entry in $D^x$ is at most $4G(n)^2$. Thus, $\sup_{x \in  \Omega}  \|D^x\|_{*} \leq (d+2) 4nG(n)^2$. Therefore, 
\begin{align}
 \E\left( \sup_{x \in  \Omega} \left| \langle E, D^x \rangle\right|^h \right)  &\leq C n^{3h/2}  (2G(n))^{2h} (d + 2)^{h}.
\end{align}

Combining the above results yields
\begin{align}
\PP\left(\text{KL}(P^{\hat{z}(Y_n)}, P^{z}) \geq c(n)\right) &\leq \frac{2^{4h} C n^{3h/2}(\alpha_n^K)^h  G(n)^{2h}  (d + 2)^{h} }{c(n)^h}. 
\end{align}
Let $c(n) = 2^4 C_0 \alpha^K_n G(n)^2 (d+2) n^{3/2}$ for some constant $C_0$. Then, by letting $h = 2\log(n)$, we get
\begin{align}
\PP\left(\text{KL}(P^{\hat{z}(Y_n)}, P^{z}) \geq c(n)\right) &\leq C C_0^{-h}\\
&= C C_0^{-2\log(n)}\\
&= \frac{C }{n^{2 \log(C_0)}}
\end{align}
The result follows from letting $C_0 = e$.
\end{proof}

We can now leverage Lemma~\ref{boundingkl} into Corollary~\ref{boundHellinger}, a concentration bound on the squared Hellinger distance $d^2_H(P^{\hat{z}(Y^n)}, P^{z})$. Here, $d^2_H(P, Q)$ denotes the squared Hellinger distance between two link probability matrices $P$ and $Q$ given by
\begin{align}
d^2_H(P, Q) &= \sum_{i=1}^n \sum_{j=1}^n (\sqrt{P_{ij}} - \sqrt{Q_{ij}})^2 + (\sqrt{1 - P_{ij}} - \sqrt{1 - Q_{ij}})^2.
\end{align}

\begin{cor}\label{boundHellinger}
Consider a sequence adjacency matrices $Y^n$ generated by a LPM meeting the criteria provided in Section~\ref{criteria}. Further assume that the true latent positions are within 
\begin{align}
\Omega &= \left\{X \in \mathbb{R}^{n \times d}: \|X_i\| \leq G(n) \right\}.
\end{align}
Let $P^{\hat{Z}(Y^n)}$ denote the estimated link probability matrix obtained via $\hat{Z}(Y^n)$ from (\ref{mle}). Then,
\begin{align}\label{hel}
\mathbb{P}\left(d^2_H(P^{\hat{z}(Y^n)}, P^{z}) \geq 16 e \alpha^K_n G(n)^2 n^{1.5} (d + 2) \right) \leq \frac{C}{n^2}
\end{align}
\end{cor}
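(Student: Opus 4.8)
The plan is to reduce the claim entirely to Lemma~\ref{boundingkl} by way of the elementary, deterministic fact that (in the normalization adopted here) the squared Hellinger distance between two link probability matrices is dominated by their Kullback--Leibler divergence. First I would establish the inequality at the level of a single dyad. Fix $i,j$ and abbreviate $p = P_{ij}$, $q = Q_{ij}$; regularity criterion~1 together with positivity of $K$ guarantees $p,q \in (0, 1-\epsilon]$, so no argument of a square root or logarithm vanishes. The summand of $d^2_H$ at $(i,j)$ is exactly $2 H^2(p,q)$, where $H^2(p,q) = 1 - \sqrt{pq} - \sqrt{(1-p)(1-q)}$ is the standard Bernoulli squared Hellinger distance. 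Using $1 - x \leq -\log x$ gives
\begin{align}
H^2(p,q) \leq -\log\left(\sqrt{pq} + \sqrt{(1-p)(1-q)}\right) = \tfrac{1}{2} D_{1/2}(p \| q),
\end{align}
and since the Rényi divergence $D_\alpha$ is nondecreasing in $\alpha$ we have $\tfrac{1}{2} D_{1/2} \leq \tfrac{1}{2} D_1 = \tfrac{1}{2}\,\text{KL}(p\|q)$. Hence the per-dyad Hellinger term $2H^2(p,q)$ is bounded by the corresponding per-dyad KL term.

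Summing the per-dyad inequality over all $i,j$ then yields the matrix-level bound
\begin{align}
d^2_H(P, Q) \leq \text{KL}(P, Q),
\end{align}
valid for any two link probability matrices with entries in $(0,1)$. The one place requiring care is the bookkeeping of constants: the definitions of $d^2_H$ and $\text{KL}$ used in the paper both suppress the factor $\tfrac{1}{2}$ present in some conventions, and one must confirm that these suppressions are compatible, so that the clean bound $d^2_H \leq \text{KL}$ holds rather than $d^2_H \leq 2\,\text{KL}$. This is the only subtlety in the argument, and it is a routine constant check rather than a genuine obstacle.

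With the deterministic inequality in hand, the corollary is immediate. Applying it to $P = P^{\hat z(Y^n)}$ and $Q = P^{z}$ gives the event containment
\begin{align}
\left\{ d^2_H(P^{\hat z(Y^n)}, P^{z}) \geq t \right\} \subseteq \left\{ \text{KL}(P^{\hat z(Y^n)}, P^{z}) \geq t \right\}
\end{align}
for every threshold $t$, so by monotonicity of probability the probability of the left-hand event does not exceed that of the right-hand one. Taking $t = 8 e \alpha^K_n G(n) n^{1.5}\sqrt{d+2}$ and invoking Lemma~\ref{boundingkl} delivers the bound $C/n^2$, with the same constant $C$. Since the only ingredients are the already-proved KL concentration and a pointwise inequality, there is no fresh probabilistic difficulty; the main (mild) care needed is the normalization check described above and the observation that $0 < K \leq 1-\epsilon$ keeps every entry of both matrices strictly inside $(0,1)$.
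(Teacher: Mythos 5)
Your proposal is correct and follows essentially the same route as the paper: the paper's proof also reduces the corollary to Lemma~\ref{boundingkl} via the pointwise fact that Kullback--Leibler divergence dominates the squared Hellinger distance (which the paper simply cites from Gibbs and Su rather than deriving). Your self-contained derivation of that fact via $1-x \leq -\log x$ and monotonicity of the R\'enyi divergence is sound, and your constant bookkeeping ($2H^2(p,q) \leq \mathrm{KL}(p\|q)$, matching the paper's unnormalized definitions of both quantities) checks out.
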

\begin{proof}
(\ref{hel}) Follows from Lemma~\ref{boundingkl} and the fact that Kullback-Leibler divergence upper bounds the squared Hellinger distance \citep{Gibbs-Su-probability-metrics}. 
\end{proof}

Finally, the Frobenius norm $\|P - Q\|_F^2$ between $P$ and $Q$ is upper bounded by the squared Hellinger distance. 

We can thus proceed with our proof of Lemma~\ref{consistentprob}.
\begin{proof}
The result follows from Corollary~\ref{boundHellinger} because the squared Hellinger distance between two link probability matrices upper bounds the squared Frobenius norm between them. This follows from the fact that $u \leq \sqrt{u}$ for all $u \in [0,1]$.
\end{proof}

Note that, rather than immediately upper bounding $\|P - Q\|_F^2$ by $\text{KL}(P, Q)$, we introduce $d^2_H(P, Q)$ in Corollary~\ref{boundHellinger} due to its utility in proving Lemma~\ref{consistentdist}.

\subsubsection{Proof of Lemma~\ref{consistentdist}}\label{proofconsistentdist}

\begin{proof}
Let $\hat{d}_{ij}$ and $d_{ij}$ denote the $(i,j)$th entry of $D^{\hat{Z}(Y^n)}$ and $D^Z$ respectively. Then, $d^2_H(P^{\hat{Z}(Y^n)}, P^Z) = $ 
\begin{align}
 \sum_{i=1}^n \sum_{j=1}^n \left(K(\sqrt{d_{ij}})^{1/2} - K(\sqrt{\hat{d}_{ij}})^{1/2}\right)^2 + \left((1-K(\sqrt{d_{ij}}))^{1/2} - (1-K(\sqrt{\hat{d}_{ij}}))^{1/2}\right)^2 
\end{align}
can be lower-bounded as follows. 

Notice that for any $a,b \in \mathbb{R}$, $a^2 + b^2 \geq \frac{1}{2}(a-b)^2$. Therefore, $d^2_H(P^{\hat{Z}(Y^n)}, P^Z) \geq $ 
\begin{align}
&\sum_{i=1}^n \sum_{j=1}^n \frac{1}{2}\left(\left(K(\sqrt{d_{ij}})^{1/2} - K(\sqrt{\hat{d}_{ij}})^{1/2}\right) - \left(\left(1-K(\sqrt{d_{ij}})\right)^{1/2} - \left(1-K(\sqrt{\hat{d}_{ij}})\right)^{1/2}\right)\right)^2\\
&= \sum_{i=1}^n \sum_{j=1}^n \frac{1}{2}\left(\left(K(\sqrt{d_{ij}})^{1/2} -  \left(1-K(\sqrt{d_{ij}})\right)^{1/2} \right) - \left( K(\sqrt{\hat{d}_{ij}})^{1/2} - \left(1-K(\sqrt{\hat{d}_{ij}})\right)^{1/2}\right)\right)^2.\label{fortaylor}
\end{align}

Let $\gamma(t) = \sqrt{K(\sqrt{t})} - \sqrt{1- K(\sqrt{t})}$. Taylor expanding $\gamma(t)$ around $t = d_{ij}$ reveals that 
\begin{align}
\gamma\left(\hat{d}_{ij}\right) &=  \gamma(d_{ij}) + \gamma'(v) (\hat{d}_{ij} - d_{ij}) \text{ for some $v \in [0,4G(n)^2]$}
\end{align}
with
\begin{align}
\gamma'(v) &= \frac{K'(\sqrt{v})}{4\sqrt{v}} \left(\frac{1}{\sqrt{K(\sqrt{v})}} + \frac{1}{\sqrt{1 - K(\sqrt{v})}} \right)
\end{align}
and $K'(v)$ denoting the derivatives of $\gamma$ and $K$ with respect to $v$, respectively. Noting that 
\begin{align}
|\gamma'(v)| &\leq  \frac{|K'(\sqrt{v})|}{4\sqrt{v}} \left(\frac{1}{\sqrt{K(\sqrt{v}) \left(1 - K(\sqrt{v})\right)}} \right),
\end{align}
we can combine these results with (\ref{fortaylor}) to obtain a bound:
\begin{align}
d^2_H(P^{\hat{Z}(Y^n)}, P^Z)  &\geq \sum_{i=1}^n \sum_{j=1}^n \frac{1}{2} \inf_{v \in [0,4G(n)^2]} \gamma'(v)^2 (\hat{d}_{ij} - d_{ij})^2 \\
&= \frac{1}{2} \left(\inf_{v \in [0,4G(n)^2]} \gamma'(v)^2 \right)  \|D^{\hat{Z}(Y^n)} - D^Z \|_F^2\\
&\geq \frac{1}{32} \left(\inf_{\theta \in [0,2G(n)]} \frac{K'(\theta)^2}{\theta^2 K(\theta)(1-K(\theta))}\right) \|D^{\hat{Z}(Y^n)} - D^Z \|_F^2\\
&= \frac{1}{32} \left(\sup_{\theta \in [0,2G(n)]} \left( \frac{\theta^2 K(\theta)(1-K(\theta))}{K'(\theta)^2}\right)\right)^{-1} \|D^{\hat{Z}(Y^n)} - D^Z \|_F^2\\
&\geq \frac{1}{32 \beta^K_n} \|D^{\hat{Z}(Y^n)} - D^Z \|_F^2  \end{align}
where $\beta^K_n$ is defined as in (\ref{beta}). Combining this inequality with Corollary~\ref{boundHellinger} yields
\begin{align}
\mathbb{P}\left(d^2_H(P_{\hat{z}}, P_{z}) \geq 16 e \alpha^K_n G(n)^2 n^{1.5} (d + 2) \right) &\leq \frac{C}{n^2} \\
 \Rightarrow \mathbb{P}\left(\frac{1}{32\beta^K_{n}} \|D^{\hat{Z}(Y^n)} - D^Z \|_F^2 \geq 16 e \alpha^K_n G(n)^2 n^{1.5} (d + 2) \right) &\leq \frac{C}{n^2}\\
  \Rightarrow \mathbb{P}\left(\|D^{\hat{Z}(Y^n)} - D^Z \|_F^2 \geq 512e\beta^K_{n} \alpha^K_n G(n)^2 n^{1.5} (d + 2) \right) &\leq \frac{C}{n^2}.
\end{align}
\end{proof}                

\subsubsection{Proof of Lemma~\ref{consistentpositions}}\label{proofconsistentpositions}

Before proving Lemma~\ref{consistentpositions}, it is useful to first summarize our general strategy and introduce some notation. Our proof involves translating our concentration inequality for the latent squared distances (provided as Lemma~\ref{consistentdist}) to an analogous one for the latent positions. To do this, we combine results from classical multidimensional scaling \citep[Chapter 12]{Borg-Groenen-MDS}, Weyl's inequality \citep{Horn-Johnson-matrix-analysis}, and the Davis-Kahan theorem (Lemma~\ref{daviskahan}). 

Classical multi-dimensional scaling recovers a set of positions $Z \in \mathbb{R}^{n \times d}$ corresponding to a squared distance matrix $D \in \mathbb{R}^{n \times n}$. It does so through an eigendecomposition of the double centered distance matrix $-0.5 \mathcal{C}_n D \mathcal{C}_n$ with $\mathcal{C}_n$ defined in (\ref{doublecentering}). Here,
\begin{align}
Z &= V \Lambda^{1/2} 
\end{align}
is used to denote the recovered positions with $\Lambda \in \mathbb{R}^{d\times d}$ denoting a diagonal matrix consisting of the $d$ nonzero eigenvalues of $-0.5 \mathcal{C}_n D \mathcal{C}_n$ and $V \in \mathbb{R}^{n \times d}$ denoting a matrix with columns comprised of the corresponding eigenvectors. This technique is guaranteed to recover $Z$ exactly (up to translations, rotations and reflections).

Multidimensional scaling of both $D^Z$ and $D^{\hat{Z}(Y^n)}$ recovers the versions of the true latent positions and maximum likelihood estimates
\begin{align}
Z &= V \Lambda^{1/2} \\
\hat{Z}(Y^n) &= \hat{V} \hat{\Lambda}^{1/2}.
\end{align}
We resolve the identifiability issues (stemming from translations, rotations, and reflections) of these objects by minimizing over $O \in \mathcal{O}_p$ (rotations and reflections) and $Q \in \mathcal{Q}_{nd}$ (translations). We avoid explicitly minimizing over translations in our proof by considering the centered estimates obtain from multi-dimensional scaling. That is, the versions of $Z$ and $\hat{Z}(Y^n)$ obtained from multi-dimensional scaling end up being sufficient and
\begin{align}
\inf_{\substack{O \in \mathcal{O}_{d}\\ Q \in \mathcal{Q}_{nd}}} \|\hat{Z}(Y^n)O - Z - Q\|^2_F & \leq \inf_{O \in \mathcal{O}_{d}} \|\hat{V} \hat{\Lambda}^{1/2} O - V \Lambda^{1/2}\|^2_F.
\end{align}

A few properties of orthonormal matrices are useful in our proof. By definition, the columns of $\hat{V}$ and $V$ have norm 1. Multiplying a matrix by $O$, $V$, or $\hat{V}$ does not modify its Frobenius norm because their columns are orthonormal. Similarly, centering a matrix cannot increase its Frobenius norm, so pre-multiplying or post-multiplying by $\mathcal{C}_n$ does not increase the Frobenius norm. 

Finally, the following bits of notation are useful for keeping the proof succinct. For any $m \in \mathbb{N}$, we let $[m] = \left\{ 1, 2, \ldots, m\right\}$. Also, the \emph{direct sum} of matrices $M_1$ and $M_2$ is defined as
\begin{align}
M_1 \oplus M_2 &:= \left[\begin{array}{cc}
M_1 & 0 \\
0 & M_2
\end{array}\right].
\end{align}

We can now proceed with the details of the proof. Let $Z = V \Lambda^{1/2}$ and $\hat{Z} = \hat{V} \hat{\Lambda}^{1/2}$ be obtained from multidimensional scaling on $D^Z$ and $D^{\hat{Z}(Y^n)}$. Recall that the diagonal entries in $\Lambda$ and $\hat{\Lambda}$ are arranged in decreasing order such that $\lambda_1 \geq \lambda_2 \geq \cdots \geq \lambda_d > 0$ with the same decreasing structure for $\hat{\lambda}$ (but allowing for $\hat{\lambda}_d = 0$). Let $O \in \mathcal{O}_p$ denote a generic orthogonal matrix. First, we note that for any $O \in \mathcal{O}_p$,
\begin{align}
 \|\hat{Z}O - Z\|^2_F &= \| \hat{V} \hat{\Lambda}^{1/2}O - V \Lambda^{1/2} \|^2_F \\
                                     &= \| \hat{V}\hat{\Lambda}^{1/2}O - \hat{V} \Lambda^{1/2}O + \hat{V} \Lambda^{1/2}O  - V\Lambda^{1/2} \|_F^2\\
                                     &\leq 2\| \hat{V}\hat{\Lambda}^{1/2}O - \hat{V} \Lambda^{1/2}O\|_F^2 + 2\| \hat{V} \Lambda^{1/2}O  - V\Lambda^{1/2} \|_F^2\\
                                     &= 2 \|\hat{V} (\hat{\Lambda}^{1/2} - \Lambda^{1/2}) O\|^2_F + 2\| \hat{V} \Lambda^{1/2}O  - V\Lambda^{1/2} \|_F^2\\
                                     &= 2 \|\hat{\Lambda}^{1/2} - \Lambda^{1/2}\|^2_F + 2\| \hat{V} \Lambda^{1/2}O  - V\Lambda^{1/2} \|_F^2  \label{subback}
                                     \end{align}

Furthermore, for an arbitrary matrix $\Phi \in \mathbb{R}^{d \times d}$, we have
\begin{align}
\|\hat{V} \Lambda^{1/2}O  - V\Lambda^{1/2} \|_F^2 &= \| \hat{V} \left(\Lambda^{1/2} - \Phi \right)O  - V\left(\Lambda^{1/2} - \Phi \right) + \hat{V} \Phi O  - V \Phi \|_F^2 \\
& \leq 3 \|\Lambda^{1/2} - \Phi \|^2_F + 3 \|\Lambda^{1/2} - \Phi \|^2_F + 3 \|\hat{V} \Phi O  - V \Phi \|_F^2 \\
& = 6 \|\Lambda^{1/2} - \Phi \|^2_F + 3 \|\hat{V} \Phi O  - V \Phi \|_F^2.
\end{align}
Substituting this back into (\ref{subback}), we have
\begin{align}
 \|\hat{Z}O - Z\|^2_F & \leq  2 \|\hat{\Lambda}^{1/2} - \Lambda^{1/2}\|^2_F  \label{secsum1} \\
 & \;\;\; + 12 \|\Lambda^{1/2} - \Phi \|^2_F\label{secsum2} \\
  & \;\;\; + 6 \|\hat{V} \Phi O  - V \Phi \|_F^2. \label{secsum3}
\end{align}

We begin by establishing a bound on the first summand (\ref{secsum1}). Weyl's inequality \citep{Horn-Johnson-matrix-analysis} tells us that
\begin{align}\label{weyl}
|\hat{\lambda}_i - \lambda_i | &\leq \|\hat{V} \hat{\Lambda} \hat{V}^T - V \Lambda V^T\|_F
\end{align} 
for all $i \in [d]$.  Because centering a matrix cannot increase its Frobenius norm, this implies that
\begin{align}
\|\hat{V} \hat{\Lambda} \hat{V}^T - V \Lambda V^T\|_F &= \| \mathcal{C}_n(D^{\hat{Z}(Y^n)} - D^Z)\mathcal{C}_n\|_F\\
 &\leq \|D^{\hat{Z}(Y^n)} - D^Z \|_F.
\end{align}
Noting that 
\begin{align}
|\hat{\lambda}^{1/2}_i - \lambda_i^{1/2}|  &= \left| \frac{\hat{\lambda}_i - \lambda_i}{\lambda_i^{1/2} + \hat{\lambda}_i^{1/2} } \right|  \leq \left| \frac{\hat{\lambda}_i - \lambda_i}{\lambda_d^{1/2} } \right|,
\end{align}
it thus follows from Weyl's inequality that
\begin{align}
\|\hat{\Lambda}^{1/2} - \Lambda^{1/2}\|^2_F &= \sum_{i=1}^d (\hat{\lambda}^{1/2}_i - \lambda_i^{1/2})^2\\
                                                                       &\leq \frac{\sum_{i=1}^d (\hat{\lambda_i} - \lambda_i)^2}{\lambda_d}\\
                                                                       &\leq \frac{d}{\lambda_d}  \|D^{\hat{Z}(Y^n)} - D^Z \|^2_F. \label{thisthis}
\end{align}
The result in (\ref{thisthis}) is enough to for us to establish concentration of the first summand in (\ref{secsum1}). Next, we consider the second and third summands from (\ref{secsum2}) and (\ref{secsum3}).

It will be convenient to build a diagonal structure on the matrix $\Phi$ as follows. Recall our assumption that the LPM possesses $a(n)-b(n)$ distinctly bunched eigenvalues for functions $a(n)$ and $b(n)$. Define $k \in \mathbb{N}$ and $i_{1}, \ldots, i_{k}, i_{k+1}$ so as to satisfy the requirement of $a(n)-b(n)$ distinctly bunched eigenvalues. For notational convenience, we also define $i_0 := 0$, $\lambda_{0} := \infty$.  Given $k$ and the $i_j$'s, we define the diagonal matrix $\Phi \in \mathbb{R}^{d \times d}$ as
\begin{align}
\Phi &:= \lambda^{1/2}_{i_1} I_{i_2 - i_1} \oplus \lambda^{1/2}_{i_2} I_{i_3 - i_2} \oplus \cdots \oplus \lambda^{1/2}_{i_{k-1}} I_{i_k - i_{k-1}} \oplus \lambda^{1/2}_{i_k} I_{i_{k +1} - i_k}
\end{align}
where $I_{d^*}$ denotes the $d^*\times d^*$ identity matrix for any $d^* \in \mathbb{N}$. 

It follows that 
\begin{align}
 \|\Lambda^{1/2} - \Phi \|^2_F &= \sum_{j=1}^k \sum_{i = i_j}^{i_{j+1} - 1} \left(\lambda_{i}^{1/2} - \lambda_{i_j}^{1/2} \right)^2 \\
 &\leq \sum_{j=1}^k (i_{j+1} - i_j - 1) \left(\lambda_{i_{j+1} - 1}^{1/2} - \lambda_{i_j}^{1/2} \right)^2 \\
  &= \sum_{j=1}^k (i_{j+1} - i_j - 1) \frac{\left(\lambda_{i_{j+1} - 1} - \lambda_{i_j} \right)^2}{\left(\lambda_{i_{j+1} - 1}^{1/2} + \lambda_{i_j}^{1/2} \right)^2} \\
    &\leq \sum_{j=1}^k (i_{j+1} - i_j - 1) \frac{\left(\lambda_{i_{j+1} - 1} - \lambda_{i_j} \right)^2}{4 \lambda_{i_{j+1} - 1} } \\
    &\leq (d - k) \sup_{j \in [k]} \frac{\left(\lambda_{i_{j+1} - 1} - \lambda_{i_j} \right)^2}{4 \lambda_{i_{j+1} - 1} } \\
    &\leq \frac{(d - 1)}{4} a(n),
 \end{align}
 where the last line follows from the fact that the LPM possesses $a(n)-b(n)$ distinctly bunched eigenvalues, as well as the fact that $k \geq 1$.  This provides a bound for (\ref{secsum2}). 
 
 Finally, let's consider  (\ref{secsum3}). Let's impose some additional structure on $O$ by assuming $O = O^{\Phi}$, where $O^{\Phi}$ is a block diagonal orthogonal matrix such that
 \begin{align}
O^{\Phi} &= O^{1} \oplus O^{2} \oplus \cdots \oplus O^{k},
\end{align}
where for each $j \in [k]$, $O^{j}  \in \mathcal{O}_{i_{j+1} - i_{j}}$. Let $\mathcal{O}^{\Phi}$ denote the class of matrices possessing this block diagonal orthogonal structure, and note that $\mathcal{O}^{\Phi} \subseteq \mathcal{O}_p$. Because $\Phi$ and $O^{\Phi}$ possess the same block diagonal structure with each block of $\Phi$ being a scalar matrix, we have commutativity of the multiplication $O^{\Phi} \Phi = \Phi O^{\Phi}$. Therefore,
\begin{align}
\| \hat{V} \Phi O^{\Phi} - V \Phi\|_F^2 =&  \| \hat{V} O^{\Phi} \Phi - V \Phi\|_F^2\\
= &  \sum_{j=1}^k \lambda_{i_j}  \| \hat{V}_{i_{j}:(i_{j+1} - 1)} O^{j} - V_{i_{j}:(i_{j+1} - 1)} \|_F^2 \label{splitup}
\end{align}
by Lemma~\ref{lemma:unfurl}, where $V_{i_{j}:(i_{j+1} - 1)}$ and $\hat{V}_{i_{j}:(i_{j+1} - 1)}$ are the submatrices of $V$ and $\hat{V}$ consisting of columns $i_{j}$ through $i_{j+1} - 1$.

This format of (\ref{splitup}) is amenable to the application of the Davis-Kahan Theorem as stated in \citep{Yu-Wang-Samworth-davis-kahan} (Lemma~\ref{daviskahan}). Recall that this theorem tells us that, for any $1 \leq r \leq s \leq d$ and $q = s - r + 1$,
\begin{align}
\inf_{O \in \mathcal{O}_{q}}  \| \hat{V}_{r:s} O - V_{r:s} \|_F^2 & \leq \frac{2^{3} \|\hat{Z} \hat{Z}^T  - Z Z^T \|^2_F}{\text{min}\left(\lambda_{r-1} - \lambda_{r}, \lambda_{s} - \lambda_{s+1}\right)^2}
\end{align}

Recall that $\|\hat{Z} \hat{Z}^T  - Z Z^T \|^2_F \leq \|D^{\hat{Z}(Y^n)} - D^Z \|^2_F$. For a fixed choice of $\Phi$, we thus have
\begin{align}
\inf_{O \in \mathcal{O}_p}\|\hat{V} \Phi O  - V \Phi \|_F^2 &\leq \inf_{O \in \mathcal{O}^{\Phi}} \|\hat{V} \Phi O  - V \Phi \|_F^2\\
= & \sum_{j=1}^k \lambda_{i_j}  \inf_{O \in \mathcal{O}_{i_{j+1} - i_j}} \| \hat{V}_{i_{j}:(i_{j+1} - 1)} O - V_{i_{j}:(i_{j+1} - 1)} \|_F^2 \\
\leq & \sum_{j=1}^k \lambda_{i_j}  \frac{2^{3} \|D^{\hat{Z}(Y^n)} - D^Z \|^2_F}{\text{min}\left(\lambda_{i_j - 1} - \lambda_{i_j}, \lambda_{i_{j+1} - 1} - \lambda_{i_{j+1}}\right)^2}\\
\leq &  2^{3} k \|D^{\hat{Z}(Y^n)} - D^Z \|^2_F \sup_{j \in [k]} \frac{\lambda_{i_{j}}}{\left(\lambda_{i_{j+1} - 1} - \lambda_{i_{j+1}}\right)^2}\\
\leq &  2^{3} d \|D^{\hat{Z}(Y^n)} - D^Z \|^2_F b(n)^{-1},
\end{align}
due to the assumption that the LPM possesses $a(n)-b(n)$ distinctly bunched eigenvalues and that $k \leq d$.

Bringing together our bounds on the equations (\ref{secsum1}), (\ref{secsum2}), (\ref{secsum3}), we obtain the bound
\begin{align}
\inf_{O \in \mathcal{O}_{d}} \|\hat{Z} O - Z\|^2_F & \leq  3 (d - 1) a(n) + d \left(\frac{2}{\lambda_d} +  \frac{48}{b(n)} \right)  \|D^{\hat{Z}(Y^n)} - D^Z \|^2_F\\
& \leq  3 (d - 1) a(n) + 50 d b(n)^{-1}  \|D^{\hat{Z}(Y^n)} - D^Z \|^2_F
  \end{align}
  because $\lambda_d \geq b(n)$ by the definition $a(n)-b(n)$ distinctly bunched eigenvalues.


The final result then follows from applying Lemma~\ref{consistentdist} to bound $\|D^{\hat{Z}(Y^n)} - D^Z \|^2_F$.  This concludes the proof of Lemma~\ref{consistentpositions}. 
\begin{lemma} \label{lemma:unfurl}
Consider $d, k \in \mathbb{N}$ such that $k \leq d$ and $\ell_1, \ldots, \ell_k \in \mathbb{N}$ such that $\sum_{j=1}^k \ell_j = d$. Define 
\begin{align}
O &= O^1  \oplus O^2  \oplus \cdots \oplus O^k,\\
\Phi &= \phi_1 I_{\ell_1}  \oplus \phi_2 I_{\ell_2}  \oplus \cdots \oplus \phi_k I_{\ell_k},
\end{align}
where $O^1, \ldots, O^k$ are orthogonal matrices satisfying $O^1 \in \mathcal{O}_{\ell_1}, \ldots. O^k \in \mathcal{O}_{\ell_k}$ and $\phi_1, \ldots, \phi_k \in \mathbb{R}$. Define $i_{1} := 1$ and $i_j =: i_{j-1} + \ell_{j-1}$  for $j \in \left\{2, \ldots, k\right\}$. Then,
\begin{align}
\| U O \Phi - V \Phi\|_F^2
& = \sum_{j=1}^k \phi_{j}^2  \| U_{i_{j}:(i_{j} + \ell_j - 1)} O^{j} - V_{i_{j}:(i_{j} + \ell_j - 1)} \|_F^2
\end{align}
for any two matrices $U, V \in \mathbb{R}^{n \times d}$ satisfying $U^T U = I_d$ and $V^T V = I_d$. Here, $U_{a:b}$ and $V_{a:b}$ denote the submatrices of $U$ and $V$ consisting of columns $a$ through $b$.
\end{lemma}

\begin{proof}
Define $A^{1}, \ldots, A^{k} \in \left\{0,1\right\}^{d \times d}$ such that for each $j \in [k]$, 
\begin{align}
A^{j} &:= c^j_{1} I_{\ell_1} \oplus c^j_{2} I_{\ell_2} \oplus \cdots \oplus c^j_{k} I_{\ell_k}
\end{align}
and for $i,j \in [k]$, $c^j_i = 1$ if $i = j$, and is 0 otherwise. Note that each $A^{j}$ is symmetric, diagonal, and idempotent with
\begin{align}
\sum_{j=1}^k A^{j} = I_d.
\end{align}
Furthermore, each $A^{j}$ can be decomposed according to
\begin{align}
A^{j} = a^{j} \left.a^{j} \right.^T
\end{align}
where, for $j \in [k]$, $a^{j} \in \left\{0,1\right\}^{d \times \ell_j}$ such that
\begin{align}
a^{j}_{\ell m} &= \begin{cases}
1 & \text{ if } \ell \in \left\{i_j, i_j + 1 \ldots, i_{j} + \ell_j - 1\right\} \text{ and } m = \ell - i_j + 1\\
0 & \text{ otherwise }.
\end{cases}
\end{align}
For any matrix $M$ with $d$ columns, it follows that
\begin{align}
M_{i_{j}:(i_{j} + \ell_j - 1)} &= M a^{j}.
\end{align}
Each of $A^{1}, \ldots, A^{k}$, $\Phi$, and $O$ share the same block diagonal structure. Moreover, each block of $A^{1}, \ldots, A^{k}$, and $\Phi$ is a scalar matrix. Therefore, we have that $A^{j} \Phi = \Phi A^{j}$, $A^{j} O =O A^{j}$, and $\Phi O = O \Phi$ for any $j \in [k]$.

Applying these properties, as well as the trace definition of the Frobenius norm, we have that
\begin{align*}
 \|U \Phi O - V \Phi \|^2_F &= \text{Tr}\left(O^T \Phi^T U^T U \Phi O + \Phi^T V^T V \Phi - 2 \Phi^T V^T U \Phi O\right) \\
 &= \text{Tr}\left(\Phi^2\right) + \text{Tr}\left(\Phi^2\right) - \text{Tr}\left(2 \Phi^T V^T U \Phi O \right)\\
& = 2\text{Tr}\left( \Phi^2 \right) - 2\text{Tr}\left(\Phi^2   V^T U O \right).
\end{align*}

Focusing on the $\text{Tr}\left(\Phi^2   V^T U O \right)$ term, we have that
\begin{align}
\text{Tr}\left(\Phi^2   V^T U O \right) &= \text{Tr}\left(I_d \Phi^2   V^T U O \right) \\
&= \text{Tr}\left(\sum_{j=1}^k A^{j}  \Phi^2  V^T U O \right) \\
&= \sum_{j=1}^k \text{Tr}\left(A^{j} \Phi^2   V^TU O \right) \\
&= \sum_{j=1}^k \text{Tr}\left(A^{j} A^{j} A^{j} \Phi^2   V^T U O \right) \\
&= \sum_{j=1}^k  \text{Tr}\left(A^{j}  \Phi^2 A^{j}    V^T U A^{j} O \right).
\end{align}

Next, we apply the decomposition $A^{j} = a^{j} \left.a^{j} \right.^T$ described above.

\begin{align}
\sum_{j=1}^k  \text{Tr}\left(A^{j}  \Phi^2 A^{j}    V^T U A^{j} O \right) &= \sum_{j=1}^k  \text{Tr}\left(a^{j} \left.a^{j}\right.^T  \Phi^2 a^{j} \left.a^{j}\right.^T     V^T U a^{j} \left.a^{j}\right.^T   O \right) \\
&= \sum_{j=1}^k \text{Tr}\left( \left.a^{j}\right.^T  \Phi^2 a^{j} \left.a^{j}\right.^T     V^TU a^{j} \left.a^{j}\right.^T  O a^{j} \right) \\
&= \sum_{j=1}^k   \text{Tr}\left(  \Phi^2_{i_{j}:(i_{j} + \ell_j - 1)}   \left(V_{i_{j}:(i_{j} + \ell_j - 1)}\right)^T U_{i_{j}:(i_{j} + \ell_j - 1)} O^{j} \right) \\
&= \sum_{j=1}^k   \phi_j^2 \text{Tr}\left( \left(V_{i_{j}:(i_{j} + \ell_j - 1)}\right)^T U_{i_{j}:(i_{j} + \ell_j - 1)} O^{j} \right) 
\end{align}

Recognizing that $\text{Tr}\left( \Phi^2 \right) = \sum_{j=1}^k \phi_j^2 \text{Tr}\left(I_{\ell_j} \right)$, we thus have:

\begin{align}
 \|U \Phi O - V \Phi \|^2_F &=  \sum_{j=1}^k \phi_j^2 2\text{Tr}\left(I_{\ell_j} - \left(V_{i_{j}:(i_{j} + \ell_j - 1)}\right)^T U_{i_{j}:(i_{j} + \ell_j - 1)} O^{j} \right)\\
 &= \sum_{j=1}^k \phi_{j}^2  \| U_{i_{j}:(i_{j} + \ell_j - 1)} O^{j} - V_{i_{j}:(i_{j} + \ell_j - 1)} \|_F^2
\end{align}
by the definition of the Frobenius norm.

\end{proof}

\subsubsection{Proof of Theorem~\ref{consistencies}}\label{consistenciesproof}
\begin{proof}
We first consider the learnability of the link probabilities. Let $\delta_n = \alpha^K_n G(n) n^{1.5} e(n)^{-1}$ and suppose that $\delta_n \rightarrow 0$. Then,
\begin{align}
\mathbb{P}\left(\frac{\|P^{\hat{z}(Y^n)} - P^{z}\|^2_F}{e(n)} > \delta_n\right) &\leq \mathbb{P}\left(\frac{\|P^{\hat{z}(Y^n)} - P^{z}\|^2_F}{e(n)} > \delta_n e(n) \mid  \sup_{1 \leq i \leq n} \|z_i\| \leq G(n)\right) \\
&+ \mathbb{P}(\sup_{1 \leq i \leq n} \|z_i\| \geq G(n))\\
&\leq \frac{C}{n^2} + \mathbb{P}\left(\sup_{1 \leq i \leq n} \|z_i\| \geq G(n)\right)
\end{align}
by Lemma~\ref{consistentprob}. By the third regularity assumption, this expression converges to 0 in probability. Thus,
\begin{align}
\frac{\|P^{\hat{z}(Y^n)} - P^{z}\|^2_F}{e(n)}  \convp 0
\end{align}
because $\delta_n = o(1)$. The proof follows the same reasoning for squared distances and latent positions, simply swapping out Lemma~\ref{consistentprob} for Lemma~\ref{consistentdist} and Lemma~\ref{consistentpositions}, respectively.
\end{proof}

\subsubsection{Proof of Corollary~\ref{rectangularlearnability} }\label{rectangularlearnabilityproof}

\begin{proof}
The results follow from Theorem~\ref{consistencies}, and the following observations.
Because $g(n) = n^{p/d}$ and $K$ is integrable by Lemma~\ref{distanceswitch}, Theorem~\ref{sparsity} imples that $e(n) = n^{2-p}$. Corollary~\ref{rectangular_eigenstable} implies that the LPM almost surely possesses $a(n)-b(n)$ distinctly bunched eigenvalues with $a(n) = O(n^{2\frac{p}{d} - 1 + \epsilon})$ for any $\epsilon > 0$ (i.e. $a(n) = o(n)$ for $p < d$) and $b(n)^{-1} = O(n^{-1 - 2\frac{p}{d}})$. Table~\ref{differentlinks} shows that for $K(x) = (c + x^2)^{-q}$, $\alpha_n^K \sim \Theta(1)$ and $\beta_n^K \sim \Theta(G(n)^{2q + 4})$.  Recall that $G(n) = \Theta(n^{p/d})$ by Lemma~\ref{rectangularareregular}. Thus,  $\beta^K_n = \Theta( n^{\frac{p}{d}(2q+4)})$. Inserting these values into Theorem~\ref{consistencies} provides the results in points (1) through (3).

Letting $d$ grow large while simultaneously setting $q$ to be the smallest integer larger than $d/2$ allows for learnability of all three targets for values of $p$ that are arbitrarily close to 0.5. Thus, we can have learnability arbitrarily close to $e(n) = n^{1.5}$. 
\end{proof}

The proof above relied on Corollary~\ref{rectangular_eigenstable}. Before presenting this result, we will first establish lemma~\ref{eigenlemma} to control the behavior of the eigenvalues $\lambda_1, \ldots, \lambda_d$. This lemma and its proof were inspired by the work of \citet[Proposition 4.3]{Sussman-Tang-Priebe-consistent}.

\begin{lemma}\label{eigenlemma} 
Let $\lambda_i$ denote the $i$th largest eigenvalue of $\mathcal{C}_nZZ^T\mathcal{C}_n$, where $\mathcal{C}_n Z \in \mathbb{R}^{n \times d}$ is the centered matrix of $n$ latent positions associated with a rectangular LPM generated with $g(n) = n^{p/d}$. Then, if $p < d$ and $i \leq d$,
\begin{align}
\lambda_1 - \lambda_d &= O\left(g(n)^{2}n^{\epsilon} \right) \text{ almost surely} \label{thetanot2}
\end{align}
for any $\epsilon > 0$ and
\begin{align}
\frac{\lambda_i}{ng(n)^2}  \convas \frac{1}{12} \label{thetanot}.
\end{align}
\end{lemma}
\begin{proof}
Recall that both $Z^T\mathcal{C}_n\mathcal{C}_n Z$ and $\mathcal{C}_nZZ^T\mathcal{C}_n$ have the same $d$ non-zero eigenvalues $\lambda_1, \ldots, \lambda_d$.
Furthermore,
\begin{align}
\|Z^T\mathcal{C}_n \mathcal{C}_n Z - \mathbb{E}(Z^T\mathcal{C}_n\mathcal{C}_nZ)\|_F \leq  \|Z^TZ - \mathbb{E}(Z^TZ)\|_F + \frac{\|Z^T1_{n}Z - \mathbb{E}(Z^T 1_{n}Z)\|_F}{n}  \label{aaa}
\end{align}
where $1_{n } \in \mathbb{R}^{n \times n}$ denotes a matrix filled with ones. Suppose $t_{n+1}$ is known, and $\pi$ is a random permutation on $1, \ldots, n$. Then, each $Z^{\pi(i)}$ is uniformly distributed on $[-g(t_{n+1}), g(t_{n+1})]^d$ by Lemma~\ref{prop:indunif}. Thus, after randomly permuting the row indices in $Z$, they can be treated as independent samples from this distribution. Going forward, in a slight abuse of notation, we assume that the rows of $Z$ have been randomly permuted, meaning each row can be treated as an iid uniform sample on $[-g(t_{n+1}), g(t_{n+1})]^d$.

Therefore, $(Z^TZ)_{ij} = \sum_{k=1}^n Z_{ki} Z_{kj}$ is the sum of $n$ iid random variables, and each summand's absolute value is upper bounded by $ng(t_{n+1})^2$. The Hoeffding bound \citep{Hoeffding-on-Hoeffding} provides us with the following concentration result.
\begin{align}
\mathbb{P}(|(Z^TZ)_{ij} - \mathbb{E}((Z^TZ)_{ij})| > \delta) \leq 2 \exp{\left(  \frac{- 2\delta^2}{ g(t_{n+1})^4} \right)}.
\end{align}
Combining this with a union bound provides
\begin{align}
\mathbb{P}(\|(Z^TZ) - \mathbb{E}((Z^TZ))\|_F > d^2\delta) \leq 2d^2 \exp{\left(  \frac{- 2\delta^2}{ g(t_{n+1})^4} \right)}. \label{bbb}
\end{align}

Furthermore, $(Z^T1_n Z)_{ij} = (\sum_{k=1}^n Z_{ki}) (\sum_{k=1}^n Z_{kj})$ with both factors in this product being identically distributed. Moreover, the entries in either summand are bounded in absolute value according to  $|Z_{kj}| \leq g(t_{n+1})$. Therefore, applying the Hoeffding bound to each entry in $Z^TZ$ yields
\begin{align}
\mathbb{P}(|(Z^T1_n Z)_{ij} - \mathbb{E}((Z^T1_n Z)_{ij})| > \delta) &\leq 2\mathbb{P}(|(\sum_{k=1}^n Z_{ki}) - \mathbb{E}(\sum_{k=1}^n Z_{ki})) | > \sqrt{\delta})\\
                    &\leq 4 \exp{\left(  \frac{- 2 \delta}{ g(t_{n+1})} \right)},
\end{align}
achieved through a union bound on the two summands differing from their means by $\sqrt{\delta}$. Another union bound over all matrix entries results in \begin{align}
\mathbb{P}\left(\|Z^T1_{n}Z - \mathbb{E}(Z^T 1_{n}Z)\|_F > d^2 \delta \right) \leq 4d^2 \exp{\left(  \frac{- 2 \delta}{ g(t_{n+1})} \right)}. \label{ccc}
\end{align}

Combining Equations (\ref{aaa}), (\ref{bbb}), and (\ref{ccc}) yields
\begin{align}
\mathbb{P}\left(\|Z^T\mathcal{C}_n \mathcal{C}_n Z - \mathbb{E}(Z^T\mathcal{C}_n\mathcal{C}_nZ)\|_F >  2d^2 \delta \right) \leq 2d^2 \exp{\left(  \frac{- 2\delta^2}{ g(t_{n+1})^4} \right)} + 4d^2 \exp{\left(  \frac{- 2 \delta}{ g(t_{n+1})} \right)}.
\end{align}
To translate this into a result for the eigenvalues, we can apply Weyl's inequality \citep{Horn-Johnson-matrix-analysis} to obtain
\begin{align}
\mathbb{P}\left(|\lambda_i(Z^T\mathcal{C}_n \mathcal{C}_n Z) - \lambda_i(\mathbb{E}(Z^T\mathcal{C}_n \mathcal{C}_n Z))| > 2d^2 \delta\right) \leq 2d^2 \exp{\left(  \frac{- 2\delta^2}{ g(t_{n+1})^4} \right)} + 4d^2 \exp{\left(  \frac{- 2 \delta}{ g(t_{n+1})} \right)} \label{choosedelta}
\end{align}
for $1\leq i \leq d$. 
We can analytically determine the values of $\lambda_i$ by noting that
\begin{align}
\mathbb{E}(Z^T\mathcal{C}_n \mathcal{C}_n Z)_{ii} &= \frac{(n-1)g(t_{n+1})^2}{12}\\
\mathbb{E}(Z^T\mathcal{C}_n \mathcal{C}_n Z)_{i \neq j} &= 0,
\end{align}
which indicates that 
\begin{align}
\lambda_i(\mathbb{E}(Z^T\mathcal{C}_n \mathcal{C}_n Z)) &= \frac{(n-1)g(t_{n+1})^2}{12}
\end{align}
for $i \leq d$, 0 otherwise. Substituting these values into (\ref{choosedelta}) we obtain
\begin{align}
\mathbb{P}\left(\left|\lambda_i(Z^T\mathcal{C}_n \mathcal{C}_n Z) - \frac{(n-1)g(t_{n+1})^2}{12}\right| > 2d^2 \delta\right) \leq 2d^2 \exp{\left(  \frac{- 2\delta^2}{ g(t_{n+1})^4} \right)} + 4d^2 \exp{\left(  \frac{- 2 \delta}{ g(t_{n+1})} \right)} \label{choosedelta}
\end{align}
for $1\leq i \leq d$. Choosing $\delta = \epsilon (n-1)g(t_{n + 1})^{2}$ for $\epsilon > 0$  yields:
\begin{align}
\mathbb{P}\left(\left|\frac{\lambda_i(Z^T\mathcal{C}_n \mathcal{C}_n Z)}{(n-1)g(t_{n+1})^2} - \frac{1}{12}\right| > 2d^2 \epsilon \right) \leq 2d^2 \exp{\left(- 2\epsilon^2 (n-1)^2 \right)} + 4d^2 \exp{\left(  - 2(n-1) \epsilon g(t_{n+1})\right)}. \label{subhere}
\end{align}
Due to its reliance of the random quantity $g(t_{n+1})$, this bound is difficult to directly use. Instead, let us upper bound it with a deterministic quantity. Lemma~\ref{waitingtimes} established that $t_{n+1}$ follows Gamma($n+1$, 1). Therefore,
\begin{align*}
\sum_{n=1}^{\infty} \mathbb{P}\left(t_{n} \leq 1 \right) &= \sum_{n=1}^{\infty} \int_{0}^1 \frac{1}{\Gamma(n)} t^{n-1} e^{-t} \text{dt}\\
&\leq  \sum_{n=1}^{\infty} \frac{1}{\Gamma(n)} \\
&=e.
\end{align*}
The Borel-Cantelli lemma thus implies that $\mathbb{P}\left(t_{n \in \mathbb{N}} \leq 1 \text{ infinitely often } \right) = 0$. Therefore, there almost-surely exists an $n^* < \infty$ such that $g(t_{n}) \geq 1$ for all $n \geq n^*$. Moreover, note that $g(t_{n}) \in (0, 1]$ for $n < n^*$. In turn, we can incorporate the upper bound (\ref{subhere}) to determine
\begin{align}
&\sum_{n=1}^{\infty} \mathbb{P}\left(\left|\frac{\lambda_i(Z^T\mathcal{C}_n \mathcal{C}_n Z)}{(n-1)g(t_{n+1})^2} - \frac{1}{12}\right| > 2d^2 \epsilon \right)\\ 
\leq & \sum_{n=1}^{\infty}  \left(2d^2 \exp{\left(- 2\epsilon^2 (n-1)^2 \right)} + 4d^2 \exp{\left(  - 2(n-1) \epsilon g(t_{n+1})\right)}\right)\\
\leq & \sum_{n=1}^{\infty}  2d^2 \exp{\left(- 2\epsilon^2 (n-1)^2 \right)} + 4d^2  \left(\sum_{n=1}^{n^{*}} 1 + \sum_{n=n^{*}}^{\infty} \exp{\left(  - 2(n-1) \epsilon g(t_{n+1})\right)}\right) \\
\leq & \sum_{n=1}^{\infty} 2d^2 \exp{\left(- 2(n-1)^2 \epsilon^2 \right)} + 4d^2 \left(n^* + \sum_{n=n^*}^{\infty} \exp{\left(  - 2(n-1) \epsilon \right)}\right)\\
< & \infty \text{ almost surely}.
\end{align}

We can thus apply the Borel-Cantelli Lemma to establish that 
\begin{align}
\frac{\lambda_i(Z^T\mathcal{C}_n \mathcal{C}_n Z)}{(n-1)g(t_{n+1})^2} & \convas \frac{1}{12}.
\end{align}
This result implies (\ref{thetanot}) by the following reasoning. 

Lemma~\ref{waitingtimes} established that $t_{n+1} (n+1)^{-1} \convas 1$. Therefore,
\begin{align}
g\left(\frac{t_{n+1}}{n+1} \right) = \frac{t_{n+1}^{p/d}}{g(n)} \convas g(1) = 1
\end{align}
by the continuous mapping theorem. Recognizing that $n/(n-1) \rightarrow 1$ and $g(n)/g(n-1) \rightarrow 1$ as $n$ goes to infinity, we thus have that
\begin{align}
\frac{\lambda_i(Z^T\mathcal{C}_n \mathcal{C}_n Z)}{ng(n)^2} & \convas \frac{1}{12},
\end{align}
giving us the result in (\ref{thetanot}).

Next, we will provide the result in (\ref{thetanot2}). We recognize that 
\begin{align}
&\mathbb{P}\left(|\lambda_1 - \lambda_d| > 4d^2 \delta \right)\\ & \leq  \mathbb{P}\left(\left|\lambda_1 - \frac{(n-1)g(t_{n+1})^2}{12}\right| + \left|\lambda_ d - \frac{(n-1)g(t_{n+1})^2}{12}\right|  > 4d^2 \delta \right) \\
                                                                                                & \leq  \mathbb{P}\left(\left|\lambda_1 - \frac{(n-1)g(t_{n+1})^2}{12}\right|  > 2d^2 \delta \right) + \mathbb{P}\left(\left|\lambda_d - \frac{(n-1)g(t_{n+1})^2}{12}\right|  > 2d^2 \delta \right)\\
                                                                                                &\leq 4d^2 \exp{\left(  \frac{- 2\delta^2}{ g(t_{n+1})^4} \right)} + 8d^2 \exp{\left(  \frac{- 2 \delta}{ g(t_{n+1})} \right)}                                                                                     
\end{align}
by (\ref{choosedelta}). Choosing $\delta = g(t_{n+1})^{2}n^{\epsilon}$ for $\epsilon > 0$ implies that
\begin{align}
\mathbb{P}\left(\frac{|\lambda_1 - \lambda_d|}{g(t_{n+1})^{2 + \epsilon}} > 4d^2 \right) &\leq 4d^2 \exp{\left(- 2n^{2 \epsilon} \right)} + 8d^2 \exp{\left(  - 2g(t_{n+1}) n^{\epsilon} \right)}  
\end{align}
Recall that there almost-surely exists an $n^* < \infty$ such that $g(t_{n}) \geq 1$ for all $n \geq n_*$. In turn, we can once again apply Borell-Cantelli in the same manner
\begin{align}
& \sum_{n=1}^{\infty} \mathbb{P}\left(\frac{|\lambda_1 - \lambda_d|}{g(t_{n+1})^{2} n^{ \epsilon}} > 4d^2 \right) \\
\leq & 4d^2 \sum_{n=1}^{\infty}  \left(\exp{\left(- 2n^{2 \epsilon} \right)} + 2 \exp{\left(  - 2g(t_{n+1}) n^{\epsilon} \right)}   \right)\\
\leq  & 4d^2 \left(\sum_{n=1}^{\infty} \exp{\left(- 2n^{2 \epsilon} \right)} + 2n^* + \sum_{n=n^*}^{\infty} \exp{\left(  - 2n^{\epsilon} \right)}   \right)\\
< & \infty \text{ almost surely}.
\end{align}
The Borell-Cantelli lemma thus implies the result in (\ref{thetanot2}).
\end{proof}

We can now use these results to determine that rectangular latent position network models possess $a(n)-b(n)$ distinctly bunched eigenvalues. We state this result as Corollary~\ref{rectangular_eigenstable}.

\begin{cor}\label{rectangular_eigenstable}
A $d$-dimensional rectangular latent position network model generated with $g(n) = n^{p/d}$ almost surely possesses $a(n)-b(n)$ distinctly bunched eigenvalues for $a(n) = O\left(n^{2\frac{p}{d} - 1 + \epsilon}\right)$ and $b(n)^{-1} = O\left(n^{-1 - 2\frac{p}{d}}\right)$ for any $\epsilon > 0$.
\end{cor}
\begin{proof}
Lemma~\ref{eigenlemma} established that
\begin{align}
\frac{\lambda_i}{n^{1 + 2 \frac{p}{d}}} \convas \frac{1}{12}
\end{align}
 for all $i = 1, \ldots, d$. The continuous mapping theorem thus implies that
 \begin{align}
\frac{1}{\lambda_d^2} &= O\left(n^{-2 - 4 \frac{p}{d}} \right),\\
\frac{\lambda_1}{\lambda_d^2} &= O(n^{-1 - 2\frac{p}{d}})
\end{align}
almost surely.
 Moreover, (\ref{thetanot2}) from Lemma~\ref{eigenlemma} implies that, almost surely,
\begin{align}
(\lambda_1 - \lambda_d)^2 & = O( n^{4\frac{p}{d} + \epsilon}),
\end{align}
for any $\epsilon > 0$. Therefore,
\begin{align}
\frac{(\lambda_1 - \lambda_d)^2}{\lambda_d} &= O\left(n^{2\frac{p}{d} - 1 + \epsilon} \right).
\end{align}
The LPM is thus almost surely meets the criteria for possessing $a(n)-b(n)$ distinctly bunched eigenvalues with $k=1$, $i_1 = 1$, $a(n) = O(n^{2\frac{p}{d} - 1 + \epsilon})$ and $b(n)^{-1} = O(n^{-1 - 2\frac{p}{d}})$ for $\epsilon > 0$.
\end{proof}

\subsubsection{Proof of Corollary~\ref{hofflearnability}}\label{hofflearnabilityproof}
\begin{proof}
Note that this LPM is regular with $G(n) = \sqrt{2 \sigma_1^2 (1+c) \log(n)}$ for any $c > 0$ by Lemma~\ref{exchangeableareregular}, and $e(n) = n^2$. Furthermore, Corollary \ref{hoffeigenstable} establishes that this LPM is almost surely possesses $a(n)-b(n)$ distinctly bunched eigenvalues for $a(n) = O(1)$ and $b(n)^{-1} = O(n^{-1})$. Consulting Table~\ref{differentlinks}, we see that for both link functions $\alpha^K_n = \Theta(1)$ and $\beta^K_n = \Theta(e^{G(n)^2}) = \Theta(n^{2\sigma^2 (1+c)})$. Thus, applying Theorem~\ref{consistencies} indicates that we have learnable latent positions and distances provided that $2\sigma_1^2(1+c) < 1/2$. 
\end{proof}

The above proof relied Corollary~\ref{hoffeigenstable} to establish that the LPM with Gaussian latent positions possesses $a(n)-b(n)$ distinctly bunched eigenvalues. Before presenting this result, we first present Lemma~\ref{gaussianeigen}, as it facilitates its proof.

\begin{lemma}\label{gaussianeigen}
Let the rows of $Z \in \mathbb{R}^{n \times d}$ independently follow a multivariate Gaussian distribution with mean zero and diagonal variance matrix $\Sigma$. Let $\sigma^2_1, \ldots, \sigma^2_d$ denote the entries along the diagonal of $\Sigma$, with $\sigma_1 \geq \sigma_2 \geq \cdots \geq \sigma_d > 0$. Let $\lambda_i$ denote the $i$th largest eigenvalue of $\mathcal{C}_nZZ^T\mathcal{C}_n$, where $\mathcal{C}_n$ is defined as in (\ref{doublecentering}). Then,
\begin{align}
\frac{\lambda_i}{n}  \convas \sigma_i^2. 
\end{align}
Moreover, $|\lambda_i - \lambda_j| = O(1)$ almost surely whenever $\sigma_i = \sigma_j$.
\end{lemma}
\begin{proof}
The proof proceeds very similarly as for Lemma~\ref{eigenlemma}. Recall that both $Z^T\mathcal{C}_n\mathcal{C}_n Z$ and $\mathcal{C}_nZZ^T\mathcal{C}_n$ have the same $d$ non-zero eigenvalues $\lambda_1, \ldots, \lambda_d$. Furthermore,
\begin{align}
\|Z^T\mathcal{C}_n \mathcal{C}_n Z - \mathbb{E}(Z^T\mathcal{C}_n\mathcal{C}_nZ)\|_F \leq  \|Z^TZ - \mathbb{E}(Z^TZ)\|_F + \frac{\|Z^T1_{n}Z - \mathbb{E}(Z^T 1_{n}Z)\|_F}{n}  \label{aaanew}
\end{align}
where $1_{n } \in \mathbb{R}^{n \times n}$ denotes a matrix filled with ones. Note that
\begin{align}
\|Z^TZ - \mathbb{E}(Z^TZ)\|_F \leq \sum_{i=1}^d \sum_{j=1}^d |Z^TZ_{ij} - \mathbb{E}(Z^TZ)_{ij}|
\end{align}
Applying \citet[Lemma 1]{Ravikumar-et-al-covariance-estimation} and the union bound yields
\begin{align}
\mathbb{P}\left(\|Z^TZ - \mathbb{E}(Z^TZ)\|_F > d^2\delta \right) \leq 4d^2 \exp{\left(\frac{-n \delta^2}{3200 \sigma_1^2}  \right)}. \label{bbbnew}
\end{align}

Furthermore, note that $(Z^T1_n Z)_{ij}/n = (\sum_{k=1}^n Z_{ki}/\sqrt{n}) (\sum_{k=1}^n Z_{kj}/\sqrt{n})$ can be viewed as the product of two Gaussian distributed random variables with variances $\sigma_i^2$ and $\sigma_j^2$ respectively. This means that $(Z^T1_n Z)/n$ can be viewed as $u u^T$ where $u$ is a $d$-dimensional Gaussian vector with mean 0 and variance matrix $\Sigma$. Again applying \citet[Lemma 1]{Ravikumar-et-al-covariance-estimation} and the union bound yields
\begin{align}
\mathbb{P}(\|(Z^T1_n Z)_{ij} - \mathbb{E}((Z^T1_n Z)_{ij})\|_F > d^2\delta) &\leq 4d^2 \exp{\left(  \frac{- n\delta^2}{3200 \sigma_1^2} \right)}. \label{cccnew}
\end{align}

Combing Equations (\ref{aaanew}), (\ref{bbbnew}), and (\ref{cccnew}) yields
\begin{align}
\mathbb{P}\left(\|Z^T\mathcal{C}_n \mathcal{C}_n Z - \mathbb{E}(Z^T\mathcal{C}_n\mathcal{C}_nZ)\|_F >  2d^2 \delta \right) \leq 8d^2 \exp{\left(\frac{-n \delta^2}{3200 \sigma_1^2}  \right)}.\end{align}
To translate this into a result for the eigenvalues, we can apply Weyl's inequality \citep{Horn-Johnson-matrix-analysis}. This results in
\begin{align}
\mathbb{P}(|\lambda_i(Z^T\mathcal{C}_n \mathcal{C}_n Z) - \lambda_i(\mathbb{E}(Z^T\mathcal{C}_n \mathcal{C}_n Z))| > 2d^2 \delta) \leq 8d^2 \exp{\left(\frac{-n \delta^2}{3200 \sigma_1^2}  \right)} \label{choosedeltanew}
\end{align}
for $1\leq i \leq d$. 
We can analytically determine the values of $\lambda_i$ by noting that 
\begin{align}
\mathbb{E}(Z^T\mathcal{C}_n \mathcal{C}_n Z)_{ii} &= (n-1)\sigma_i^2\\
\mathbb{E}(Z^T\mathcal{C}_n \mathcal{C}_n Z)_{i \neq j} &= 0,
\end{align}
which indicates that 
\begin{align}
\lambda_i(\mathbb{E}(Z^T\mathcal{C}_n \mathcal{C}_n Z) &= (n-1)\sigma^2_i
\end{align}
for $i \leq d$, 0 otherwise. Substituting this into (\ref{choosedeltanew}) and setting $\delta = (2d^2)^{-1} (n-1) \epsilon$ for $\epsilon > 0$, we see that
\begin{align}
\mathbb{P}\left(\left|\frac{\lambda_i(Z^T\mathcal{C}_n \mathcal{C}_n Z)}{n-1} - \sigma_i^2 \right| > \epsilon\right) \leq 8d^2 \exp{\left(\frac{-n (n-1)^2 \epsilon^2}{12800 d^4 \sigma_1^2}  \right)}
\end{align}
It follows that
\begin{align}
\sum_{n=1}^{\infty} \mathbb{P}\left(\left|\frac{\lambda_i(Z^T\mathcal{C}_n \mathcal{C}_n Z)}{n-1} - \sigma_i^2 \right| > \epsilon\right) &\leq \sum_{n=1}^{\infty} 8d^2 \exp{\left(\frac{-n (n-1)^2 \epsilon^2}{12800 d^4 \sigma_1^2}  \right)}\\
& < \infty.
\end{align}
The Borel Cantelli lemma thus implies that
\begin{align}
\frac{\lambda_i(Z^T\mathcal{C}_n \mathcal{C}_n Z)}{n}  &\convas \sigma_i^2
\end{align}
because $n^{-1}(n-1) \rightarrow 1$ as $n$ goes to infinity.

Moreover, suppose that $\sigma_{i} = \sigma_{j}$ for some pair $i,j \in \left\{1, \ldots, d\right\}$. Then, using the triangle inequality, a union bound, and (\ref{choosedeltanew}), we see that
\begin{align}
& \mathbb{P}(|\lambda_i(Z^T\mathcal{C}_n \mathcal{C}_n Z) - \lambda_j(Z^T\mathcal{C}_n \mathcal{C}_n Z) | > 2d^2 \delta)\\
& \leq \mathbb{P}(|\lambda_i(Z^T\mathcal{C}_n \mathcal{C}_n Z) - (n-1)\sigma_i^2 | > 2d^2 \delta) + \mathbb{P}(|\lambda_j(Z^T\mathcal{C}_n \mathcal{C}_n Z) - (n-1)\sigma_i^2 | > 2d^2 \delta)\\
&\leq 16d^2 \exp{\left(\frac{-n \delta^2}{3200 \sigma_1^2}  \right)}. \label{aqaq}
\end{align}
Setting $\delta = (2d^2)^{-1} \epsilon$ for $\epsilon > 0$, we have that
\begin{align}
\sum_{n=1}^{\infty} \mathbb{P}(|\lambda_i(Z^T\mathcal{C}_n \mathcal{C}_n Z) - \lambda_j(Z^T\mathcal{C}_n \mathcal{C}_n Z) | > \epsilon \delta) & \leq \sum_{n=1}^{\infty} 16d^2 \exp\left(\frac{-n \epsilon^2}{12800d^4 \sigma_1^2 } \right)\\
& < \infty.
\end{align}
Again applying the Borel Cantelli lemma, we thus have that 
\begin{align}
|\lambda_i(Z^T\mathcal{C}_n \mathcal{C}_n Z) - \lambda_j(Z^T\mathcal{C}_n \mathcal{C}_n Z) |  &= O(1)
\end{align}
almost surely.
\end{proof}

We can now proceed with the proof of Corollary~\ref{hoffeigenstable}.

\begin{cor}\label{hoffeigenstable}
Consider a LPM on $S = \mathbb{R}^d$ with each latent position independently and identically distributed according to the multivariate Gaussian distribution with mean zero and diagonal variance matrix $\Sigma$. Let $\sigma^2_1, \ldots, \sigma^2_d$ denote the entries along the diagonal of $\Sigma$, with $\sigma_1 \geq \sigma_2 \geq \cdots \geq \sigma_d > 0$.This LPM almost surely possesses $a(n)-b(n)$ distinctly bunched eigenvalues for $a(n) = O\left(1\right)$ and $b(n)^{-1} = O(n^{-1})$.
\end{cor}

\begin{proof}
Lemma~\ref{gaussianeigen} established that
\begin{align}
\frac{\lambda_i}{n} \convas \sigma_i^2
\end{align}
 for all $i = 1, \ldots, d$. The continuous mapping theorem thus implies that
 \begin{align}
\frac{n \lambda_i}{\left(\lambda_{i} - \lambda_j\right)^2} & \convas \frac{\sigma_i^2}{(\sigma_i^2 - \sigma_j^2)}
\end{align}
for any $i, j \in \left\{1, \ldots, d\right\}$ for which $\sigma_i \neq \sigma_j$. Moreover, Lemma~\ref{gaussianeigen} also implies that
\begin{align}
(\lambda_i - \lambda_j)^2 & = O(1),
\end{align}
almost surely whenever $\sigma_i = \sigma_j$. Therefore,
\begin{align}
\frac{(\lambda_i - \lambda_j)^2}{\lambda_j} &= O\left(\frac{1}{n}\right)
\end{align}
almost surely when $\sigma_i = \sigma_j$. Let $k \leq d$ be given by number of distinct values in that the sequence $\sigma_1, \ldots, \sigma_d$. Let $\sigma_{(1)} > \sigma_{(2)} > \cdots > \sigma_{(k)}$ denote these distinct values. For $j = 1, \ldots, k$ define $i_j$ to be the smallest element of $\left\{1, \ldots, d\right\}$ such that $\sigma_{i_j} = \sigma_{(j)}$. Using these values of $k$ and $i_{1}, \ldots, i_{k}$, the LPM is thus almost surely meets the criteria for possessing $a(n)-b(n)$ distinctly bunched eigenvalues with $a(n) = O(1)$ and $b(n)^{-1} = O(n^{-1})$.
\end{proof}

\subsubsection{Proof of Corollary~\ref{learnablesparsegraphon}}\label{learnablesparsegraphonproof}

\begin{proof}
The proof set-up is almost identical to that of Corollary~\ref{hofflearnabilityproof}, with the sole departure being that $\beta^K_n$ is also scaled by the inverse of sparsity term $s(n)^{-1} = n^p$ resulting in $\beta^K_n = \Theta(n^{2\sigma^2(1+c) + p})$ requiring that $2p < 1- 4\sigma^2(1+c)$. By choosing a small value of $\sigma^2$, we can have $p$ get arbitrarily close to $1/2$.
\end{proof}

\subsection{More Details regarding Sparse graphon-based LPMs}

\subsubsection{Proof of Theorem~\ref{triangles}}\label{prooftriangles}

\begin{proof}
For any $\delta > 0$, the definition of $K_n$ implies that
\begin{align}
s_n s_1^{-1} K_1(\delta) \leq K_n(\delta) \leq s_n K(\delta).
\end{align}
Therefore, 
\begin{align}
\PP(Y^n_{ij} =  1, Y^n_{ik}=1, Y^n_{jk} = 1) &= \mathbb{E}\left(K_n(\delta(Z_i, Z_j)) K_n(\delta(Z_i, Z_k)) K_n(\delta(Z_j, Z_k))\right) \\
                                                                                &\leq s_n^3 \mathbb{E}\left(K(\delta(Z_i, Z_j)) K(\delta(Z_i, Z_k)) K(\delta(Z_j, Z_k))\right) \\
                                                                                &= C_1 s_n^3
\end{align}
for some $C_1 > 0$ that is independent of $n$ where $Z_i, Z_j, Z_k \sim f$ independently. Moreover,
\begin{align}
\PP(Y^n_{ij} =  1, Y^n_{ik}=1) &= \mathbb{E}\left(K_n(\delta(Z_i, Z_j)) K_n(\delta(Z_i, Z_k)))\right) \\
                                                                                &\geq s_n^2 s_1^{-1} \mathbb{E}\left(K_1(\delta(Z_i, Z_j)) K_1(\delta(Z_i, Z_k)))\right) \\
                                                                                &= C_2 s_n^2
\end{align}
for some $C_2 > 0$ independent of $n$ because $K_1(\delta) \leq 1$ and $s_1 \in \mathbb{R}+$.  Turning to the probability of triadic closure, we thus have that
\begin{align}
\PP(Y^n_{ij} =  1, Y^n_{ik}=1, Y^n_{jk} = 1)  &= \frac{\PP(Y^n_{ij} =  1,  Y^n_{ik}=1, Y^n_{jk} = 1)}{\PP(Y^n_{ij} =  1, Y^n_{ik}=1)}\\
&\leq \frac{C_1 s_n^3}{C_2 s_n^2}\\
&\propto s_n \rightarrow 0 \text{ \;\;\; as $n \rightarrow \infty$.}
\end{align}

\end{proof}

\subsubsection{Proof of Theorem~\ref{triangles2}}\label{prooftriangles2}

\begin{proof}
Note that because we have assumed the link function is regular, it must be continuous, non-negative, non-increasing, and not identically zero. Therefore, there must exist a $c \in (0, 1]$ and $\epsilon > 0$ such that $r \leq 2 \epsilon$ implies $K(r)  \geq c$. Let $\Omega(n) = [-g(t_{n+1}), g(t_{n+1})]^d$ and $\Omega_{+}(n) = [0, g(t_{n+1})]^d$. Following Lemma~\ref{uniformZ}, we have that
\begin{align*}
& \PP(Y^n_{\pi(i)\pi(j)} =  1, Y^n_{\pi(i) \pi(k)}=1, Y^n_{\pi(j)\pi(k)} = 1)\\
=& \int_{x,y,z \in \Omega(n)} \frac{1}{2^{3d} g(t_{n+1})^{3d}} K(\|x - y\|) K(\|x - z\|) K(\|y-z\|) \text{d}z \text{d}y \text{d} x\\
\geq  & \int_{x,y,z \in \Omega(n)} \frac{1}{2^{3d} g(t_{n+1})^{3d}} c^3 I(\|x - y\| \leq \epsilon) I(\|x - z\| \leq \epsilon)  \text{d}z \text{d}y \text{d} x\\
\geq  & \int_{x,y,z \in \Omega_{+}(n)} \frac{1}{2^{3d} g(t_{n+1})^{3d}} c^3 I(\|x - y\| \leq \epsilon) I(\|x - z\| \leq \epsilon)  \text{d}z \text{d}y \text{d} x\\
\geq  & \int_{x,y,z \in \Omega_{+}(n)} \frac{1}{2^{3d} g(t_{n+1})^{3d}} c^3 I(\|y\| \leq\epsilon) I(\|z\| \leq\epsilon)  \text{d}z \text{d}y \text{d} x\\
\propto  &  \frac{1}{g(t_{n+1})^{2d}} \int_{y,z \in \Omega_{+}(n)}  I(\|y\| \leq\epsilon) I(\|z\| \leq \epsilon)  \text{d}z \text{d}y\\
=& \frac{\pi^d \epsilon^{2d}}{g(t_{n+1})^{2d} 2^{2d} \Gamma(d/2 + 1)^2}\\
\propto & g(t_{n+1})^{-2d}.
\end{align*}
Moreover, note that the function
\begin{align}
\int_{y,z \in \Omega(n)} \frac{1}{2^{3d} g(t_n+1)^{3d}} K(\|x - y\|) K(\|x - z\|) \text{d}z \text{d}y
\end{align}
is maximized at $x=0$. Therefore,

\begin{align}
& \PP(Y^n_{\pi(i)\pi(j)} =  1, Y^n_{\pi(i) \pi(k)}=1)\\
=& \int_{x,y,z \in \Omega(n)} \frac{1}{2^{3d} g(t_{n+1})^{3d}} K(\|x - y\|) K(\|x - z\|) \text{d}z \text{d}y \text{d} x\\
\leq & \int_{x,y,z \in \Omega(n)} \frac{1}{2^{3d} g(t_{n+1})^{3d}} K(\|y\|) K(\|z\|) \text{d}z \text{d}y \text{d} x\\
=& \frac{1}{2^{2d} g(t_{n+1})^{2d}} \int_{y,z \in \Omega(n)} K(\|y\|) K(\|z\|) \text{d}z \text{d}y\\
\propto& g(t_{n+1})^{-2d}
\end{align}
by Lemma~\ref{distanceswitch}. Turning to the probability of triadic closure, we thus have that
\begin{align}
& \PP(Y^n_{\pi(i)\pi(j)} =  1 \mid Y^n_{\pi(i) \pi(k)}=1, Y^n_{\pi(j)\pi(k)} = 1)\\
=& \frac{\PP(Y^n_{\pi(i)\pi(j)} =  1, Y^n_{\pi(i) \pi(k)}=1, Y^n_{\pi(j)\pi(k)} = 1) }{\PP(Y^n_{\pi(i)\pi(j)} =  1, Y^n_{\pi(i) \pi(k)}=1)}\\
\geq & \frac{\int_{x,y,z \in \Omega_{+}(n)} \frac{1}{2^{3d} g(t_{n+1})^{3d}} c^3 I(\|y\| < \epsilon) I(\|z\| < \epsilon)  \text{d}z \text{d}y \text{d} x }{\frac{1}{2^{2d} g(t_{n+1})^{2d}} \int_{y,z \in \Omega(n)} K(\|y\|) K(\|z\|) \text{d}z \text{d}y }\\
\propto & \frac{g(t_{n+1})}{g(t_{n+1})} = 1,
\end{align}
thus establishing the result.
\end{proof}

\subsection{Towards a Negative Learnability Result}\label{negatives}

In this section, we establish conditions under which regular LPMs are not learnable.

\begin{theorem}\label{negative}
Consider a regular LPM. Let $n$ denote the number of nodes. Suppose
\begin{align}
\lim_{n \rightarrow \infty } n^2 K\left(\frac{G(n)}{1+c}\right) \rightarrow 0. \label{disconnected}
\end{align}
for some $c > 0$, then this class of LPMs do not have learnable latent positions.
\end{theorem}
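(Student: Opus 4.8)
The plan is to prove non-learnability by a two-point (Le Cam) argument: I will construct two deterministic families of latent positions, both of which are regular LPMs with the prescribed $K$ and $G$, that are far apart in the quotient distance appearing in the definition of learnable positions yet induce asymptotically indistinguishable laws on $Y^n$. Following the intuition in Section~\ref{conclusion}, I split the $n$ nodes into two clusters of size $\lfloor n/2\rfloor$ and $\lceil n/2\rceil$, placing every node of the first cluster at $+(L/2)e_1$ and every node of the second at $-(L/2)e_1$ for a fixed unit vector $e_1$. Configuration~1 uses separation $L_1 = G(n)/(1+c)$ and configuration~2 uses $L_2 = 2G(n)$; both keep all positions within norm $G(n)$, so regularity criterion~3 holds trivially (its indicator is identically $1$) while criteria~1--2 hold by hypothesis. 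Hence both lie in the class under consideration, and a single estimator $\hat{Z}(Y^n)$ would have to be consistent for each.

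First I would bound the total variation distance between the induced laws $\mathbb{P}^{(1)}_n$ and $\mathbb{P}^{(2)}_n$ of $Y^n$. Because the within-cluster squared distances are identically zero in both configurations, the two within-cluster edge blocks are identically distributed; the laws therefore differ only on the $\lfloor n/2\rfloor\lceil n/2\rceil$ cross-cluster dyads, which are independent Bernoulli$(K(L_1))$ under $\mathbb{P}^{(1)}_n$ and Bernoulli$(K(L_2))$ under $\mathbb{P}^{(2)}_n$. Since product measures with a common marginal contribute nothing and total variation is subadditive across independent coordinates, monotonicity of $K$ gives
\begin{align}
d_{TV}(\mathbb{P}^{(1)}_n, \mathbb{P}^{(2)}_n) \leq \tfrac14 n^2\,\bigl|K(L_1) - K(L_2)\bigr| \leq \tfrac14 n^2\, K\!\left(\tfrac{G(n)}{1+c}\right),
\end{align}
which tends to $0$ by hypothesis~(\ref{disconnected}); the same hypothesis forces $K(G(n)/(1+c))\to 0$ and hence, by monotonicity, $G(n)\to\infty$.

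Next I would evaluate the quotient distance. Both configurations are centered, so the optimal translation $Q$ is zero and the infimum over rotations reduces to aligning the cluster axis, yielding $\inf_{T\in\mathcal{T}_d,\,Q\in\mathcal{Q}_n}\tfrac1n\|Z^{(1)}T - Q - Z^{(2)}\|_F^2 = (L_1-L_2)^2/4 = \Theta(G(n)^2)$, which is bounded away from $0$ and in fact diverges. I would then close the argument by contradiction. Suppose $\hat{Z}$ achieves the learnability limit for the whole class. Consistency under $\mathbb{P}^{(1)}_n$ makes the event $A_n = \{\tfrac1n\inf_{T,Q}\|\hat{Z}T - Q - Z^{(1)}\|_F^2 < \eta\}$ have probability tending to $1$ under $\mathbb{P}^{(1)}_n$; transferring $A_n$ across the vanishing total variation distance shows $\mathbb{P}^{(2)}_n(A_n)\to 1$ as well, while consistency under $\mathbb{P}^{(2)}_n$ gives $\mathbb{P}^{(2)}_n(B_n)\to 1$ for the analogous event $B_n$ defined with $Z^{(2)}$. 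Choosing $\eta$ below a quarter of the (bounded-below) squared quotient distance and invoking the triangle inequality for the Procrustes distance makes $A_n$ and $B_n$ disjoint for all large $n$, contradicting $\mathbb{P}^{(2)}_n(A_n\cap B_n)\to 1$.

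The main obstacle is the concluding transfer step, which rests on two facts: that the alignment-invariant map $(Z,Z')\mapsto\sqrt{\inf_{T,Q}\tfrac1n\|ZT - Q - Z'\|_F^2}$ is a genuine metric on the quotient of $\mathbb{R}^{n\times d}$ by rigid motions (so that closeness of $\hat{Z}$ to both endpoints forces the endpoints to be mutually close), and that the lower bound on this distance survives the joint infimum over all rotations and translations. The explicit two-cluster computation settles the latter, and the Procrustes/quotient-metric viewpoint settles the former; everything else is a routine union bound and the standard coupling bound on total variation for products of Bernoullis.
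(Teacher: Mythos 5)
Your proof is correct, and while it shares the paper's basic strategy---a two-point indistinguishability argument built on two-cluster configurations whose cross-cluster distances differ by a factor tied to $c$---the technical execution is genuinely different. The paper invokes LeCam's lemma, bounding the Kullback--Leibler divergence between the two graph laws and reading off a minimax lower bound on expected risk; you instead bound total variation directly, using subadditivity of $d_{TV}$ over the independent cross-cluster dyads together with $d_{TV}(\mathrm{Bern}(p),\mathrm{Bern}(q)) = |p-q|$, and then close with an event-transfer contradiction that works directly with the in-probability definition of learnability. Your route buys something concrete: your bound
\begin{align*}
d_{TV}\bigl(\mathbb{P}^{(1)}_n,\mathbb{P}^{(2)}_n\bigr) \leq \tfrac14 n^2\bigl|K(L_1)-K(L_2)\bigr| \leq \tfrac14 n^2 K\!\left(\tfrac{G(n)}{1+c}\right)
\end{align*}
vanishes under hypothesis~(\ref{disconnected}) alone, whereas the paper's KL bound carries the extra factor $\log\bigl(K(g(n))/K((1+c)g(n))\bigr)$, which the stated hypothesis does not by itself control (for a link function whose logarithm drops sufficiently abruptly between $g(n)$ and $(1+c)g(n)$, that product need not vanish even though $n^2K(g(n))\to 0$); so your argument is both more elementary and slightly more robust than the paper's. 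Two small points you should make explicit if you write this up: (i) the one-line proof that the alignment-invariant Procrustes map is a pseudometric (symmetry and the triangle inequality follow from orthogonal invariance of the Frobenius norm and closure of $\mathcal{Q}_n$ under right-multiplication by elements of $\mathcal{T}_d$), since the disjointness of $A_n$ and $B_n$ rests on it; and (ii) the verification that no rotation or translation can beat the identity alignment of the two collinear configurations, which is a short computation showing the infimum equals $(L_2-L_1)^2/4$ exactly (for even $n$; the odd case changes nothing asymptotically).
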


\begin{proof}
Recall LeCam's theorem \citep{Tsybakov-intro}, in the form it is used to determine minimax estimation rates: 
\begin{lemma}\label{leCam}
Let $\mathcal{P}$ be a set of distributions parameterized by $\theta \in \Theta$. Let $\hat{\Theta}$ denote the class of possible estimators for $\theta \in \Theta$. For any pair $P_{\theta_1}, P_{\theta_2} \in \mathcal{P}$, 
\begin{align}
\inf_{\hat{\theta}\in \hat{\Theta}} \sup_{\theta \in \Theta} \mathbb{E}_{P_{\theta}}(d(\hat{\theta}, \theta)) \geq \frac{\Delta}{8}\exp{(-KL(P_{\theta_1}, P_{\theta_2}))},
\end{align}
where $\Delta = d(\theta_1, \theta_2)$ for some distance $d(\cdot, \cdot)$, and KL denotes the Kullback-Leibler divergence \citep{Cover-and-Thomas-2nd}.
\end{lemma}
Let $\Theta$ be the set of possible latent positions and $\mathcal{P}$ be the distributions over graphs implied by a regular LPM with link probability function $K$. Without loss of generality, consider the latent space to be $S = \mathbb{R}^1$. We require that the latent positions $Z_1, \ldots, Z_n \in \mathbb{R}$ be such $|Z_i| \leq G(n)$ for some differentiable and non-decreasing function $G(n)$. Suppose $G(n) = (1+c) g(n)$ for some non-decreasing differentiable function $g$, and let $c > 0$ be a small constant.

 To get a decent lower bound for this setting via LeCam, we choose two candidate sets of positions $\theta_1, \theta_2 \in \Theta$ corresponding to probability models that do not differ much in KL-divergence, but have embeddings differing by a non-shrinking amount in $n$. To accomplish this, we exploit the fact that the larger the distance between two nodes, the smaller the change in the connection probability (and thus the KL divergence) due to a small perturbation in the distance. 

Consider $\theta^n_1 \in \mathbb{R}^n$ according to $\theta^n_1 = (0, g(n), 0, g(n), \ldots)$. That is, every odd-indexed latent position is 0, and every even-indexed latent position is $g(n)$. Similarly, define $\theta^n_1 \in \mathbb{R}^n$ according to $\theta^n_1 = (0, G(n), 0, G(n), \ldots)$. Let the distance metric on $\Theta^2$ follow from the definition of learnable latent positions. That is,
\begin{align}
d(\theta^n_1, \theta^n_2) &= \inf_{O \in \mathcal{O}_1, Q \in \mathcal{Q}_{n1}}\frac{\|\theta^n_1T- Q -\theta^n_2\|_F^2}{n}\\
                                         &= \frac{c}{2}.
\end{align}
Here, $\mathcal{O}_1$ and $\mathcal{Q}_{n1}$ capture all possible isometric transformations. Notice that this distance is constant in $n$. However,
\begin{align}
KL(P_{\theta_2^n}, \theta_1^n) &\leq \frac{n^2}{2} K(g(n)) \log\left(\frac{K(g(n))}{K((1 + c)g(n))} \right) \label{ineq}\\
& \rightarrow 0
\end{align}
as $n$ goes to infinity due to the assumption in (\ref{disconnected}). Thus, by Lemma~\ref{leCam}, this class of LPMs is not learnable.

\end{proof}

\begin{table}[h]
\begin{tabular}{|l|l|l|l|l|l|}
\hline
$K(x)=$               & $K'(x)=$                           & $\frac{|K'(x)|}{|x|K(x) \epsilon}=$     & $\frac{x^2|K(x)|}{K'(x)^2}=$      & $\alpha_n^K \sim$ & $\beta_n^K \sim$          \\ \hline
$\frac{1}{1+e^x}$     & $\frac{-e^x}{(1+e^x)^2}$           & $\frac{e^x}{\epsilon x (1+e^x)}$        & $\frac{(1+e^x)^3x^2}{e^{2x}}$     & $\infty$          & $\Theta(e^{G(n)} G(n)^2)$ \\ \hline
$\frac{1}{1+e^{x^2}}$ & $\frac{-2xe^{x^2}}{(1+e^{x^2})^2}$ & $\frac{2e^{x^2}}{\epsilon(1+e^{x^2})} $ & $\frac{(1+e^{x^2})^3}{4e^{2x^2}}$ & $\Theta(1)$       & $\Theta(e^{G(n)^2})$      \\ \hline
$\tau e^{-x^2}$       & $-2x\tau e^{-x^2}$                 & 2                                       & $\frac{e^{x^2}}{4\tau}$           & $\Theta(1)$       & $\Theta(e^{G(n)^2})$      \\ \hline
$\frac{1}{(c+x^2)^q}$ & $\frac{-2qx}{(c+x^2)^{q+1}}$       & $\frac{2q}{c + x^2}$                    & $\frac{(c + x^2)^{q+2}}{4q^2}$    & $\Theta(1)$       & $\Theta(G(n)^{2q+4})$     \\ \hline
\end{tabular}
\caption{Values of $\alpha_n^K$ and $\beta_n^K$ for different choices of link function $K(x)$} \label{differentlinks}
\end{table}

\section*{Acknowledgements}
We are grateful to the members of the CMU Networkshop for feedback on our
results and their presentation, and to conversations with Creagh Briercliffe, David Choi, Emily Fox,
Alden Green, Peter Hoff, Jeannette Janssen, Dmitri Krioukov, and Cristopher Moore. 


\bibliography{locusts}
\bibliographystyle{plainnat}

\end{document}